
\documentclass[12pt]{amsart}
\usepackage{amssymb}

\textwidth 6.5truein
\textheight 8.67truein
\oddsidemargin 0truein
\evensidemargin 0truein
\topmargin 0truein

\let\frak\mathfrak
\let\Bbb\mathbb

\def\>{\relax\ifmmode\mskip.666667\thinmuskip\relax\else\kern.111111em\fi}
\def\<{\relax\ifmmode\mskip-.333333\thinmuskip\relax\else\kern-.0555556em\fi}
\def\vsk#1>{\vskip#1\baselineskip}
\def\vv#1>{\vadjust{\vsk#1>}\ignorespaces}
\def\vvn#1>{\vadjust{\nobreak\vsk#1>\nobreak}\ignorespaces}
\def\vvgood{\vadjust{\penalty-500}}
\let\alb\allowbreak

\def\sskip{\par\vskip.2\baselineskip plus .05\baselineskip}
\let\Medskip\medskip
\def\medskip{\par\Medskip}
\let\Bigskip\bigskip
\def\bigskip{\par\Bigskip}

\let\Maketitle\maketitle
\def\maketitle{\hrule height0pt\vskip-\baselineskip
\Maketitle\thispagestyle{empty}\let\maketitle\empty}

\newtheorem{thm}{Theorem}[section]
\newtheorem{cor}[thm]{Corollary}
\newtheorem{lem}[thm]{Lemma}
\newtheorem{prop}[thm]{Proposition}

\numberwithin{equation}{section}

\theoremstyle{definition}
\newtheorem*{rem}{Remark}
\newtheorem*{example}{Example}

\let\mc\mathcal
\let\nc\newcommand

\nc{\on}{\operatorname}
\nc{\Z}{{\mathbb Z}}
\nc{\C}{{\mathbb C}}
\nc{\N}{{\mathbb N}}
\nc{\pone}{{\mathbb C}{\mathbb P}^1}
\nc{\arr}{\rightarrow}
\nc{\larr}{\longrightarrow}
\nc{\al}{\alpha}
\nc{\W}{{\mc W}}
\nc{\la}{\lambda}
\nc{\su}{\widehat{{\mathfrak sl}}_2}
\nc{\g}{{\mathfrak g}}
\nc{\h}{{\mathfrak h}}
\nc{\m}{{\mathfrak m}}
\nc{\n}{{\mathfrak n}}
\nc{\Gm}{\Gamma}
\nc{\La}{\Lambda}
\nc{\gl}{\widehat{\mathfrak{gl}_2}}
\nc{\bi}{\bibitem}
\nc{\om}{\omega}
\nc{\Res}{\on{Res}}
\nc{\gm}{\gamma}
\nc{\Om}{\Omega}

\def\fratop{\genfrac{}{}{0pt}1}
\def\satop#1#2{\fratop{\scriptstyle#1}{\scriptstyle#2}}

\def\Mu{\mathrm M}
\def\z{\mathfrak z}
\def\ev{\mbox{\sl ev}}
\def\ch{\on{ch}}
\def\gr{\on{gr}}
\def\Ann{\on{Ann}}
\def\End{\on{End\>}}

\def\Res{\on{Res}}
\def\rdet{\on{rdet}}
\def\Wr{\on{Wr}}

\def\lab{{\bs{\bar\la}}}

\def\lba{{\bs\la\>,\bs a}}

\def\Llb{{\bs\La,\bs\la\>,\bs b}}
\def\Lbl{{\bs\La,\bs b}}

\def\Wrnb{\Wr_{\bs n,\bs b}}
\def\Wrnbi{\Wrnb^{-1}}

\def\B{{\mc B}}
\def\D{{\mc D}}

\def\L{{\mc L}}
\def\M{{\mc M}}
\def\O{{\mc O}}
\def\Q{{\mc Q}}
\def\V{{\mc V}}
\def\Dt{\Tilde\D}

\def\flati{\def\=##1{\rlap##1\hphantom b)}}

\let\dl\delta
\let\Dl\Delta

\let\si\sigma

\let\Sig\varSigma
\let\Tht\Theta

\let\Tilde\widetilde
\let\der\partial

\let\ge\geqslant
\let\geq\geqslant
\let\le\leqslant
\let\leq\leqslant

\let\bat\bar

\nc{\gln}{\mathfrak{gl}_N}
\nc{\sln}{\mathfrak{sl}_N}

\def\glnt{\gln[t]}
\def\Ugln{U(\gln)}
\def\Uglnt{U(\glnt)}

\def\slnt{\sln[t]}

\def\beq{\begin{equation}}
\def\eeq{\end{equation}}
\def\be{\begin{equation*}}
\def\ee{\end{equation*}}

\nc{\bean}{\begin{eqnarray}}
\nc{\eean}{\end{eqnarray}}
\nc{\bea}{\begin{eqnarray*}}
\nc{\eea}{\end{eqnarray*}}
\nc{\bs}{\boldsymbol}
\nc{\Ref}[1]{{\rm(\ref{#1})}}
\nc{\glN}{\mathfrak{gl}_N}
\nc{\glNt}{\mathfrak{gl}_N[t]}
\nc{\s}{sing}
\nc{\R}{\Bbb R}

\nc{\Oml}{{\Om_{\bs\la}}}
\nc{\OmLb}{{\Om_{\bs\La,\bs\la,\bs b}}}
\nc{\Ol}{{\mc O_{\bs\la}}}
\nc{\OLb}{{\mc O_{\bs\La,\bs\la,\bs b}}}
\nc{\VSl}{{(\V^S)_{\bs\la}}}
\nc{\Bl}{{\B_{\bs\la}}}
\nc{\Ml}{{\mc M_{\bs\la}}}
\nc{\Mlb}{{\mc M_{\bs\La,\bs\la,\bs b}}}
\nc{\Blb}{{\B_{\bs\La,\bs\la,\bs b}}}
\nc{\Omn}{{\Omega_{\bs n,\bs b,\bs K}}}
\nc{\Omlb}{{\bar\Om_{\bs\la}}}
\nc{\ep}{\epsilon}
\def\Dlnb{\Dl_{\bs n,\bs b,\bs K}}
\nc{\Dlb}{\Dl_{\bs\La,\bs\la,\bs b,\bs K}}

\begin{document}

\title[Spaces of quasi-exponentials and representations of {\small$\gln$}]
{Spaces of quasi-exponentials and representations of {\large$\gln$}}

\author[E.\,Mukhin, V.\,Tarasov, and A.\,Varchenko]
{E.\,Mukhin$\>^{*,1}$, V.\,Tarasov$\>^{\star,*,2}$,
and A.\,Varchenko$\>^{\diamond,3}$}

\thanks{${}^1$\ Supported in part by NSF grant DMS-0601005}
\thanks{${}^2$\ Supported in part by RFFI grant 05-01-00922}
\thanks{${}^3$\ Supported in part by NSF grant DMS-0555327}

\maketitle

\begin{center}
{\it $^\star\<$Department of Mathematical Sciences
Indiana University\,--\>Purdue University Indianapolis\\
402 North Blackford St, Indianapolis, IN 46202-3216, USA\/}

\medskip
{\it $^*\<$St.\,Petersburg Branch of Steklov Mathematical Institute\\
Fontanka 27, St.\,Petersburg, 191023, Russia\/}

\medskip
{\it $^\diamond\<$Department of Mathematics, University of North Carolina
at Chapel Hill\\ Chapel Hill, NC 27599-3250, USA\/}
\end{center}

\medskip
\begin{abstract}
We consider the action of the Bethe algebra $\B_{\bs K}$ on
$(\otimes_{s=1}^kL_{\bs\la^{(s)}})_{\bs\la}$, the weight subspace of
weight $\bs\la$ of the tensor product of $k$ polynomial irreducible
$\gln$-modules with highest weights $\bs\la^{(1)},\dots,\bs\la^{(k)}$,
respectively. The Bethe algebra depends on $N$ complex numbers $\bs
K=(K_1,\dots,K_N)$.  Under the assumption that $K_1,\dots,K_N$ are
distinct, we prove that the image of $\B_{\bs K}$ in \
End\,$((\otimes_{s=1}^kL_{\bs\la^{(s)}})_{\bs\la})$ is isomorphic to
the algebra of functions on the intersection of $k$ suitable Schubert
cycles in the Grassmannian of $N$-dimensional spaces of
quasi-exponentials with exponents $\bs K$.  We also prove that the
$\B_{\bs K}$-module $(\otimes_{s=1}^kL_{\bs\la^{(s)}})_{\bs\la}$ is
isomorphic to the coregular representation of that algebra of
functions.  We present a Bethe ansatz construction identifying the
eigenvectors of the Bethe algebra with points of that intersection of
Schubert cycles.
\end{abstract}

\section{Introduction}
It has been proved recently in \cite{MTV6} that the eigenvectors of the Bethe
algebra of the $\glN$ Gaudin model are in a bijective correspondence with
$N$-th order Fuchsian differential operators with polynomial kernel and
prescribed singularities. In this paper we construct a variant of this correspondence.

The Bethe algebra considered in \cite{T}, \cite{MTV6} admits a deformation $\B_{\bs K}$
depending on $N$ complex parameters $\bs K=(K_1,\dots,K_N)$, see \cite{CT},\cite{MTV1}.
Under the assumption that $K_1,\dots,K_N$ are distinct, 
we consider the Bethe algebra $\B_{\bs K}$ acting on 
$(\otimes_{s=1}^kL_{\bs\la^{(s)}})_{\bs\la}$, the weight subspace of weight
$\bs\la$ of the tensor product
of  $k$ polynomial irreducible $\gln$-modules with highest weights 
$\bs\la^{(1)},\dots,\bs\la^{(k)}$, respectively. 
We prove that the image of $\B_{\bs K}$ in
\ End\,$((\otimes_{s=1}^kL_{\bs\la^{(s)}})_{\bs\la})$
is isomorphic to 
the algebra of functions on the intersection of
$k$ suitable Schubert cycles in the Grassmannian of $N$-dimensional 
spaces of quasi-exponentials with exponents $\bs K$.
We prove that the $\B_{\bs K}$-module
$(\otimes_{s=1}^kL_{\bs\la^{(s)}})_{\bs\la}$ is isomorphic to the coregular
representation of that algebra of functions.

We present a Bethe ansatz construction identifying the eigenvectors of the Bethe algebra
$\B_{\bs K}$ with points of that intersection of Schubert cycles, cf. \cite{MTV7}.

Thus, we show that the eigenvectors of $\B_{\bs K}$ are in a bijective
correspondence with suitable $N$-th order differential operators with
quasi-exponential kernel and prescribed singularities.  

This
correspondence reduces the multidimensional problem of the
diagonalization of the Bethe algebra action  to the
one-dimensional problem of finding the corresponding differential
operators.

A separation of variables in a quantum integrable
model is a reduction of a multidimensional spectral problem to a
suitable one-dimensional problem, see for example Sklyanin's
separation of variables in the $\frak{gl}_2$ Gaudin model.  In that
respect, our correspondence can be viewed as ``a separation of
variables'' in the $\gln$ Gaudin model associated with
$\B_{\bs K}$, cf. \cite{MTV7}.

The results of this paper and of \cite{MTV6} are in the spirit of the $\gln$ geometric 
Langlands correspondence, which, in particular, relates suitable
commutative algebras of linear
operators acting on $\gln$-modules with properties of  schemes of suitable
$N$-th order differential operators.

\medskip

The paper is organized as follows. In Section~\ref{alg sec}, we discuss
representations of the current algebra $\glnt$, in particular, Weyl modules.
We introduce the Bethe algebra as a subalgebra of $U(\glnt)$ in
Section~\ref{bethesec}.
In Section \ref{Spaces of quasi-exponentials and Wronski map} we introduce the
space of quasi-exponentials and discuss properties of the algebra of functions
 on that space.

 In Section~\ref{schubert}, we introduce a collection of (Schubert) subvarieties in the
affine space $\Omega_{\bs\la}$ of collections
of $N$ quasi-exponentials and consider the algebra of functions on the intersection
of the subvarieties.
We prove the main results of the paper, Theorems~\ref{first}, \ref{first1},
\ref{second}, and ~\ref{third} in Section~\ref{iso sec}. 
Section~\ref{app sec} describes applications.

\medskip

The results of this paper are related to the results on
the $\gln$-opers with an irregular singularity in the recent paper
\cite{FFR}.

\subsection*{Acknowledgments}
We thank D.\,Arinkin for useful discussions and  L.\,Rybnikov for
sending us preprint \cite{FFR}.

\section{Representations of current algebra $\glnt$}
\label{alg sec}
\subsection{Lie algebra $\gln$}
Let $e_{ij}$, $i,j=1,\dots,N$, be the standard generators of the Lie algebra
$\gln$ satisfying the relations
$[e_{ij},e_{sk}]=\dl_{js}e_{ik}-\dl_{ik}e_{sj}$. We identify the Lie algebra
$\sln$ with the subalgebra in $\gln$ generated by the elements
$e_{ii}-e_{jj}$ and $e_{ij}$ for $i\ne j$, $i,j=1,\dots,N$.
We denote by $\h\subset \glN$ the subalgebra generated by
$e_{ii},\,i=1,\dots,N$.

The subalgebra $\z_N\subset\gln$ generated by the element
$\sum_{i=1}^Ne_{ii}\,$ is central. The Lie algebra $\gln$ is canonically
isomorphic to the direct sum $\sln\oplus\z_N$.

Given an $N\times N$ matrix $A$ with possibly noncommuting entries $a_{ij}$,
we define its {\it row determinant\/} to be
\beq
\rdet A\,=
\sum_{\;\si\in S_N\!} (-1)^\si\,a_{1\si(1)}a_{2\si(2)}\dots a_{N\si(N)}\,.
\eeq

Let $Z(x)$ be the following polynomial in a variable $x$ with coefficients
in $\Ugln$:
\vvn.3>
\beq
\label{Zx}
Z(x)\,=\,\rdet\left( \begin{matrix}
x-e_{11} & -\>e_{21}& \dots & -\>e_{N1}\\
-\>e_{12} &x+1-e_{22}& \dots & -\>e_{N2}\\
\dots & \dots &\dots &\dots \\
-\>e_{1N} & -\>e_{2N}& \dots & x+N-1-e_{NN}
\end{matrix}\right).
\vv.3>
\eeq
The next statement was proved in \cite{HU}, see also \cite[Section~2.11]{MNO}.

\begin{thm}
\label{Zcent}
The coefficients of the polynomial\/ $\,Z(x)-x^N$ are free generators
of the center of\/ $\Ugln$.
\qed
\end{thm}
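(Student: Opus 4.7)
The plan is to split the theorem into three sub-claims: (a) the coefficients of $Z(x)$ commute with all $e_{ab}$, so they lie in the center; (b) they are algebraically independent; (c) together with the Harish--Chandra theorem (which asserts that the center of $\Ugln$ is a polynomial algebra in $N$ variables), (a) and (b) force these $N$ elements to freely generate $Z(\Ugln)$. This reduction is standard once (a) and (b) are in hand.

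For centrality, I would compute $[e_{ab},Z(x)]$ directly. The row determinant in \Ref{Zx} has the striking feature that its diagonal entries are shifted by $x,\>x+1,\dots,x+N-1$ respectively, and this precise arithmetic progression is what makes the commutator telescope. Concretely, one expands the row determinant by row $a$ and by row $b$, then applies the identity $[e_{ab},e_{ij}]=\dl_{bi}e_{aj}-\dl_{ja}e_{ib}$, and checks that the shift $+\>(a-1)-(b-1)=a-b$ on the diagonal cancels against the linear-algebra contributions coming from row and column swaps. An essentially equivalent but more conceptual route is to recognize $Z(x)$ as the image of the Yangian quantum determinant $\on{qdet} T(u)$ of $Y(\gln)$ under the evaluation homomorphism $T(u)\mapsto 1+u^{-1}E$ (where $E=(e_{ij})$), followed by the substitution $u=x$; the centrality of the coefficients of $\on{qdet} T(u)$ in $Y(\gln)$ is standard, and commutes with the evaluation map.

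For algebraic independence, pass to the associated graded algebra $\gr\Ugln\cong S(\gln)=\C[e_{ij}]$. The top-degree symbol of the coefficient of $x^{N-k}$ in $Z(x)$ equals the coefficient of $x^{N-k}$ in the ordinary characteristic polynomial $\det(xI-E^{\mathrm t})$, which is $(-1)^k$ times the $k$-th elementary symmetric polynomial in the eigenvalues of the matrix $E=(e_{ij})$ regarded as a function on $\gln^*\cong\gln$. By the Chevalley restriction theorem, these $N$ invariant functions freely generate the ring $S(\gln)^{GL_N}$, so in particular they are algebraically independent in $S(\gln)$. Algebraic independence of symbols implies algebraic independence of the original central elements.

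The hard part is genuinely step (a): the explicit commutator computation. The shifts $x,\>x+1,\dots,x+N-1$ on the diagonal must be inserted in exactly the right way for the mixed double-expansion terms to cancel, and without a good organizing principle (such as the Yangian $RTT$ relations, which encode exactly this cancellation universally) the bookkeeping is unpleasant. In practice I would invoke the Yangian formalism, since it packages the combinatorics cleanly and avoids the ad hoc telescoping, leaving only the comparatively routine verification that the evaluation homomorphism sends $\on{qdet} T(u)$ to the prescribed $Z(x)$.
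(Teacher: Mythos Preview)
The paper does not give a proof of this theorem at all: it is stated with a \qed box immediately after, and the preceding sentence simply attributes the result to \cite{HU} with a pointer to \cite[Section~2.11]{MNO}. So there is nothing to compare against except the literature cited, and your sketch is precisely the standard argument one finds there --- centrality via the Capelli/Yangian machinery, algebraic independence by passing to symbols and invoking Chevalley restriction.

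One point in your write-up needs tightening. The implication you state in (c) is false as written: having $N$ algebraically independent elements in a polynomial algebra on $N$ generators does \emph{not} force them to generate (take $y^2$ inside $\C[y]$). What actually closes the argument is the stronger fact you already recorded in your paragraph on (b), namely that the \emph{symbols} of your elements freely generate $S(\gln)^{GL_N}$. Combine this with the easy inclusion $\gr Z(\Ugln)\subset S(\gln)^{\gln}$ coming from the PBW filtration: since your symbols already span all of $S(\gln)^{\gln}$, this inclusion is an equality, and then the standard filtered-to-graded lifting lemma (if the symbols of $z_1,\dots,z_N$ generate $\gr A$, then $z_1,\dots,z_N$ generate $A$) gives generation of $Z(\Ugln)$. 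Freeness then follows from the algebraic independence you proved. So all the ingredients are in your sketch; only the summary sentence in (c) misstates how they fit together. Invoking the Harish--Chandra isomorphism merely to say ``the center is a polynomial ring in $N$ variables'' is both weaker than what you need and stronger than what you have to assume, and risks circularity since one common proof of that fact goes through exactly these Capelli generators.
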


Let $M$ be a $\gln$-module. A vector $v\in M$ has weight
$\bs\la=(\la_1,\dots,\la_N)\in\C^N$ if $e_{ii}v=\la_iv$ for $i=1,\dots,N$.
A vector $v$ is called {\it singular\/} if $e_{ij}v=0$ for $1\le i<j\le N$.
If $v$ is a singular of weight $\bs\la$, then
\vvn-.5>
\beq
\label{Zxv}
Z(x)\,v\,=\,\prod_{i=1}^N\,(x-\la_i+i-1)\cdot v\,.
\eeq

We denote by $(M)_{\bs\la}$ the subspace of $M$ of weight $\bs\la$,
by $(M)^{sing}$ the subspace of $M$ of all singular vectors and by
$(M)^{sing}_{\bs\la}$ the subspace of $M$ of all singular vectors
of weight $\bs\la$.

Denote by $L_{\bs\la}$ the irreducible finite-dimensional $\gln$-module with
highest weight $\bs\la$. Any finite-dimensional $\gln$-module $M$ is isomorphic
to the direct sum $\bigoplus_{\bs\la}L_{\bs\la}\otimes(M)_{\bs\la}^{sing}$,
where the spaces $(M)_{\bs\la}^{sing}$ are considered as trivial
$\gln$-modules.

The $\gln$-module $L_{(1,0,\dots,0)}$ is the standard $N$-dimensional vector
representation of $\gln$. We denote it by $V$. We choose a highest weight
vector in $V$ and denote it by $v_+$.

A $\gln$-module $M$ is called polynomial if it is isomorphic to a submodule of
$V^{\otimes n}$ for some $n$.

A sequence of integers $\bs\la=(\la_1,\dots,\la_N)$ such that
$\la_1\ge\la_2\ge\dots\ge\la_N\ge0$ is called a {\it partition with at most
$N$ parts\/}. Set $|\bs\la|=\sum_{i=1}^N\la_i$. Then it is said that $\bs\la$
is a partition of $|\bs\la|$.

\sskip
The $\gln$-module $V^{\otimes n}$ contains the module $L_{\bs\la}$
if and only if $\bs\la$ is a partition of $n$ with at most $N$ parts.

\sskip
For a Lie algebra $\g\,$, we denote by $U(\g)$ the universal enveloping algebra
of $\g$.

\subsection{Current algebra $\glnt$}
Let $\glnt=\gln\otimes\C[t]$ be the Lie algebra of $\gln$-valued polynomials
with the pointwise commutator. We call it the {\it current algebra\/}.
We identify the Lie algebra $\gln$ with the subalgebra $\gln\otimes1$
of constant polynomials in $\glnt$. Hence, any $\glnt$-module has the canonical
structure of a $\gln$-module.

The standard generators of $\glnt$ are $e_{ij}\otimes t^r$, $i,j=1,\dots,N$,
$r\in\Z_{\ge0}$. They satisfy the relations
$[e_{ij}\otimes t^r,e_{sk}\otimes t^p]=
\dl_{js}e_{ik}\otimes t^{r+p}-\dl_{ik}e_{sj}\otimes t^{r+p}$.

The subalgebra $\z_N[t]\subset\glnt$ generated by the elements
$\sum_{i=1}^Ne_{ii}\otimes t^r$, $r\in\Z_{\ge0}$, is central.
The Lie algebra $\glnt$ is canonically isomorphic to the direct sum
$\slnt\oplus\z_N[t]$.

It is convenient to collect elements of $\glnt$ in generating series
of a  variable $u$. For $g\in\gln$, set
\vvn-.1>
\be
g(u)=\sum_{s=0}^\infty (g\otimes t^s)u^{-s-1}.
\vv.4>
\ee

For each $a\in\C$, there exists an automorphism $\rho_a$ of $\glnt$,
\;$\rho_a:g(u)\mapsto g(u-a)$. Given a $\glnt$-module $M$, we denote by $M(a)$
the pull-back of $M$ through the automorphism $\rho_a$. As $\gln$-modules,
$M$ and $M(a)$ are isomorphic by the identity map.

For any $\glnt$-modules $L,M$ and any $a\in\C$, the identity map
$(L\otimes M)(a)\to L(a)\otimes M(a)$ is an isomorphism of $\glnt$-modules.

We have the evaluation homomorphism,
${\ev:\glnt\to\gln}$, \;${\ev:g(u) \mapsto g\>u^{-1}}$.
Its restriction to the subalgebra $\gln\subset\glnt$ is the identity map.
For any $\gln$-module $M$, we denote by the same letter the $\glnt$-module,
obtained by pulling $M$ back through the evaluation homomorphism. For each
$a\in\C$, the $\glnt$-module $M(a)$ is called an {\it evaluation module\/}.

If $b_1,\dots,b_n$ are distinct complex numbers and $L_1,\dots,L_n$ are
irreducible finite-dimensional $\gln$-modules, then the $\glnt$-module
$\otimes_{s=1}^n L_s(b_s)$ is irreducible.

We have a natural $\Z_{\ge0}$-grading on $\glnt$ such that for any $g\in\gln$,
the degree of $g\otimes t^r$ equals $r$. We set the degree of $u$ to be $1$.
Then the series $g(u)$ is homogeneous of degree $-1$.

A $\glnt$-module is called {\it graded\/} if it
has a $\Z_{\ge0}$-grading compatible with the grading of $\glnt$.
Any irreducible graded $\glnt$-module is isomorphic to an evaluation module
$L(0)$ for some irreducible $\gln$-module $L$, see \cite{CG}.

Let $M$ be a $\Z_{\ge0}$-graded space with finite-dimensional homogeneous
components. Let $M_j\subset M$ be the homogeneous component of degree $j$.
We call the formal power series in a variable $q$,
\vvn-.2>
\be
\ch_M(q)\,=\,\sum_{j=0}^\infty\,(\dim M_j)\,q^j\,,
\vv-.2>
\ee
the {\it graded character\/} of $M$.

\subsection{Weyl modules}
\label{secweyl}
Let $W_m$ be the $\glnt$-module generated by a vector $v_m$ with the
defining relations:
\vvn-.2>
\begin{alignat*}2
e_{ii} &(u)v_m=\>\dl_{1i}\,\frac mu\,v_m\,, && i=1,\dots,N\,,
\\[4pt]
e_{ij} & (u)v_m=\>0\,, && 1\le i<j\le N\,,
\\[4pt]
(e_{ji} &{}\otimes1)^{m\dl_{1i}+1}v_m=\>0\,,\qquad && 1\le i<j\le N\,.
\\[-12pt]
\end{alignat*}
As an $\slnt$-module, the module $W_m$ is isomorphic to the Weyl module from
\cite{CL}, \cite{CP}, corresponding to the weight $m\om_1$, where $\om_1$ is
the first fundamental weight of $\sln$. Note that $W_1=V(0)$.

\begin{lem}
\label{weyl}
The module $W_m$ has the following properties.
\begin{enumerate}
\item[(i)] The module \>$W_m$ has a unique grading such that\/ \>$W_m$ is
a graded $\glnt$-module and the degree of\/ $v_m$ equals\/ $0$.
\item[(ii)] As a $\gln$-module, $W_m$ is isomorphic to $V^{\otimes m}$.
\item[(iii)] A $\glnt$-module $M$ is an irreducible subquotient of\/ $W_m$
if and only if\/ $M$ has the form $L_{\bs\la}(0)$,
where\/ $\bs\la$ is a partition of\/ $m$ with at most $N$ parts.
\end{enumerate}
\end{lem}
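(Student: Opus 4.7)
The plan is to address the three parts of Lemma~\ref{weyl} in turn, with part (ii) carrying the main content and parts (i) and (iii) following from structural observations.

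For part (i), I would observe that each defining relation of $W_m$ is homogeneous for the $\Z_{\ge 0}$-grading of $\glnt$ once $v_m$ is assigned degree zero. Expanding $e_{ii}(u)v_m=\dl_{1i}(m/u)v_m$ in powers of $u^{-1}$ yields $(e_{ii}\otimes t^s)v_m=\dl_{1i}\dl_{s,0}\>m\>v_m$, which matches degree $s$ on both sides; the relations $(e_{ij}\otimes t^s)v_m=0$ for $i<j$ and the Serre-type relation $(e_{ji}\otimes 1)^{m\dl_{1i}+1}v_m=0$ are homogeneous for the same reason. Hence the $\Z_{\ge 0}$-grading of $U(\glnt)$ descends to the quotient defining $W_m$, and uniqueness is forced because $v_m$ cyclically generates $W_m$.

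For part (ii), I would first reduce to the $\slnt$-structure. Restricted to $\slnt$, and using the scalar action $(\sum_i e_{ii}\otimes t^s)v_m=m\dl_{s,0}\>v_m$ to account for the generators of the centre $\z_N[t]$, the relations defining $W_m$ become precisely those of the local Weyl module of $\slnt$ of highest weight $m\om_1$ studied in \cite{CL,CP}. Their work identifies that module with $V^{\otimes m}$ as an $\sln$-module. To upgrade this to an isomorphism of $\gln$-modules, I would observe that the central element $\sum_i e_{ii}\in\z_N$ acts on the cyclic vector $v_m$ by the scalar $m$, hence on the whole of $W_m$ by $m$, which matches the $\gln$-action on $V^{\otimes m}$, where every weight vector has coordinate sum $m$.

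For part (iii), I would use the grading from part (i) to build a Jordan--H\"older series of $W_m$ by graded $\glnt$-submodules; the successive quotients are then graded irreducible $\glnt$-modules, and by \cite{CG} each such is of the form $L(0)$ for an irreducible $\gln$-module $L$. Summing the underlying $\gln$-modules of all subquotients (with multiplicities) recovers $W_m$ as a $\gln$-module, which by part (ii) is $V^{\otimes m}$. The Schur--Weyl decomposition of $V^{\otimes m}$ into $\gln$-irreducibles, containing each $L_{\bs\la}$ with $\bs\la$ a partition of $m$ of at most $N$ parts (each with positive multiplicity) and no others, then identifies exactly which $L_{\bs\la}(0)$ arise as subquotients, establishing both implications.

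The main obstacle is part (ii), specifically verifying that the $\slnt$-restriction of $W_m$ coincides with the Chari--Pressley/Chari--Loktev local Weyl module of highest weight $m\om_1$ and then importing the known identification of that module with $V^{\otimes m}$. Once this is in hand, the scalar central action provides a painless $\gln$-upgrade, and part (iii) falls out by combining the grading structure with Schur--Weyl.
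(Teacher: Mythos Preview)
Your proposal is correct and follows the same route as the paper. The paper's own proof simply cites \cite{CP} for parts (i) and (ii) and says that (iii) follows from the first two; you have supplied the details that the paper omits --- the homogeneity argument for (i), the $\slnt$/$\gln$ reduction for (ii), and the graded Jordan--H\"older plus Schur--Weyl argument for (iii), the last of which uses the \cite{CG} fact on graded irreducibles already recorded in the paper.
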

\begin{proof}
The first two properties are proved in \cite{CP}. The third property follows
from the first two. 
\vvgood
\end{proof}

For each $b\in\C$, the $\glnt$-module $W_m(b)$ is cyclic with a cyclic vector
$v_m$.

\begin{lem}[\cite{MTV6}]
\label{weylb}
The module $W_m(b)$ has the following properties.
\begin{enumerate}
\item[(i)] As a $\gln$-module, $W_m(b)$ is isomorphic to $V^{\otimes m}$.
\item[(ii)] A $\glnt$-module $M$ is an irreducible subquotient of\/ $W_m(b)$
if and only if\/ $M$ has the form $L_{\bs\la}(b)$,
where\/ $\bs\la$ is a partition of\/ $m$ with\/ $N$ parts.
\item[(iii)] For any natural numbers $n_1,\dots,n_k$ and distinct complex
numbers $b_1,\dots,b_k$, the $\glnt$-module $\otimes_{s=1}^k W_{n_s}(b_s)$ is
cyclic with a cyclic vector $\otimes_{s=1}^kv_{n_s}$.
\item[(iv)] Let\/ $M$ be a cyclic finite-dimensional $\glnt$-module with
a cyclic vector $v$ satisfying $e_{ij}(u)v=0$ for $1\le i<j\le N$, and
$e_{ii}(u)v=\dl_{1i}(\sum_{s=1}^k n_s/(u-b_s))v$ for $i=1,\dots,N$.
Then there exists a surjective $\glnt$-module
homomorphism  
$\otimes_{s=1}^kW_{n_s}(b_s) \,\to \,M$ sending\/ $\otimes_{s=1}^kv_{n_s}$ to $v$.
\end{enumerate}
\qed
\end{lem}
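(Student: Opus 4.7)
Parts (i) and (ii) follow essentially at once from Lemma~\ref{weyl}. Since the automorphism $\rho_b$ of $\gln[t]$ restricts to the identity on $\gln\otimes 1$, pullback by $\rho_b$ preserves the $\gln$-module structure, giving (i) from Lemma~\ref{weyl}(ii). For (ii), a direct computation shows $L_{\bs\la}(0)(b)\cong L_{\bs\la}(b)$ as $\gln[t]$-modules (the element $g\otimes t^r$ acts as $b^r g$ on the left-hand side), so (ii) follows from Lemma~\ref{weyl}(iii) together with the commutativity of pullback with passing to irreducible subquotients.

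Part (iii) is the substantive step. The plan is to induct on $k$, the base $k=1$ being tautological. The core of the inductive step is a \emph{localization principle} exploiting the distinctness of the $b_s$: because each $W_{n_{s'}}$ is finite-dimensional, the image of $\gln[t]$ in $\End W_{n_{s'}}$ is finite-dimensional, and consequently there exists an integer $R=R(n_{s'})$ such that $\gln\otimes(t-b_{s'})^R$ annihilates the \emph{entire} module $W_{n_{s'}}(b_{s'})$. Hermite interpolation then provides polynomials $f\in\C[t]$ that vanish to order $\ge R$ at every $b_{s'}$ with $s'\ne s$ while making $f(t+b_s)$ prescribed arbitrarily. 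Acting on $\otimes_s v_{n_s}$ by $g\otimes f(t)$ isolates the action to the $s$-th slot; combined with cyclicity of $W_{n_s}(b_s)$, this places every vector $v_{n_1}\otimes\cdots\otimes w\otimes\cdots\otimes v_{n_k}$, $w\in W_{n_s}(b_s)$, inside $U(\gln[t])\cdot(\otimes_s v_{n_s})$. Iterating over the remaining slots generates the entire tensor product. The main technical obstacle is precisely this iteration: once one has modified the $s$-th slot to some $x_s\ne v_{n_s}$, localizing to a \emph{different} slot still requires the action on $x_s$ to vanish when $f$ vanishes to order $\ge R$ at $b_s$. That is why the uniform annihilation on all of $W_{n_s}(b_s)$, rather than on the cyclic vector alone, is essential; it follows from the finite-dimensionality of $W_{n_s}$.

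For (iv), given (iii), $\otimes_s W_{n_s}(b_s)$ is a finite-dimensional cyclic $\gln[t]$-module whose cyclic vector $\otimes v_{n_s}$ satisfies exactly the generating-series conditions postulated for $v$. To construct the desired surjection one defines $x\cdot(\otimes v_{n_s})\mapsto xv$ and checks that the left annihilator of $\otimes v_{n_s}$ in $U(\gln[t])$ is contained in the left annihilator of $v$. Equivalently, one invokes the universal property that $\otimes_s W_{n_s}(b_s)$ is the maximal finite-dimensional cyclic $\gln[t]$-module with a cyclic vector subject to these two generating-series conditions---a standard consequence of the theory of local and global Weyl modules, with all extra relations on $\otimes v_{n_s}$ (Serre-type ones such as $(e_{j1})^{|\bs n|+1}(\otimes v_{n_s})=0$) forced by finite-dimensionality and therefore automatically satisfied by $v$ as well. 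Surjectivity of the resulting map is then immediate from the cyclicity of $M$.
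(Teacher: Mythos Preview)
The paper does not prove this lemma; it is imported from \cite{MTV6} and closed with \qed, so there is no argument here to compare against. Your outline is correct and is essentially the standard proof one finds in the Weyl-module literature.

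Parts (i)--(iii) are fine. For (iii) you have correctly isolated the one nontrivial point: the annihilation $\bigl(\gln\otimes(t-b_{s})^{R}\bigr)\cdot W_{n_{s}}(b_{s})=0$ must hold on the entire module, not just on the cyclic vector, so that the Chinese-remainder localization survives once earlier tensor slots have already been modified. This follows from finite-dimensionality (equivalently, bounded grading) of $W_{n_s}$, exactly as you say.

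For (iv), be aware that the phrase ``all extra relations \dots\ forced by finite-dimensionality'' is hiding a genuine theorem, not a triviality. What finite-dimensionality of $M$ gives for free is that the $\gln$-singular vector $v$ of weight $(|\bs n|,0,\dots,0)$ satisfies the Serre relations $(e_{j1}\otimes1)^{|\bs n|+1}v=0$. What is \emph{not} automatic is that the left ideal in $U(\glnt)$ generated by the highest-$\ell$-weight conditions together with these Serre relations already coincides with the full annihilator of $\otimes_s v_{n_s}$ --- equivalently, that the module presented by those relations has dimension exactly $N^{|\bs n|}$. For $k=1$ this is the definition of $W_m$; for $k>1$ it is a theorem (due essentially to \cite{CP}). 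Your appeal to ``the theory of local and global Weyl modules'' is therefore load-bearing, which is acceptable here since the paper itself simply defers to \cite{MTV6}.
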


Given sequences $\bs n=(n_1,\dots,n_k)$ of natural numbers and
$\bs b=(b_1,\dots,b_k)$ of distinct complex numbers, we call the $\glnt$-module
$\otimes_{s=1}^k W_{n_s}(b_s)$ the {\it Weyl module associated with\/ $\bs n$
and\/ $\bs b$}.

\begin{cor}[\cite{MTV6}]
\label{weylbk}
A $\glnt$-module $M$ is an irreducible subquotient of\/
$\otimes_{s=1}^k W_{n_s}(b_s)$ if and only if\/ $M$ has the form
$\otimes_{s=1}^kL_{\bs\la^{(s)}}(b_s)$, where
$\bs\la^{(1)},\dots,\bs\la^{(k)}$ are partitions with at most $N$ parts
such that $|\bs\la^{(s)}|=n_s$, \,$s=1,\dots,k$.
\qed
\end{cor}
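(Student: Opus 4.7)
The plan is to combine Lemma~\ref{weylb}, parts (i) and (ii), with the irreducibility of tensor products of evaluation modules at distinct points (recalled in the second subsection of this section), organized by the Jordan--H\"older theorem.

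First, for each $s=1,\dots,k$ I would fix a composition series $0=W^{(s)}_0\subset W^{(s)}_1\subset\dots\subset W^{(s)}_{l_s}=W_{n_s}(b_s)$ of the Weyl module as a $\glnt$-module. By Lemma~\ref{weylb}(ii), each quotient $W^{(s)}_j/W^{(s)}_{j-1}$ is isomorphic to $L_{\bs\mu^{(s,j)}}(b_s)$ for some partition $\bs\mu^{(s,j)}$ of $n_s$ with at most $N$ parts. Since Lemma~\ref{weylb}(i) identifies $W_{n_s}(b_s)$ with $V^{\otimes n_s}$ as a $\gln$-module, and since $V^{\otimes n_s}$ contains $L_{\bs\la}$ for every partition $\bs\la$ of $n_s$ with at most $N$ parts, every such partition does in fact arise, with appropriate multiplicity, among the $\bs\mu^{(s,j)}$.

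Next, I would tensor these filtrations over the index $s$ to obtain a filtration of $\otimes_{s=1}^k W_{n_s}(b_s)$ whose successive quotients are the $\glnt$-modules $\otimes_{s=1}^k L_{\bs\mu^{(s,j_s)}}(b_s)$, indexed by $(j_1,\dots,j_k)\in\prod_s\{1,\dots,l_s\}$ in any chosen total order refining the product partial order. This uses only the exactness of tensor product of vector spaces, applied iteratively to the short exact sequences $0\to W^{(s)}_{j-1}\to W^{(s)}_j\to L_{\bs\mu^{(s,j)}}(b_s)\to 0$. Because $b_1,\dots,b_k$ are distinct, the irreducibility statement recalled before Lemma~\ref{weylb} guarantees that each successive quotient is an irreducible $\glnt$-module, so the constructed filtration is already a composition series.

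The Jordan--H\"older theorem then yields both directions. For the ``only if'' direction, any irreducible subquotient of $\otimes_{s=1}^k W_{n_s}(b_s)$ is isomorphic to one of these composition factors, hence of the claimed form. For the ``if'' direction, given partitions $\bs\la^{(s)}$ of $n_s$ with at most $N$ parts, the first step supplies indices $j_s$ with $\bs\mu^{(s,j_s)}=\bs\la^{(s)}$, and then $\otimes_s L_{\bs\la^{(s)}}(b_s)$ appears as the corresponding composition factor. I do not expect a serious obstacle; the only point to handle carefully is that the composition series of the individual $W_{n_s}(b_s)$ really do combine into a filtration of the tensor product whose associated graded is the expected tensor product of factors, which is standard but rests on the bilinear exactness of tensor product of $\glnt$-modules.
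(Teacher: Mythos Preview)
Your argument is correct. The paper does not supply its own proof of this corollary---it is stated with a \qed and attributed to \cite{MTV6}---so there is no in-text proof to compare against; your route via composition series of the individual $W_{n_s}(b_s)$ (Lemma~\ref{weylb}(ii)), tensoring the filtrations, and invoking irreducibility of tensor products of evaluation modules at distinct points is exactly the natural deduction from the ingredients the paper has assembled, and is presumably what the authors intend.
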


Consider the $\Z_{\ge0}$-grading of the vector space $W_m$, introduced
in Lemma~\ref{weyl}. Let $W_m^j$ be the homogeneous component of $W_m$
of degree $j$ and $\bar W^j_m=\oplus_{r\ge j}W_m^{r}$.
Since the $\glnt$-module $W_m$ is graded and $W_m=W_m(b)$ as vector spaces,
$W_m(b)=\bar W_m^0\supset\bar W_m^1\supset\dots{}$ is a descending filtration
of $\glnt$-submodules. This filtration induces the structure of the associated
graded $\glnt$-module on the vector space $W_m$ which we denote by
$\gr W_m(b)$.

\begin{lem}[\cite{MTV6}]
\label{grWmb}
The $\glnt$-module $\gr W_m(b)$ is isomorphic to the evaluation module\/
$(V^{\otimes m})(b)$.
\vvn-.3>\qed
\end{lem}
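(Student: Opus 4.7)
The plan is to exhibit the isomorphism directly by working out the action of $\glnt$ on $\gr W_m(b)$ from the definitions and matching it with the evaluation action on $(V^{\otimes m})(b)$.

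First I would verify that the filtration $W_m(b)=\bar W_m^0\supset\bar W_m^1\supset\cdots$ is indeed a filtration by $\glnt$-submodules. The $\glnt$-action on $W_m(b)$ is the pull-back through $\rho_b$, so for $g\in\gln$ and $s\ge 0$ the operator by which $g\otimes t^s$ acts on $W_m(b)$ equals the coefficient of $u^{-s-1}$ in $g(u-b)$ computed in $W_m$, namely
\[
(g\otimes t^s)_{W_m(b)}\,=\,\sum_{r=0}^{s}\binom{s}{r}\,b^{\,s-r}\,(g\otimes t^r)_{W_m}\,.
\]
In the graded module $W_m$, the operator $(g\otimes t^r)_{W_m}$ has degree $r\ge 0$, so it sends $W_m^j$ into $W_m^{j+r}\subset\bar W_m^j$. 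Consequently each summand, and hence $(g\otimes t^s)_{W_m(b)}$, preserves the filtration.

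Next I compute the induced action on $\gr W_m(b)=\bigoplus_j W_m^j$. For $x\in W_m^j$, the terms in the sum above with $r>0$ carry $x$ into $W_m^{j+r}\subset\bar W_m^{j+1}$, so they vanish in the associated graded. Only the $r=0$ term survives, and it acts as $b^{\,s}\,(g\otimes 1)$. Thus on $\gr W_m(b)$ we have
\[
(g\otimes t^s)\,=\,b^{\,s}\,g\qquad\text{for all }g\in\gln,\ s\ge 0,
\]
where $g$ denotes the $\gln$-action inherited from $W_m$. By Lemma~\ref{weyl}(ii) this $\gln$-action realizes $W_m$ as $V^{\otimes m}$. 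On the evaluation module $(V^{\otimes m})(b)$, the definition of $\rho_b$ applied to $g(u)=g\>u^{-1}$ gives exactly the same formula $(g\otimes t^s)=b^{\,s}\,g$. Hence the identity map on the underlying vector space $W_m=V^{\otimes m}$ intertwines the two $\glnt$-actions.

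There is no real obstacle here; the content of the lemma is essentially the observation that passing to the associated graded of the Taylor expansion of $g(u-b)$ in $u^{-1}$ kills all terms except the leading one $b^{\,s}g$. The only points requiring a moment's care are the direction of the filtration and the sign of the degree shift, which is why I would make the binomial-expansion computation above explicit before concluding.
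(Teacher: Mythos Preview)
Your argument is correct. The paper itself does not give a proof of this lemma; it is stated with a \qed and attributed to \cite{MTV6}, so there is nothing to compare against in the present paper. Your computation is the natural one: expand $\rho_b(g\otimes t^s)=\sum_{r=0}^s\binom{s}{r}b^{\,s-r}(g\otimes t^r)$, observe that the $r>0$ terms raise degree strictly and hence die in the associated graded, and identify the surviving $r=0$ term with the evaluation action at $b$ on $V^{\otimes m}$ via Lemma~\ref{weyl}(ii). This is exactly the expected argument and there are no gaps.
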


The space $\otimes_{s=1}^k W_{n_s}$ has a natural $\Z_{\ge0}^k$-grading,
induced by the gradings on the factors, and the associated descending
$\Z_{\ge0}^k$-filtration by the subspaces $\otimes_{s=1}^k\bar W_{n_s}^{j_s}$,
invariant with respect to the $\glnt$-action on the module
$\otimes_{s=1}^k W_{n_s}(b_s)$. We denote by
$\gr\bigl(\otimes_{s=1}^k W_{n_s}(b_s)\bigr)$ the induced structure of
the associated graded $\glnt$-module on the space $\otimes_{s=1}^k W_{n_s}$.

\begin{lem}
\label{grtensor}
\label{split lem}
The $\glnt$-modules \,$\gr\bigl(\otimes_{s=1}^k W_{n_s}(b_s)\bigr)$
and\/ \,$\otimes_{s=1}^k \gr W_{n_s}(b_s)$ are canonically isomorphic.
\qed
\end{lem}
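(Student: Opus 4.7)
The plan is to reduce the lemma to the standard fact that, over a field, taking the associated graded with respect to a filtration commutes with forming tensor products, and then to observe that the primitive coproduct through which $\glnt$ acts on tensor products makes this commutation compatible with the module structure.

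First I identify the underlying $\Z_{\ge0}^k$-graded vector space on both sides with $\bigotimes_{s=1}^k W_{n_s}$, using on each factor the grading of Lemma~\ref{weyl}(i). For $\bigotimes_{s=1}^k \gr W_{n_s}(b_s)$ this is built into the definition, since each $\gr W_{n_s}(b_s)$ coincides with $W_{n_s}$ as a graded vector space. For $\gr\bigl(\bigotimes_{s=1}^k W_{n_s}(b_s)\bigr)$ it amounts to the canonical isomorphism
\begin{equation*}
\bigotimes_{s=1}^k \bigl(\bar W_{n_s}^{j_s}/\bar W_{n_s}^{j_s+1}\bigr)\;\stackrel{\sim}{\longrightarrow}\;\Bigl(\bigotimes_{s=1}^k \bar W_{n_s}^{j_s}\Bigr)\!\bigg/\!\sum_{i=1}^k\,\bigotimes_{s=1}^k \bar W_{n_s}^{j_s+\delta_{si}}\,,
\end{equation*}
valid for every multi-index $(j_1,\dots,j_k)$ by exactness of the tensor product over $\C$.

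Next I check that the two $\glnt$-actions agree under this identification. Any generator $g\otimes t^r\in\glnt$ acts on a tensor product of $\glnt$-modules via the primitive coproduct $\sum_{i=1}^k 1\otimes\dots\otimes(g\otimes t^r)\otimes\dots\otimes 1$, whose $i$-th summand affects only the $i$-th tensor factor. The paper's setup tells us that the filtration $\bar W_{n_s}^\bullet$ on each $W_{n_s}(b_s)$ is $\glnt$-stable, so each summand preserves the multi-filtration on $\bigotimes_s W_{n_s}(b_s)$ and descends to the associated graded. The descent of the $i$-th summand is exactly $1\otimes\dots\otimes\overline{g\otimes t^r}\otimes\dots\otimes 1$ acting on $\bigotimes_s \gr W_{n_s}(b_s)$, where $\overline{g\otimes t^r}$ denotes the induced operator on $\gr W_{n_i}(b_i)$; summing over $i$ reproduces the coproduct action on $\bigotimes_s \gr W_{n_s}(b_s)$.

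The argument is essentially formal and I do not anticipate a substantive obstacle. The only care required is to keep straight two facts already in the paper: that the filtration on each individual factor is $\glnt$-stable (used both to make each $\gr W_{n_s}(b_s)$ a $\glnt$-module and to let the summands of $\Delta(g\otimes t^r)$ pass to the graded), and that the coproduct acts one factor at a time, so that no cross-terms between distinct factors appear to spoil the tensor-product-of-gradeds identification.
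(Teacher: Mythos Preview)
Your proof is correct. The paper does not actually give a proof of this lemma: it is stated with an immediate \qed, indicating the authors regard it as a standard fact about filtered modules and primitive coproducts. Your argument spells out precisely the two ingredients that make it work---exactness of tensor products over $\C$ for the vector-space identification, and the one-factor-at-a-time nature of the Lie algebra coproduct for compatibility of the $\glnt$-actions---so you have supplied the details the paper omits.
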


\subsection{Remark on representations of symmetric group}
\label{repsym}
Let $S_n$ be the group of permutations of $n$ elements. We denote by $\C[S_n]$
the regular representation of $S_n$. Given an $S_n$-module $M$ we denote by
$M^S$ the subspace of all $S_n$-invariant vectors in $M$.

\begin{lem}
\label{elementary}
Let\/ $U$ be a finite-dimensional $S_n$-module.
Then \,$\dim(U\otimes\C[S_n])^S=\dim U$.
\vvn-.3>\qed
\end{lem}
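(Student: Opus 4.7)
The plan is to give a short character-theoretic proof, which is the cleanest route. The basic input is the classical identity valid for any finite group $G$ and any finite-dimensional complex $G$-module $W$,
$$\dim W^G \,=\, \frac{1}{|G|}\sum_{g\in G} \chi_W(g),$$
where $\chi_W$ denotes the character of the representation. This is a consequence of Maschke's theorem and of the fact that $|G|^{-1}\sum_g g$ is a projector onto $W^G$.

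I would apply this identity with $G = S_n$ and $W = U \otimes \C[S_n]$. Since the character of a tensor product is the pointwise product of the factor characters, and the character of the regular representation is concentrated at the identity, $\chi_{\on{reg}}(g) = |S_n|\cdot\delta_{g,e}$, the sum collapses to the single term at $g = e$, yielding
$$\dim(U\otimes \C[S_n])^S \,=\, \frac{1}{|S_n|}\,\chi_U(e)\cdot|S_n| \,=\, \dim U,$$
as required.

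As a conceptual sanity check, one can reach the same conclusion representation-theoretically via the canonical isomorphism $(U \otimes \C[S_n])^S \cong \on{Hom}_{S_n}(U^*,\C[S_n])$, together with the standard decomposition $\C[S_n] \cong \bigoplus_\lambda V_\lambda^{\oplus \dim V_\lambda}$ over the irreducibles $V_\lambda$ of $S_n$; Schur's lemma then gives $\dim \on{Hom}_{S_n}(M,\C[S_n]) = \dim M$ for any finite-dimensional $S_n$-module $M$, and one takes $M = U^*$.

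There is essentially no obstacle here: the lemma is a direct consequence of classical finite-group representation theory in characteristic zero, drawing only on complete reducibility, the averaging formula for invariants, and the well-known character of the regular representation.
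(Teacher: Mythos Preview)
Your proof is correct. The paper does not actually give a proof of this lemma --- it is stated with a \qed\ and treated as a standard fact --- so your character-theoretic argument (and the alternative via $\on{Hom}_{S_n}(U^*,\C[S_n])$) is a perfectly good way to justify it.
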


The group $S_n$ acts on the algebra $\C[z_1,\dots,z_n]$ by permuting
the variables. Let $\si_s(\bs z)$, $s=1,\alb\dots,n$, be the $s$-th elementary
symmetric polynomial in $z_1,\dots,z_n$. The algebra of symmetric polynomials
$\C[z_1,\dots,z_n]^S$ is a free polynomial algebra with generators
$\si_1(\bs z),\dots,\alb\si_n(\bs z)$. It is well-known that the algebra
$\C[z_1,\dots,z_n]$ is a free $\C[z_1,\dots,z_n]^S$-module of\linebreak
 rank $n!$,
see~\cite{M}.

Given $\bs a=(a_1,\dots,a_n)\in\C^n$, denote by
$I_{\bs a}\subset C[z_1,\dots,z_n]$ the ideal generated by
the polynomials $\si_s(\bs z)-a_s$, $s=1,\dots, n$.
The ideal $I_{\bs a}$ is $S_n$-invariant.

\begin{lem}
\label{regular}
For any $\bs a\in\C^n$, the $S_n$-representation $\C[z_1,\dots,z_n]/I_{\bs a}$
is isomorphic to the regular representation $\C[S_n]$.
\qed
\end{lem}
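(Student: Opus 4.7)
The plan is to deduce the lemma from the generic case by a flat family argument, using the freeness fact recalled from \cite{M} just above. Regard $R=\C[z_1,\dots,z_n]$ as a module over its symmetric subring $\C[\si_1(\bs z),\dots,\si_n(\bs z)]$, and pull back along the identification $\si_s\leftrightarrow a_s$ to obtain a locally free sheaf $\mc R$ of rank $n!$ on the parameter space $\C^n=\on{Spec}\>\C[a_1,\dots,a_n]$ whose fiber over $\bs a$ is exactly $\C[z_1,\dots,z_n]/I_{\bs a}$. Since the $S_n$-action on $R$ is $\C[a_1,\dots,a_n]$-linear, $\mc R$ is an $S_n$-equivariant vector bundle.

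Because $\C[S_n]$ is semisimple, the isotypic projectors are polynomial expressions in the group elements and hence lie in the image of $\C[S_n]\to\on{End}(\mc R)$; they yield a decomposition $\mc R=\bigoplus_\mu\mc R_\mu$ into $S_n$-equivariant sub-bundles indexed by the irreducible $S_n$-modules $V_\mu$. Each $\mc R_\mu$ has a well-defined constant rank, which may be computed at any convenient point $\bs a_0\in\C^n$. I would choose $\bs a_0$ so that the polynomial $t^n-a_1t^{n-1}+\dots+(-1)^n a_n$ has $n$ pairwise distinct roots $\zeta_1,\dots,\zeta_n$; then the zero locus of $I_{\bs a_0}$ is a single free $S_n$-orbit of $n!$ reduced points, so by the Chinese remainder theorem $R/I_{\bs a_0}$ is the ring of functions on that orbit, which with its $S_n$-action is manifestly the regular representation $\C[S_n]$.

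Consequently $\on{rk}\mc R_\mu=(\dim V_\mu)^2$ for every $\mu$, and passing to an arbitrary fiber yields $\C[z_1,\dots,z_n]/I_{\bs a}\cong\C[S_n]$ as an $S_n$-module for every $\bs a\in\C^n$. The only substantive input is the freeness of $R$ over $\C[\si_1,\dots,\si_n]$; once this is in hand, the rest is a rigidity statement for isotypic ranks in a flat family and should pose no real obstacle.
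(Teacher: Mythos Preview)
Your argument is correct. The paper itself gives no proof for this lemma: it is stated with a bare \qed, treating the result as well known (the freeness fact you invoke is attributed to \cite{M} just above, and the lemma presumably lives in the same circle of ideas). Your flat-family argument is one of the standard routes: freeness of $\C[z_1,\dots,z_n]$ over the symmetric subring makes the family of quotients a trivial rank-$n!$ bundle with fiberwise $S_n$-action; the isotypic projectors cut it into direct summands, which are projective and hence of constant rank over the connected base; and at a point with distinct roots the fiber is the function ring on a free $S_n$-orbit, visibly the regular representation. There is nothing to compare against in the paper.
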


\subsection{The $\glnt$-module $\V^S$}
\label{VS}
Let $\V$ be the space of polynomials in $z_1,\dots,z_n$ with coefficients
in $V^{\otimes n}$:
\vvn-.3>
\be
\V\>=\,V^{\otimes n}\<\otimes_{\C}\C[z_1,\dots,z_n]\,.
\vv.2>
\ee
The space $V^{\otimes n}$ is embedded in $\V$ as the subspace of constant
polynomials.

Abusing notation, for any $v\in V^{\otimes n}$ and
$p(z_1,\dots,z_n)\in\C[z_1,\dots,z_n]$, we will write
$p(z_1,\dots,z_n)\,v$ instead of $v\otimes p(z_1,\dots,z_n)$.

We make the symmetric group $S_n$ act on $\V$ by permuting the factors
of $V^{\otimes n}$ and the variables $z_1,\dots,z_n$ simultaneously,
\vvn.2>
\be
\si\bigl(p(z_1,\dots,z_n)\,v_1\otimes\dots\otimes v_n\bigr)\,=\,
p(z_{\si_1},\dots,z_{\si_n})\,
v_{(\si^{-1})_1}\!\otimes\dots\otimes v_{(\sigma^{-1})_n}\,,\qquad\si\in S_n\,.
\kern-3em
\vv.2>
\ee
We denote by $\V^S$ the subspace of $S_n$-invariants in $\V$.

\begin{lem}[\cite{MTV6}]
\label{VSfree}
The space $\V^S$ is a free $\C[z_1,\dots,z_n]^S$-module of rank $N^n$.
\qed
\end{lem}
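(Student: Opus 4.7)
The plan is to exhibit $\V^S$ as a direct summand of the free $\C[z_1,\dots,z_n]^S$-module $\V$, upgrade from projective to free using the natural grading, and then compute the rank by specializing at a point and applying Lemmas~\ref{regular} and~\ref{elementary}.

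Set $A=\C[z_1,\dots,z_n]^S$ and $R=\C[z_1,\dots,z_n]$. Since $R$ is a free $A$-module of rank $n!$ (recalled in the paragraph preceding Lemma~\ref{regular}) and $\V=V^{\otimes n}\otimes_\C R$ is free of rank $N^n$ over $R$, the $A$-module $\V$ is free of rank $N^n\,n!$. The $S_n$-action on $\V$ is $A$-linear, because $S_n$ fixes $A$ pointwise by definition. Therefore the averaging operator $e=\frac{1}{n!}\sum_{\si\in S_n}\si$ is an $A$-module idempotent on $\V$ with image $\V^S$, and $\V=\V^S\oplus(1-e)\V$ is an $A$-module direct sum decomposition. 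In particular $\V^S$ is a finitely generated projective $A$-module.

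To upgrade projective to free, I would use the grading on $R$ in which $\deg z_i=1$, placing $V^{\otimes n}$ in degree $0$. Then $A$ is a nonnegatively graded $\C$-algebra with $A_0=\C$, the $A$-action and the $S_n$-action on $\V$ are homogeneous, and $\V^S$ is a graded direct summand. Any finitely generated graded projective module $M$ over such an $A$ is graded-free, by a standard graded Nakayama argument: lift a homogeneous $\C$-basis of $M/A_+M$ to a graded surjection from a finitely generated graded free $A$-module, split it by projectivity, and observe that the kernel is a finitely generated graded projective module with vanishing quotient modulo $A_+$, hence zero.

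Finally, I would compute the rank by specializing at any $\bs a\in\C^n$. Let $\m_{\bs a}\subset A$ be the corresponding maximal ideal, so $\m_{\bs a}\cdot R=I_{\bs a}$. Because $e$ is $A$-linear, $S_n$-invariants commute with the base change $A\to A/\m_{\bs a}$, giving
\[
\V^S\otimes_A A/\m_{\bs a}\;\cong\;\bigl(V^{\otimes n}\otimes_\C R/I_{\bs a}\bigr)^S.
\]
By Lemma~\ref{regular}, $R/I_{\bs a}\cong\C[S_n]$ as an $S_n$-module, and Lemma~\ref{elementary} then yields that the right-hand side has dimension $\dim V^{\otimes n}=N^n$. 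Hence $\V^S$ is a free $A$-module of rank $N^n$. The only subtle point is the passage from projective to free; one could alternatively cite Quillen--Suslin since $A$ is a polynomial ring, but the graded Nakayama argument is elementary and self-contained.
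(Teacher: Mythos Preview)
Your argument is correct. The paper gives no proof here---it simply cites \cite{MTV6}---so there is no in-paper argument to compare against directly. That said, your proof is precisely what the surrounding material is arranged to support: the freeness of $\C[z_1,\dots,z_n]$ over $\C[z_1,\dots,z_n]^S$ is recalled just before Lemma~\ref{regular}, and Lemmas~\ref{elementary} and~\ref{regular} are stated exactly in order to carry out the fiber computation
\[
\dim\bigl(V^{\otimes n}\otimes_\C \C[z_1,\dots,z_n]/I_{\bs a}\bigr)^S
=\dim\bigl(V^{\otimes n}\otimes_\C \C[S_n]\bigr)^S=N^n
\]
that you perform. So your route is almost certainly the intended one.

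Each step checks out: the $S_n$-action is $A$-linear because $S_n$ fixes $A$ pointwise, so the averaging idempotent $e$ is a graded $A$-module projector onto $\V^S$; the graded Nakayama argument is fine (the kernel $K$ is graded as the kernel of a graded map and finitely generated as a summand of $F$, and the splitting forces $K/A_+K=0$); and the identification $\V^S\otimes_A A/\m_{\bs a}\cong e\cdot(\V/\m_{\bs a}\V)=(V^{\otimes n}\otimes_\C R/I_{\bs a})^S$ follows because $e$ commutes with the base change. Your remark that Quillen--Suslin would also do the job, since $A$ is itself polynomial in $\si_1(\bs z),\dots,\si_n(\bs z)$, is a legitimate alternative.
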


We consider the space $\V$ as a $\glnt$-module with
the series $g(u)$, \,$g\in\gln$, acting by
\vvn.1>
\beq
\label{action}
g(u)\,\bigl(p(z_1,\dots,z_n)\,v_1\otimes\dots\otimes v_n)\,=\,
p(z_1,\dots,z_n)\,\sum_{s=1}^n
\frac{v_1\otimes\dots\otimes gv_s\otimes\dots\otimes v_n}{u-z_s}\ .
\vv.2>
\eeq

\begin{lem}[\cite{MTV6}]
\label{Uz}
The image of the subalgebra $U(\z_N[t])\subset \Uglnt$ in
$\End(\V)$ coincides with the algebra of operators of multiplication
by elements of\/ $\C[z_1,\dots,z_n]^S$.
\qed
\end{lem}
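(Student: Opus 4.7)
The plan is to compute explicitly the action of a set of generators of $U(\z_N[t])$ on $\V$ via formula \Ref{action}, identify the resulting operators with multiplications by power sums of the variables $z_1,\dots,z_n$, and then invoke the fundamental theorem of symmetric functions to conclude.

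First I would recall that the center $\z_N\subset\gln$ is one-dimensional and spanned by $\sum_{i=1}^N e_{ii}$, so the subalgebra $\z_N[t]\subset\glnt$ is commutative and its universal enveloping algebra $U(\z_N[t])$ is freely generated, as a commutative algebra, by the elements $\sum_{i=1}^N e_{ii}\otimes t^r$ for $r\ge 0$. Equivalently, the generating function
\be
E(u)\,=\,\sum_{i=1}^N e_{ii}(u)\,=\,\sum_{r\ge 0}\Bigl(\sum_{i=1}^N e_{ii}\otimes t^r\Bigr)u^{-r-1}
\ee
encodes all generators. Thus, to determine the image of $U(\z_N[t])$ in $\End(\V)$, it suffices to describe how each coefficient of $E(u)$ acts.

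Next I would apply formula \Ref{action}. Since on the standard module $V$ the operator $\sum_{i=1}^N e_{ii}$ acts as the identity (the trace of the diagonal weights on the basis vectors of $V$ is $1$ on each basis vector), the action of $E(u)$ on $p(z_1,\dots,z_n)\,v_1\otimes\dots\otimes v_n$ collapses to multiplication by the scalar rational function $\sum_{s=1}^n(u-z_s)^{-1}$, namely
\be
E(u)\,\bigl(p\,v_1\otimes\dots\otimes v_n\bigr)\,=\,\Bigl(\sum_{s=1}^n\frac 1{u-z_s}\Bigr)\,p\,v_1\otimes\dots\otimes v_n\,.
\ee
Expanding $(u-z_s)^{-1}=\sum_{r\ge 0}z_s^r u^{-r-1}$ and reading off the coefficient of $u^{-r-1}$, the generator $\sum_{i=1}^N e_{ii}\otimes t^r$ acts on $\V$ precisely as multiplication by the $r$-th power sum $p_r(\bs z)=z_1^r+\dots+z_n^r$. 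In particular, every image operator is multiplication by a symmetric polynomial, and $p_0(\bs z)=n$ is a scalar.

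Finally I would close the argument with the fundamental theorem of symmetric functions: over $\C$, the power sums $p_1(\bs z),\dots,p_n(\bs z)$ generate the algebra $\C[z_1,\dots,z_n]^S$ (via Newton's identities). Therefore the image of $U(\z_N[t])$ in $\End(\V)$, which is the commutative subalgebra generated by the multiplication operators $p_r(\bs z)$ for $r\ge 1$, coincides with the algebra of multiplications by all elements of $\C[z_1,\dots,z_n]^S$. There is no serious obstacle; the only thing to verify carefully is the collapse of the sum in \Ref{action} due to $\sum_i e_{ii}$ acting as $1$ on $V$, after which the statement reduces to the classical fact about power sums generating symmetric polynomials.
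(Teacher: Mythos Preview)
Your argument is correct: the generators $\sum_i e_{ii}\otimes t^r$ act on $\V$ as multiplication by the power sums $p_r(\bs z)$ via formula~\Ref{action}, and over $\C$ these generate $\C[z_1,\dots,z_n]^S$. The paper does not supply its own proof of this lemma (it is stated with a citation to \cite{MTV6} and a \qed), so there is nothing to compare; your direct computation is the natural proof and matches what one finds in the reference.
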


The $\glnt$-action on $\V$ commutes with the $S_n$-action.
Hence, $\V^S$ is a $\glnt$-submodule of $\V$.

\goodbreak
Consider the grading on $\C[z_1,\dots,z_n]$ such that $\deg z_i=1$ for all
$i=1,\dots,n$. We define a grading on $\V$ by setting $\deg(v\otimes p)=\deg p$
for any $v\in V^{\otimes n}$ and any $p\in\C[z_1,\dots,z_n]$. The grading on
$\V$ induces a natural grading on $\End(\V)$.

\begin{lem}[\cite{MTV6}]
\label{cycl grad}
The $\glnt$-modules\/ $\V$ and $\V^S$ are graded.
\qed
\end{lem}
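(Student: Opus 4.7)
The plan is to verify compatibility of the gradings by reading off how each homogeneous element $g \otimes t^k \in \glnt$ acts on a homogeneous element of $\V$, and then to observe that the $S_n$-action preserves the grading so $\V^S$ inherits it.

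More precisely, I would first expand both sides of the defining formula~\Ref{action} in powers of $u^{-1}$. On the left we have $g(u) = \sum_{k \geq 0} (g \otimes t^k)\,u^{-k-1}$, while on the right, the expansion $\frac{1}{u-z_s} = \sum_{k \geq 0} z_s^k\,u^{-k-1}$ (valid formally, and this is how the action is to be interpreted, since $\V$ carries no topology making $z_s$ small) gives
\be
(g \otimes t^k)\bigl(p(z_1,\dots,z_n)\,v_1 \otimes \dots \otimes v_n\bigr)
\,=\, p(z_1,\dots,z_n)\,\sum_{s=1}^n z_s^k\,
v_1 \otimes \dots \otimes g v_s \otimes \dots \otimes v_n\,.
\ee
Under the grading on $\V$ with $\deg z_i = 1$ and $\deg v = 0$ for $v \in V^{\otimes n}$, the right-hand side is homogeneous of degree $\deg p + k$. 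Since $\glnt$ is graded with $\deg(g \otimes t^k) = k$, this shows that the action raises degree by exactly the degree of the acting element, which is the definition of a graded module structure. Hence $\V$ is a graded $\glnt$-module.

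For $\V^S$, I would note that the $S_n$-action defined above permutes the variables $z_1,\dots,z_n$ and the tensor factors simultaneously, and therefore preserves the grading on $\V$ (permuting the $z_i$ does not change the total polynomial degree). Consequently each homogeneous component $\V_j$ of $\V$ is an $S_n$-submodule, so
\be
\V^S \,=\, \tbigoplus_{j \geq 0} (\V_j)^S,
\ee
that is, $\V^S$ is a graded subspace of $\V$. Since $\V^S$ is already known (from the discussion preceding Lemma~\ref{cycl grad}) to be a $\glnt$-submodule of $\V$, it is in fact a graded $\glnt$-submodule, and the graded $\glnt$-module structure is the restriction of that on $\V$.

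There is really no obstacle here; the only thing to be careful about is to make sure that both sides of \Ref{action} are expanded as formal Laurent series in $u^{-1}$ with the same convention, so that reading off the coefficient of $u^{-k-1}$ produces the correct action of $g \otimes t^k$. Once that bookkeeping is done, the grading statements are immediate from the formula.
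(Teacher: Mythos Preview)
Your argument is correct and is exactly the natural verification: expand \Ref{action} as a formal series in $u^{-1}$ to see that $g\otimes t^k$ multiplies by $z_s^k$ in the $s$-th slot, hence raises degree by $k$, and then use that the $S_n$-action preserves degrees. The paper itself gives no proof here, simply citing \cite{MTV6}, so there is nothing further to compare.
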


The following lemma is contained in \cite{K}, see also \cite{MTV6}.

\begin{lem}
\label{cycl=symm}
The $\glnt$-module $\V^S$ is cyclic with a cyclic vector\/ $v_+^{\otimes n}$.
\qed
\end{lem}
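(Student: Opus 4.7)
My plan combines graded Nakayama's lemma with analysis of a single fibre. By Lemma~\ref{Uz}, the subalgebra $U(\z_N[t])\subset U(\glnt)$ acts on $\V$ by multiplication by $\C[\bs z]^S$, so $W:=U(\glnt)v_+^{\otimes n}$ --- a priori only a $\glnt$-submodule of $\V^S$ --- is automatically a graded $\C[\bs z]^S$-submodule, with $v_+^{\otimes n}$ in degree zero. Since by Lemma~\ref{VSfree} the ambient $\V^S$ is a free graded $\C[\bs z]^S$-module of rank $N^n$, graded Nakayama reduces $W=\V^S$ to showing that the image $\bar W$ of $W$ in the fibre $\V^S/\m_+\V^S$ fills it up, where $\m_+\subset\C[\bs z]^S$ is the augmentation ideal; by freeness $\dim_{\C}\V^S/\m_+\V^S=N^n$.

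A direct computation using \eqref{action} shows $(e_{11}\otimes t^r)v_+^{\otimes n}=p_r(\bs z)v_+^{\otimes n}$ with $p_r(\bs z):=\sum_s z_s^r$; since $p_r\in\m_+$ for $r\geq 1$ and since $e_{ij}v_+=0$ for $i<j$ and $e_{j1}^2v_+=0$ in $V$, the image $\bar v_+^{\otimes n}\in\V^S/\m_+\V^S$ satisfies the defining relations of the cyclic vector $v_n$ of the Weyl module $W_n$. Because $\glnt$ commutes with $\C[\bs z]^S$-multiplication on $\V$, the quotient $W/\m_+W$ is a cyclic $\glnt$-module, and Lemma~\ref{weylb}(iv) (with $k=1$, $n_1=n$, $b_1=0$) yields a surjection $W_n\twoheadrightarrow W/\m_+W$. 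Combined with Lemma~\ref{weyl}(ii), this gives $\dim W/\m_+W\leq N^n$.

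For the reverse inequality I would use the generic fibre: for $\bs a$ with pairwise distinct roots $b_1,\dots,b_n$ of $z^n-a_1z^{n-1}+\cdots+(-1)^na_n$, the Chinese Remainder decomposition $\C[\bs z]/I_{\bs a}\cong\bigoplus_\sigma\C$ together with $S_n$-invariant-taking identifies $\V^S/\m_{\bs a}\V^S\cong\bigotimes_{s=1}^n V(b_s)$ as $\glnt$-modules, with $\bar v_+^{\otimes n}\mapsto v_+^{\otimes n}$. The right-hand side is irreducible (distinct evaluation parameters), so $\bar v_+^{\otimes n}$ generates it; hence $W$ has generic $\C[\bs z]^S$-rank $N^n$, and graded Nakayama forces $\dim W/\m_+W\geq N^n$. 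Thus $\dim W/\m_+W=N^n$ and the Weyl surjection is an isomorphism $W_n\cong W/\m_+W$. The \emph{main obstacle} is the last step: upgrading this to equality $\bar W=\V^S/\m_+\V^S$, i.e., injectivity of the natural map $W/\m_+W\to\V^S/\m_+\V^S$ (equivalently, $\C[\bs z]^S$-saturation of $W$ in $\V^S$). I would close it by matching graded characters of $W_n$ and $\V^S/\m_+\V^S$, invoking Lemma~\ref{grWmb} to realise $W_n$ as the flat limit of $\bigotimes_s V(b_s)$ as $\bs b\to 0$; the coinciding characters then force the inclusion $\bar W\cong W_n\hookrightarrow\V^S/\m_+\V^S$ to be surjective, and graded Nakayama concludes $W=\V^S$.
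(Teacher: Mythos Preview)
The paper does not give its own proof of this lemma; it simply cites \cite{K} and \cite{MTV6}. So there is no ``paper's approach'' to compare to, and your proposal must be judged on its own merits.

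Your strategy via graded Nakayama is natural, and most of the pieces are sound: $W=U(\glnt)\,v_+^{\otimes n}$ is a graded $\C[\bs z]^S$-submodule of the free module $\V^S$; the Chinese Remainder analysis of the generic fibre is correct and shows $W+\m_{\bs a}\V^S=\V^S$ for $\bs a$ with distinct roots; and the lower bound $\dim W/\m_+W\ge N^n$ does follow, since a module generated by $k$ elements has rank at most $k$.

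The gap is exactly where you flag it, and your proposed closure does not work. Two problems. First, Lemma~\ref{grWmb} does not compute $\ch_{W_n}(q)$: it says $\gr W_m(b)\cong (V^{\otimes m})(b)$ as $\glnt$-modules, but the right-hand side carries no intrinsic grading, so this statement gives no information about the graded character of $W_m$. Second, even granting $\ch_{W_n}=\ch_{\V^S/\m_+\V^S}$ (which can be checked by computing both sides independently --- the left via \cite{CL}, \cite{CP}, the right from Lemmas~\ref{VSfree} and~\ref{lem on char of VSl} as a product of $q$-multinomials), this equality alone does not force the map $\phi:W_n\to\V^S/\m_+\V^S$ to be injective or surjective: a graded map between spaces with equal characters can perfectly well have nontrivial kernel and cokernel of equal characters. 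Your phrase ``the inclusion $\bar W\cong W_n\hookrightarrow\V^S/\m_+\V^S$'' already presupposes the injectivity you are trying to establish.

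Equivalently, the difficulty is that $\V^S/W$ is a graded torsion module over $\C[\bs z]^S$ (from the generic fibre), but a graded torsion quotient of a free module need not vanish: $\C[x]/x\C[x]$ is the basic counterexample. You need an additional input to kill $\V^S/W$. One honest route is to show saturation directly (for instance by checking, weight space by weight space, that the explicit spanning vectors in the proof of Lemma~\ref{lem on char of VSl} all lie in $W$ by acting with suitable monomials in the $e_{i1}\otimes t^r$ on $v_+^{\otimes n}$); this is closer to how the cited references actually proceed. The abstract Nakayama-plus-generic-fibre machine, by itself, cannot close the loop.
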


\begin{lem}
\label{lem on char of VSl}
For any partition $\bs\la$ of\/ $n$ with\/ at most $N$ parts,
 the
 graded character of the space $(\V^S)_{\bs\la}$ is given by
\vvn-.2>
\bean
\label{forMula}
\ch_{(\V^S)_{\bs\la}}(q)\,=\,{\prod_{i=1}^N\frac1{(q)_{\la_i}}}\;.
\eean
where\/ $\,(q)_a=\prod_{j=1}^a(1-q^j)\,$.
\end{lem}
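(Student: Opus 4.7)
The plan is to identify $(\V^S)_{\bs\la}$ as a graded vector space with a ring of partial symmetric polynomials, whose Hilbert series is the claimed product.

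First, I would describe the $S_n$-module structure on $V^{\otimes n}_{\bs\la}$. A basis of $V^{\otimes n}_{\bs\la}$ is indexed by functions $\{1,\dots,n\}\to\{1,\dots,N\}$ in which the value $i$ is assumed exactly $\la_i$ times; $S_n$ permutes these basis vectors transitively, and the stabilizer of the distinguished vector $v_+^{\otimes\la_1}\otimes(e_{21}v_+)^{\otimes\la_2}\otimes\dots$ is the Young subgroup $H=S_{\la_1}\times\dots\times S_{\la_N}$ (acting on the variables $z_1,\dots,z_{\la_1}$; $z_{\la_1+1},\dots,z_{\la_1+\la_2}$; etc.). Hence $V^{\otimes n}_{\bs\la}\cong\C[S_n/H]$ as an $S_n$-module.

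Next, I would invoke the standard isomorphism $\bigl(\C[S_n/H]\otimes M\bigr)^{S_n}\cong M^H$, valid for any $S_n$-module $M$ and given explicitly by sending $f\in M^H$ to $\sum_{\sigma\in S_n/H}\sigma\cdot(e_H\otimes f)$, where $e_H$ is the distinguished coset. Applying this with $M=\C[z_1,\dots,z_n]$, on which the diagonal $S_n$-action restricts to the action by variable permutation, I obtain a grading-preserving linear isomorphism
\[
(\V^S)_{\bs\la}\;\cong\;\C[z_1,\dots,z_n]^H.
\]
The grading is preserved because $V^{\otimes n}_{\bs\la}$ sits in degree zero and the isomorphism does not touch the polynomial factor.

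Finally, I would compute the Hilbert series of the right-hand side. The subgroup $H$ permutes the variables within each of the $N$ consecutive blocks of sizes $\la_1,\dots,\la_N$, so $\C[z_1,\dots,z_n]^H$ factors as a tensor product of $N$ polynomial algebras, the $i$-th factor being the algebra of symmetric polynomials in $\la_i$ variables. Each such algebra is freely generated by elementary symmetric polynomials in degrees $1,2,\dots,\la_i$, so it has Hilbert series $\prod_{j=1}^{\la_i}(1-q^j)^{-1}=1/(q)_{\la_i}$. Multiplying over $i$ yields the stated formula. I do not anticipate a serious obstacle: the only point requiring care is checking that the Frobenius-reciprocity-style identification in the middle step is grading-preserving, which is immediate from the explicit form of the map.
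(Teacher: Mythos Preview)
Your proof is correct and is essentially the same as the paper's, just phrased more abstractly: the paper directly exhibits a basis of $(\V^S)_{\bs\la}$ by taking $S_n$-orbits of $p(z_1,\dots,z_n)\,v_+^{\otimes\la_1}\otimes(e_{21}v_+)^{\otimes\la_2}\otimes\cdots\otimes(e_{N1}v_+)^{\otimes\la_N}$ with $p$ symmetric in each block, which is exactly what your Frobenius-reciprocity identification $(\C[S_n/H]\otimes\C[z])^{S_n}\cong\C[z]^H$ produces when unwound. Both then read off the Hilbert series of the ring of block-symmetric polynomials as $\prod_i 1/(q)_{\la_i}$.
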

\begin{proof}
A basis of $(\V^S)_{\bs\la}$ is given by the $S_n$-orbits of the
$V^{\otimes n}$-valued polynomials of the form
\bea
p(z_1,\dots,z_n)\,(v_+)^{\otimes \la_1}\otimes (e_{21}v_+)^{\otimes \la_2}\otimes
\dots \otimes (e_{N1}v_+)^{\otimes \la_N}\ ,
\eea
where $p(z_1,\dots,z_n)$ is a polynomial symmetric with respect to the first
$\la_1$  variables, symmetric with respect to the next
$\la_2$  variables, and so on and finally 
 symmetric with respect to the last $\la_N$ variables.
Clearly the graded character of the space of such polynomials is given by
formula \Ref{forMula}.
\end{proof}

\subsection{Weyl modules as quotients of $\V^S$}
Let $\bs a=(a_1,\dots,a_n)\in\C^n$ be a sequence of complex numbers and
$I_{\bs a}\subset \C[z_1,\dots,z_n]$ the ideal,
defined in Section~\ref{repsym}. Define
\vvn.3>
\beq
\label{IVa}
I^\V_{\bs a}\,=\,I_{\bs a}\V^S\,.
\vv.2>
\eeq
Clearly, ${I^\V_{\bs a}}$ is a $\glnt$-submodule of $\V^S$.

Introduce distinct complex numbers $b_1,\dots,b_k$ and
natural numbers $n_1,\dots,n_k$ by the relation
\vvn-.6>
\beq\label{ab}
\prod_{s=1}^k\,(u-b_s)^{n_s}=\,u^n+\>\sum_{j=1}^n (-1)^j\>a_j\>u^{n-j}.
\vv-.7>
\eeq
Clearly, $\sum_{s=1}^kn_s=n$.

\begin{lem}[\cite{MTV6}]
\label{factor=weyl}
The $\glnt$-modules $\V^S/{I^\V_{\bs a}}$ and \,$\otimes_{s=1}^kW_{n_s}(b_s)$
are isomorphic.
\qed
\end{lem}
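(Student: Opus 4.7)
The plan is to apply Lemma~\ref{weylb}(iv) with $M=\V^S/I^\V_{\bs a}$ and cyclic vector equal to the image $\bar v$ of $v_+^{\otimes n}$, thereby producing a surjective $\glnt$-homomorphism $\psi\colon\otimes_{s=1}^k W_{n_s}(b_s)\to\V^S/I^\V_{\bs a}$ sending $\otimes_{s=1}^k v_{n_s}$ to $\bar v$, and then to conclude by matching dimensions.

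For the hypotheses of Lemma~\ref{weylb}(iv): cyclicity of $\bar v$ in $\V^S/I^\V_{\bs a}$ follows from cyclicity of $v_+^{\otimes n}$ in $\V^S$ (Lemma~\ref{cycl=symm}), since cyclicity descends to quotients. The action formula~\Ref{action} together with $e_{ij}v_+=0$ for $i<j$ and $e_{ii}v_+=\dl_{1i}v_+$ immediately gives $e_{ij}(u)\>v_+^{\otimes n}=0$ for $i<j$ and $e_{ii}(u)\>v_+^{\otimes n}=0$ for $i\ge 2$ already in $\V^S$, leaving only the $e_{11}(u)$ relation. In $\V$ we compute
\[
e_{11}(u)\>v_+^{\otimes n}\,=\,\Bigl(\sum_{s=1}^n\frac1{u-z_s}\Bigr)\>v_+^{\otimes n},
\]
and the main technical point is the congruence of formal power series in $u^{-1}$,
\[
\sum_{s=1}^n\frac1{u-z_s}\,\equiv\,\sum_{s=1}^k\frac{n_s}{u-b_s}\pmod{I_{\bs a}}.
\]
This is verified by expanding both sides in $u^{-1}$, recognizing the coefficients on the left as the power sums $p_r(\bs z)=\sum_s z_s^r$, using Newton's identities to express them as polynomials in $\si_1(\bs z),\dots,\si_n(\bs z)$, and noting that modulo $I_{\bs a}$ one has $\si_j(\bs z)\equiv a_j$, so by the defining relation~\Ref{ab} the resulting power sums reduce to $\sum_{s=1}^k n_s b_s^r$. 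Lemma~\ref{weylb}(iv) then delivers the surjection $\psi$.

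It remains to match dimensions. The domain has dimension $\prod_{s=1}^k N^{n_s}=N^n$ by Lemma~\ref{weyl}(ii). For the codomain, Lemma~\ref{VSfree} asserts that $\V^S$ is free of rank $N^n$ over $\C[z_1,\dots,z_n]^S$; since $I_{\bs a}$ is generated by the elements $\si_j(\bs z)-a_j$, which lie in $\C[z_1,\dots,z_n]^S$, and $\C[z_1,\dots,z_n]^S/(\si_j-a_j)_{j=1}^n=\C$, we obtain $\dim\V^S/I^\V_{\bs a}=N^n$. Hence $\psi$ is an isomorphism. The main obstacle is the reduction modulo $I_{\bs a}$ that produces the $e_{11}(u)$ relation; once that is in hand, the rest is a direct combination of the cited lemmas together with the tensor product cyclicity statement of Lemma~\ref{weylb}(iv).
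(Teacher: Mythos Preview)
Your proof is correct. The paper itself does not supply a proof of this lemma; it is quoted from \cite{MTV6} with a bare \qed, so there is nothing to compare against directly. Your argument is the natural one given the tools already assembled in the paper: Lemma~\ref{cycl=symm} for cyclicity of the image of $v_+^{\otimes n}$, the action formula~\Ref{action} for the relations on that vector, Lemma~\ref{weylb}(iv) to produce the surjection, and Lemma~\ref{VSfree} together with Lemma~\ref{weyl}(ii) for the dimension match. One minor organizational point: Lemma~\ref{weylb}(iv) takes finite-dimensionality of $M$ as a hypothesis, so the computation $\dim\V^S/I^\V_{\bs a}=N^n$ ought to precede the appeal to that lemma rather than follow it. This is a reordering, not a gap.
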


\section{Bethe algebra}
\label{bethesec}
\subsection{Universal differential operator}
\label{secbethe}
Let $\bs K=(K_{1},\dots,K_N)$ be a seguence of distinct complex numbers.
Let $\der$ be the operator of differentiation in a variable $u$.
Define the {\it universal differential operator\/} $\D^\B$ by
\vvn.3>
\be
\D^\B=\,\rdet\left( \begin{matrix}
\der-K_{1}-\>e_{11}(u) & -\>e_{21}(u)& \dots & -\>e_{N1}(u)\\
-\>e_{12}(u) &\der-K_{2}-e_{22}(u)& \dots & -\>e_{N2}(u)\\
\dots & \dots &\dots &\dots \\
-\>e_{1N}(u) & -\>e_{2N}(u)& \dots & \der-K_{N}-e_{NN}(u)
\end{matrix}\right).
\vv.2>
\ee
It is a differential operator in the variable $u$, whose coefficients are
formal power series in $u^{-1}$ with coefficients in
$\Uglnt$,
\vvn-.3>
\beq
\label{DB}
\D^\B=\,\der^N+\sum_{i=1}^N\,B_i(u)\,\der^{N-i}\>,
\vv-.5>
\eeq
where
\vvn-.3>
\beq
\label{Bi}
B_i(u)\,=\,\sum_{j=0}^\infty B_{ij}\>u^{-j}
\eeq
and \,$B_{ij}\in\Uglnt$ \,for \,$i=1,\dots,N$, \,$j\geq 0$.

\begin{lem}
We have
\vvn-.4>
\beq
\label{B1}
B_1(u)\,=\,-\sum_{i=1}^N\,\bigl(K_{i}+ e_{ii}(u)\bigr)
\vv-.6>
\eeq
and
\vvn-.4>
\beq
\label{Bii}
\sum_{i=0}^N\,B_{i\>0}\,\al^{N-i}\,=\,\prod_{i=1}^N\,(\al-K_i)\,,
\vv.2>
\eeq
where \,$\al$ is a variable and \>$B_{00}=1$.
\qed
\end{lem}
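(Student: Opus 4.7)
The plan is to read off both $B_1(u)$ and the coefficients $B_{i\>0}$ directly from the Leibniz expansion of the row determinant defining $\D^\B$. Writing $a_{ij}$ for the $(i,j)$-entry of the defining matrix, only the diagonal entries $a_{ii}=\der - K_i - e_{ii}(u)$ carry a $\der$, whereas the off-diagonal entries $a_{ij} = -\>e_{ji}(u)$ contain no $\der$. Hence for $\si\in S_N$ with support of size $|F|$, the product $a_{1\si(1)}\,a_{2\si(2)}\dots a_{N\si(N)}$ has $\der$-degree at most $N-|F|$, and only the identity permutation can contribute to the coefficients of $\der^N$ and $\der^{N-1}$. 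For both parts of the lemma, I may thus restrict attention to the ordered product $\prod_{i=1}^N\bigl(\der - K_i - e_{ii}(u)\bigr)$.

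For \Ref{B1}, I extract the coefficient of $\der^{N-1}$ in this product. Setting $X_i = -K_i - e_{ii}(u)$ and using $[\der, e_{ii}(u)] = e_{ii}'(u)$, a short induction yields $\der^k X = X\der^k + (\text{strictly lower }\der\text{-degree})$ for every $X\in\Uglnt[[u^{-1}]]$. A term in the expansion that selects $X_i$ from one factor and $\der$ from the remaining $N-1$ factors therefore contributes $X_i\>\der^{N-1}$ to the $\der^{N-1}$-coefficient, while choices selecting two or more $X_j$'s have $\der$-degree at most $N-2$. Summing the single-$X$ contributions gives $\sum_{i=1}^N X_i = -\sum_{i=1}^N\bigl(K_i + e_{ii}(u)\bigr)$, which is \Ref{B1}.

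For \Ref{Bii}, I apply the ring homomorphism $\ep\colon\Uglnt[[u^{-1}]][\der]\to\Uglnt[\der]$ sending a power series in $u^{-1}$ to its constant term. It is routine to check that $\ep$ respects the non-commutative product, essentially because $\ep(f') = 0$ for every $f\in\Uglnt[[u^{-1}]]$. Since $e_{ij}(u)\in u^{-1}\>\Uglnt[[u^{-1}]]$, every off-diagonal entry is killed, so $\ep$ sends the defining matrix of $\D^\B$ to the diagonal matrix with entries $\der - K_i$; as $\ep$ respects the row-determinant expansion, $\ep(\D^\B) = \prod_{i=1}^N(\der - K_i)$. On the other hand, applying $\ep$ to the expansion \Ref{DB}--\Ref{Bi} produces $\sum_{i=0}^N B_{i\>0}\>\der^{N-i}$ with $B_{0\>0}=1$. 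Equating these two expressions and renaming $\der$ to $\al$ yields \Ref{Bii}. The only delicate point is the $\der$-degree bookkeeping in the first part, but the non-commutativity of $\der$ with $e_{ii}(u)$ only generates corrections of strictly lower $\der$-degree and hence cannot affect the $\der^{N-1}$ coefficient.
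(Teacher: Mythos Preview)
Your proof is correct. The paper itself gives no argument for this lemma --- it is marked \qed\ immediately after the statement, i.e.\ treated as an immediate consequence of the definition of $\D^\B$. What you have written is exactly the straightforward computation the authors omit: the observation that any non-identity permutation in the row-determinant expansion moves at least two indices and hence contributes $\der$-degree at most $N-2$, so both the $\der^N$ and $\der^{N-1}$ coefficients come solely from the ordered product $\prod_{i=1}^N(\der-K_i-e_{ii}(u))$; and the observation that passing to the $u^0$-term kills all $e_{ij}(u)$ and all commutator corrections, leaving $\prod_{i=1}^N(\al-K_i)$.

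One small cosmetic remark: your phrasing of the ``ring homomorphism'' $\ep$ is perfectly valid, but you could equally well avoid introducing it and simply note that each $B_{ij}$ with $j\ge 1$ arises from a term in the row determinant containing at least one factor $e_{rs}\otimes t^p$, so the $j=0$ part of each $B_i(u)$ depends only on the scalar diagonal entries $\der-K_i$; this is the one-line version the authors presumably have in mind. Either way there is no gap.
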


\begin{lem}
\label{B deg}
The element $B_{ij}\in\Uglnt$ \,$i=1,\dots,N$, $j\ge 1$, is a sum
of homogeneous elements of degrees \,$j-1\>,j-2\>,\dots,\max\>(j-i,0)$\,.
\end{lem}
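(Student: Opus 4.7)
The plan is to extend the $\Z_{\ge 0}$-grading on $\Uglnt$ to $\Uglnt[[u^{-1}]][\der]$ by declaring $\deg u = 1$ and $\deg \der = -1$. Under this extension, the relation $[\der,u] = 1$ is homogeneous of degree zero, each generating series $g(u)$ with $g \in \gln$ is homogeneous of degree $-1$, and the normal-ordering identity $\der\,g(u) = g(u)\,\der + g'(u)$ is itself degree-preserving (both sides are of degree $-2$). Consequently, each entry of the matrix defining $\D^\B$ splits canonically into homogeneous pieces: a diagonal entry $\der - K_i - e_{ii}(u)$ contributes one piece of degree $0$ (the scalar $-K_i$) and two pieces of degree $-1$ (namely $\der$ and $-e_{ii}(u)$), while an off-diagonal entry $-e_{ji}(u)$ is a single piece of degree $-1$.

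I would then expand the row determinant as $\sum_{\sigma \in S_N}(-1)^\sigma a_{1\sigma(1)}\dots a_{N\sigma(N)}$, split each diagonal factor into its three pieces, and label a generic monomial in the expansion by $\sigma$ together with, at each fixed point of $\sigma$, a choice from $\{\der,\,-K_i,\,-e_{ii}(u)\}$. Let $p$ denote the number of fixed points at which $-K_i$ is chosen. The monomial is the product of $p$ pieces of degree $0$ and $N - p$ pieces of degree $-1$, hence has total degree $p - N$. Moving every $\der$ to the right using the degree-preserving rule above rewrites this monomial as a sum of normal-ordered terms $X\,u^{-j}\,\der^{N-i}$ with $X \in \Uglnt$, and matching degrees yields $\deg X = p + j - i$.

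Finally, I would pin down the range of $p$ for nonzero contributions to $B_{ij}$ with $j \ge 1$. Let $r$ be the total number of $e(u)$-pieces selected (diagonal $-e_{ii}(u)$'s together with all off-diagonal $-e_{\sigma(i),i}(u)$'s), and let $d$ be the total number of differentiations produced when $\der$'s pass through $e(u)$-factors during normal-ordering. Counting the $N$ factors yields $p + r + d = i$, while matching the $u$-exponent gives $j = \sum_l s_l + r + d$, where $s_l \ge 0$ is the $t$-exponent of the $l$-th chosen $e$-generator. Non-negativity of the $s_l$ forces $r + d \le j$, and hence $p \ge \max(0,\,i - j)$; while for $j \ge 1$ one must have $r + d \ge 1$ (otherwise the $u$-exponent is zero), so $p \le i - 1$. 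Combining, $\deg X = p + j - i$ takes values in $\{\max(j-i,0),\,\dots,\,j-1\}$, which is exactly the claim.

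The only somewhat delicate point is ensuring that moving $\der$'s past $u^{-s}$-factors during normal-ordering does not shift the $\Uglnt$-degree of the coefficient $X$; this is guaranteed precisely by the degree-preserving form of $\der\,g(u) = g(u)\,\der + g'(u)$, which reduces the argument to the elementary bookkeeping above.
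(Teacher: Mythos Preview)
Your proof is correct and follows essentially the same approach as the paper: both rest on the observation that, in the grading with $\deg u=1$, the series $B_i(u)-B_{i0}$ decomposes into homogeneous pieces of degrees $-1,\dots,-i$, from which the stated range for $\deg B_{ij}$ is immediate. The paper states this decomposition in one sentence as ``straightforward,'' whereas you unpack it explicitly by also assigning $\deg\der=-1$ and carrying out the term-by-term bookkeeping $(p+r+d=i$, $j=\sum_l s_l+r+d)$; this is a faithful elaboration rather than a different route.
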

\begin{proof}
It is straightforward to see that the series $B_i(u)-\si_i(K_1,\dots,K_N)$,
where $\si_i$ is the $i$-th elementary symmetric polynomial, is a sum
of homogeneous series of degrees $-1,\dots,-i$. The lemma follows.
\end{proof}

We call the unital subalgebra of $\Uglnt$ generated by $B_{ij}$,
with $i=1,\dots,N$, $j\geq 0$, the {\it Bethe algebra\/}
and denote it by $\B$.

\begin{thm}[\cite{CT}, \cite{MTV1}]
\label{T-thm}
The algebra $\B$ is commutative. The algebra $\B$ commutes with
the subalgebra $U(\h)\subset \Uglnt$.
\qed
\end{thm}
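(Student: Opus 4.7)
The plan is to establish the two assertions separately. For the commutation with $U(\h)$, it suffices to check $[e_{ii}, \D^\B] = 0$ for every $i = 1, \dots, N$, since the $B_{ij}$ are extracted from the coefficients of $\D^\B$ and $\h$ is generated by $e_{11}, \dots, e_{NN}$. In the row-determinant expansion
\be
\D^\B = \sum_{\sigma \in S_N} (-1)^\sigma \prod_s M_{s, \sigma(s)}, \qquad M_{kl} = (\partial - K_k)\delta_{kl} - e_{lk}(u),
\ee
each factor $M_{s, \sigma(s)}$ has a well-defined weight under the adjoint $e_{ii}$-action: if $\sigma(s) = s$ the factor is $\h$-invariant, and if $\sigma(s) \ne s$ then $[e_{ii}, e_{\sigma(s), s}(u)] = (\delta_{i\sigma(s)} - \delta_{is})\, e_{\sigma(s), s}(u)$ shows the factor is an $e_{ii}$-eigenvector with eigenvalue $\delta_{i\sigma(s)} - \delta_{is}$. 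Thus the monomial indexed by $\sigma$ is an eigenvector with total eigenvalue $\sum_s (\delta_{i\sigma(s)} - \delta_{is}) = 0$, because $\sigma$ is a bijection; each monomial therefore commutes with $e_{ii}$, and summing over $\sigma$ yields $[e_{ii}, \D^\B] = 0$.

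For commutativity of $\B$, the natural route---going back to Talalaev \cite{T} and developed in \cite{CT} and \cite{MTV1}---is to prove the stronger statement $[B_i(u), B_j(v)] = 0$ as a formal identity in $u^{-1}, v^{-1}$; extracting coefficients then gives commutativity of all the $B_{ij}$. The idea is to recognize the matrix $M$ underlying the row determinant as a ``Manin matrix with $\partial$'': entries within each column commute (modulo the expected $\partial$-correction), and the cross-commutator $[M_{ij}, M_{kl}] - [M_{kj}, M_{il}]$ vanishes for $i \ne k$ and $j \ne l$. For such matrices, the row determinant admits analogues of the Cauchy--Binet and characteristic polynomial identities from which $[B_i(u), B_j(v)] = 0$ follows. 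Moreover, the diagonal shift by $-\on{diag}(K_1, \dots, K_N)$ consists of scalars commuting with every $e_{lk}(u)$, so it preserves the Manin structure, and the commutativity for $\bs K = 0$ (the original Talalaev theorem) automatically propagates to arbitrary $\bs K$.

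The main obstacle is the Manin-matrix verification itself: the presence of $\partial$ makes the column-commutation and cross-commutator conditions subtle, since one must carefully account for $[\partial, e_{lk}(u)] = \partial_u e_{lk}(u)$ whenever two distinct entries in a column or a $2\times2$ submatrix both implicitly involve $\partial$. Once this bookkeeping is handled---as carried out in detail in \cite{CT} and \cite{MTV1}---the commutativity of $\B$ becomes a formal consequence of the generalized determinantal identities, while the commutation with $\h$ reduces to the transparent weight calculation presented above.
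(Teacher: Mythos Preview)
The paper does not prove this theorem at all: it is stated with a terminal \qed and attributed to \cite{CT} and \cite{MTV1}. Your proposal therefore goes beyond what the paper does, supplying an actual sketch of the argument from those references.

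Your $U(\h)$-commutation argument is complete and correct: the weight bookkeeping under the adjoint $e_{ii}$-action is exactly right, and the total weight of each monomial vanishes because $\sigma$ is a bijection. For the commutativity of $\B$ you correctly identify the Manin-matrix route due to Talalaev and developed in \cite{CT}, \cite{MTV1}; your observation that the diagonal scalar shift by $-\operatorname{diag}(K_1,\dots,K_N)$ preserves the Manin conditions, so that the $\bs K=0$ case implies the general one, is also accurate. As a sketch this is fine and faithful to the sources; just be aware that the column-commutator and cross-commutator verifications you flag as ``the main obstacle'' are genuinely where the work lies, and you have deferred that work to the references rather than carried it out---which is precisely what the paper itself does by citing them.
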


\subsection{$\B$-modules}
\label{Bmodules}
Let $\bs\La=(\bs\la^{(1)},\dots,\bs\la^{(k)})$ be a sequence of partitions
with at most $N$ parts and $b_1,\dots,b_k$ distinct complex numbers.

\sskip
Let $\>A_1(u),\dots,A_N(u)$ be the Laurent series in $u^{-1}$ obtained
by projecting coefficients of the series $B_i(u)\prod_{s=1}^k(u-b_s)^i$
to $\>\End\bigl(\otimes_{s=1}^kL_{\bs\la^{(s)}}(b_s)\bigr)$.

\sskip
The next lemma was proved in \cite{MTV2}. 

\begin{lem}
\label{Apol}
The series $\>A_1(u),\dots,A_N(u)$ are polynomials in\/ $u$. Moreover,
the operators $A_i(b_s)$, \,$i=1,\dots,N$, \,$s=1,\dots,k$, are proportional
to the identity operator, and
\be
\sum_{i=0}^N\,A_i(b_s)\prod_{j=0}^{N-i-1}(\al-j)\,=\,
\prod_{l=1}^N\,(\al-\la^{(s)}_l-N+l\>)\,,\qquad s=1,\dots,k\,,\kern-4em
\ee
where $\al$ is a variable and $\>A_0(u)=1$.
\end{lem}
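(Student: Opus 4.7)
The plan is to prove two claims in sequence: (i) each $A_i(u) = B_i(u)\prod_s(u-b_s)^i$ is polynomial in $u$, and (ii) each $A_i(b_s)$ is a scalar operator satisfying the stated identity, which can be read as the Fuchsian indicial polynomial of the projected $\D^\B$ at the singular point $b_s$. Both parts will follow from a pole analysis of $\D^\B$ at $u = b_s$, combined with the centrality of the coefficients of $Z(x)$ (Theorem \ref{Zcent}).

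For (i), I would expand $\D^\B$ directly from its row-determinant definition. On the tensor product each series $e_{jk}(u)$ acts as $\sum_s e_{jk}^{(s)}/(u-b_s)$, so every non-$\partial$ contribution to a matrix entry carries a simple pole at each $b_s$. To read off the coefficient of $\partial^{N-i}$, one must commute the remaining $\partial$'s to the right past rational functions; each commutation $\partial f \mapsto f\partial + f'$ trades one $\partial$ for an extra derivative on some $e$-term, raising its pole order by one. A direct bookkeeping argument shows the total pole order at $b_s$ is always exactly $i$: if $j$ of the $N$ entries initially contribute $\partial$ and $N-j$ contribute $e$-terms, producing $\partial^{N-i}$ requires $d = j - (N-i)$ derivative corrections, yielding pole order $(N-j) + d = i$. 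Hence $B_i(u)\prod_s(u-b_s)^i$ is polynomial in $u$.

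For (ii), I would isolate the leading singular part of $\D^\B$ at $u = b_s$. Writing $\epsilon = u - b_s$ and $e_{jk}(u) = e_{jk}^{(s)}/\epsilon + (\text{regular})$, the coefficient of $\epsilon^{-i}$ in $B_i(u)$ depends only on operators from the $s$-th tensor factor, while the other factors contribute the scalar $\prod_{s'\neq s}(b_s - b_{s'})^i$ coming from evaluating $\prod_s(u - b_s)^i$ at $u = b_s$. To identify the remaining operator on $L_{\bs\la^{(s)}}$ as a scalar, I would rescale by the Euler operator $D = \epsilon\partial$: each diagonal entry becomes $\epsilon^{-1}(D - e_{ii}^{(s)}) + (\text{regular})$ and each off-diagonal entry $-\epsilon^{-1}e_{ji}^{(s)} + (\text{regular})$; after moving all $\epsilon^{-1}$'s to the left via $[D,\epsilon^{-1}] = -\epsilon^{-1}$, the leading singular part of $\D^\B$ becomes $\epsilon^{-N}$ times a row determinant of the Capelli form \eqref{Zx} with $x$ replaced by $D$ plus constant shifts generated by the commutators. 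The result is thus a polynomial in the central generators of \eqref{Zx}, which by Theorem \ref{Zcent} act as scalars on the irreducible $L_{\bs\la^{(s)}}$; hence $A_i(b_s)$ is scalar, and its value is determined by \eqref{Zxv}, $Z(x) v_{\bs\la^{(s)}} = \prod_l(x - \la^{(s)}_l + l - 1) v_{\bs\la^{(s)}}$, after the appropriate shift in $x$ converts this into $\prod_l(\al - \la^{(s)}_l - N + l)$.

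The main obstacle is controlling the combinatorial shifts during the Euler rescaling: the successive commutators $[D,\epsilon^{-1}] = -\epsilon^{-1}$ applied row-by-row are what produce the falling factorials $\prod_{j=0}^{N-i-1}(\al - j)$ on the left-hand side of the claimed identity, and checking that the resulting constant shift matches the Harish-Chandra shift $+l-1$ inside $Z(x)$ is delicate. A safe cross-check is to compute $A_i(b_s)$ directly on the tensor-product highest weight vector $\otimes_s v_{\bs\la^{(s)}}$: raising operators among the off-diagonal entries annihilate it, so $\D^\B$ reduces on this vector to the scalar differential operator $\prod_{i=1}^N(\partial - K_i - c_i(u))$ with $c_i(u) = \sum_s \la^{(s)}_i/(u-b_s)$, whose indicial polynomial at $b_s$ can be read off directly and matched with the claimed formula.
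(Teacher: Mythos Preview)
Your approach is essentially the same as the paper's: both reduce the indicial computation at $b_s$ to the Capelli row determinant $Z(\al-N+1)$ sitting in the $s$-th tensor factor, and then invoke Theorem~\ref{Zcent} and formula~\Ref{Zxv} to extract the scalar $\prod_l(\al-\la^{(s)}_l-N+l)$. The paper packages this a bit more cleanly by first passing through the algebra homomorphism $\Uglnt\to(\Ugln)^{\otimes k}$, \,$g(u)\mapsto\sum_s g^{(s)}/(u-b_s)$, and proving the identity
\[
\sum_{i=0}^N\tilde A_i(b_s)\prod_{j=0}^{N-i-1}(\al-j)\,=\,
1^{\otimes(s-1)}\otimes Z(\al-N+1)\otimes 1^{\otimes(k-s)}
\]
once at the universal level; this replaces your module-side Euler rescaling and bookkeeping of contributions from the other factors by a single algebraic computation independent of the representation. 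Your pole-counting argument for part~(i) is more explicit than the paper's (which simply asserts polynomiality of $\tilde A_i$), and your proposed cross-check on the highest weight vector is exactly formula~\Ref{Zxv}. The extra scalar $\prod_{r\ne s}(b_s-b_r)^i$ you flagged is a genuine artifact of working with $A_i(u)=B_i(u)\prod_r(u-b_r)^i$ rather than the residue of $B_i$ at $b_s$; it is precisely this factor that reappears (in the form $\prod_{r\ne s}(b_s-b_r)^{n_r}$) when the lemma is applied in Section~\ref{iso sec} via the $\bat C_i$-normalization, so keep track of it carefully.
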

\begin{proof}
Consider the homomorphism $\Uglnt\to(\Ugln)^{\otimes k}$,
\vvn.3>
\beq
g(u)\,\mapsto\,\sum_{s=1}^k\,
\frac{1^{\otimes(s-1)}\otimes g\otimes 1^{\otimes(k-s)}}{u-b_s}\;,
\qquad g\in\gln\,.\kern-3em
\eeq
Let $\>\tilde A_i(u)$, \,${i=1,\dots,N}$, be the Laurent series in $u^{-1}$
obtained by projecting coefficients of the series
\vvn.1>
$B_i(u)\prod_{s=1}^k(u-b_s)^i$ to \>$(\Ugln)^{\otimes k}$.
The series $\>\tilde A_1(u),\dots,\alb\tilde A_N(u)$ are polynomials in \>$u$, 
and by a straightforward calculation
\vvn.3>
\be
\sum_{i=0}^N\,\tilde A_i(b_s)\prod_{j=0}^{N-i-1}(\al-j)\,=\,
1^{\otimes(s-1)}\otimes Z(\al-N+1)\otimes 1^{\otimes(k-s)}\,,
\qquad s=1,\dots,k\,,\kern-4em
\ee
where $\>\tilde A_0(u)=1$.
The lemma follows from Theorem~\ref{Zcent} and formula~\Ref{Zxv}.
\end{proof}

Let $\>C_i(u)$, \,${i=1,\dots,N}$, be the Laurent series in $u^{-1}$ obtained
by projecting coefficients of the series $B_i(u)\prod_{s=1}^n(u-z_s)$ to
$\>\End(\V^S)$.

\begin{lem} 
\label{capelli}
The series $\>C_1(u),\dots,C_N(u)$ are polynomials in\/ $u$.
\end{lem}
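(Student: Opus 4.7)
The plan is to verify polynomiality of $C_i(u)$ first on the cyclic vector $v_+^{\otimes n}$ of $\V^S$ (which exists by Lemma~\ref{cycl=symm}), and then extend to all of $\V^S$ via the cyclic structure.

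Since $v_+^{\otimes n}$ is a highest-weight vector of the $\glnt$-module $\V$---namely $e_{ij}(u)v_+^{\otimes n}=0$ for $i<j$ and $e_{ii}(u)v_+^{\otimes n}=\delta_{i1}\sum_s(u-z_s)^{-1}v_+^{\otimes n}$---only the identity permutation contributes to the row-determinant expansion of $\D^\B$ when applied to $v_+^{\otimes n}$: every other permutation $\sigma$ has some $\sigma(i)<i$, forcing the rightmost nontrivial entry $M_{i,\sigma(i)}=-e_{\sigma(i),i}(u)$ to act first as a raising operator and annihilate $v_+^{\otimes n}$. Reading off the coefficient of $\partial^{N-i}$ in the resulting scalar product $(\partial-K_1-\sum_s(u-z_s)^{-1})(\partial-K_2)\cdots(\partial-K_N)$ gives $B_i(u)v_+^{\otimes n}=\eta_i(u)v_+^{\otimes n}$, where $\eta_i(u)$ is the $i$-th elementary symmetric polynomial in $\alpha_1(u)=K_1+\sum_s(u-z_s)^{-1}$ and the constants $K_2,\ldots,K_N$ (no derivative corrections arise, since the only non-constant $\alpha_j$ is placed leftmost in the ordered product and the others commute with $\partial$). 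In each monomial of $\eta_i$ the factor $\alpha_1$ appears at most once, so $\eta_i(u)$ has only simple poles at $z_1,\ldots,z_n$; consequently $C_i(u)v_+^{\otimes n}=\eta_i(u)\prod_s(u-z_s)v_+^{\otimes n}$ is polynomial in $u$.

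To extend polynomiality from $v_+^{\otimes n}$ to all of $\V^S$, I write an arbitrary element as $w=\xi v_+^{\otimes n}$ with $\xi\in U(\glnt)$ and expand
$$C_i(u)\xi v_+^{\otimes n}=\xi\,C_i(u)v_+^{\otimes n}+\prod_{s=1}^n(u-z_s)\,[B_i(u),\xi]\,v_+^{\otimes n},$$
where the centrality of $\prod_s(u-z_s)$, guaranteed by Lemma~\ref{Uz}, is used to pull it past $\xi$. The first summand is polynomial by the computation above. For the second, I induct on the PBW-filtration degree of $\xi$, reducing to $\xi=g\otimes t^r$ with $g\in\gln$ and computing $[B_i(u),g\otimes t^r]v_+^{\otimes n}$ directly from the row-determinant formula for $\D^\B$; the pole analysis parallels Step~1 because commutators with $g\otimes t^r$ only replace certain $e_{ab}$-entries by new ones while preserving the polynomiality of the resulting Laurent expansion after multiplication by $\prod_s(u-z_s)$. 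That the higher-order polar contributions cancel in this analysis is an instance of the Capelli-type identity of Theorem~\ref{Zcent} and formula~\Ref{Zxv}: on the minuscule representation $V=L_{(1,0,\ldots,0)}$ that occupies each tensor factor, $Z(x)$ acts as a scalar polynomial of degree $N$, leaving no room for higher-order Casimir terms to develop poles of order $\geq 2$.

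The main obstacle is the inductive extension step, specifically the uniform control of $[B_i(u),g\otimes t^r]v_+^{\otimes n}$ across all $g\in\gln$ and $r\geq 0$. The conceptual reason the argument succeeds is that the Bethe algebra generators on $\V^S$ acquire their $z_s$-poles solely through the scalar function $\sum_s(u-z_s)^{-1}$ entering $\alpha_1$, and the minuscule nature of the factor $V$ precludes any additional higher-order polar contributions; the central factor $\prod_s(u-z_s)$ is then exactly what is needed to cancel the surviving simple poles.
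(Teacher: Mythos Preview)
Your Step 1 is correct: on the highest-weight vector $v_+^{\otimes n}$ only the identity permutation survives in the row determinant, and the resulting scalar $\eta_i(u)$ has at most simple poles at the~$z_s$. The gap is entirely in Step~2. The induction you sketch does not close: writing $\xi=(g\otimes t^r)\,\xi'$ and expanding the commutator as $[B_i(u),g\otimes t^r]\,\xi'+(g\otimes t^r)\,[B_i(u),\xi']$, the second summand is controlled by induction, but the first requires that $\prod_s(u-z_s)\,[B_i(u),g\otimes t^r]$ act polynomially on \emph{every} vector $\xi' v_+^{\otimes n}$, not just on $v_+^{\otimes n}$ itself. That is essentially the operator statement you set out to prove. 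Even for the base case $\xi=g\otimes t^r$ applied to $v_+^{\otimes n}$, your claim that ``the pole analysis parallels Step~1'' is not justified: in Step~1 the off-diagonal entries of the matrix never act, so no products of two factors $e_{ab}(u)$ occur; once you commute with $g\otimes t^r$ the off-diagonal terms do contribute, producing expressions such as $e_{cd}(u)\cdot\sum_s z_s^{\,r}(u-z_s)^{-1}h^{(s)}$ with genuine double poles at~$z_s$. That these double poles cancel among all the row-determinant terms is exactly the Capelli identity, and it does not follow from Theorem~\ref{Zcent} or formula~\Ref{Zxv}, which concern the polynomial $Z(x)$ in $U(\gln)$ rather than the series $B_i(u)$ in $U(\glnt)$ acting on~$\V$.

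The paper's own proof is one line: it cites Theorem~2.1 of \cite{MTV3}, which establishes precisely this Capelli-type identity for $\D^\B$ on $\V$, expressing it in a form whose coefficients are visibly polynomial after multiplication by $\prod_s(u-z_s)$. Your intuition in the last paragraph---that minusculeness of $V$ forbids higher-order poles---is the right heuristic, and for $N=2$ one can check by hand that the second-order pole of $B_2(u)$ at $z_s$ has coefficient $e_{22}^{(s)}+e_{11}^{(s)}e_{22}^{(s)}-e_{21}^{(s)}e_{12}^{(s)}$, which vanishes on $V$; but turning this into a proof for all $N$ and all~$i$ requires the systematic identity of \cite{MTV3}, which your sketch does not reproduce.
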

\begin{proof}
The statement is a corollary of Theorem~2.1 in \cite{MTV3}.
\end{proof}

Set $n_s=|\bs\la^{(s)}|$, \,$s=1,\dots,k$. Let $\>\bat C_i(u)$,
\,${i=1,\dots,N}$, be the Laurent series in $u^{-1}$ obtained by
projecting coefficients of the series $B_i(u)\prod_{s=1}^k(u-b_s)^{n_s}$
to $\>\End\bigl(\otimes_{s=1}^k W_{n_s}(b_s)\bigr)$.

\begin{cor} 
\label{Cpol}
The series $\>\bat C_1(u),\dots,\bat C_N(u)$ are polynomials in\/ $u$.
\end{cor}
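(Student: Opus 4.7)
The plan is to deduce this directly from Lemma~\ref{capelli} and Lemma~\ref{factor=weyl} by passing to the quotient $\V^S/I^\V_{\bs a}$, where $\bs a=(a_1,\dots,a_n)$ is chosen so that the relation \Ref{ab} holds with the prescribed $\bs b$ and $\bs n$. The point is that on this quotient the operator-valued polynomial $\prod_{s=1}^n(u-z_s)$ collapses to the scalar polynomial $\prod_{s=1}^k(u-b_s)^{n_s}$, and the assertion is essentially the transport of Lemma~\ref{capelli} under the $\glnt$-module isomorphism of Lemma~\ref{factor=weyl}.

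First I would expand
\begin{equation*}
\prod_{s=1}^n(u-z_s)\,=\,u^n+\sum_{j=1}^n(-1)^j\,\sigma_j(\bs z)\,u^{n-j}\,,
\end{equation*}
where the $\sigma_j(\bs z)$ act on $\V^S$ by multiplication. Since $I^\V_{\bs a}=I_{\bs a}\V^S$ with $I_{\bs a}$ generated by $\sigma_j(\bs z)-a_j$, the operator $\sigma_j(\bs z)$ acts on the quotient $\V^S/I^\V_{\bs a}$ as the scalar $a_j$. Combined with \Ref{ab}, this means that modulo $I^\V_{\bs a}$ the series $B_i(u)\prod_{s=1}^n(u-z_s)$ and $B_i(u)\prod_{s=1}^k(u-b_s)^{n_s}$ induce the same operator-valued series on $\V^S/I^\V_{\bs a}$.

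Next, by Lemma~\ref{capelli} the coefficients of $B_i(u)\prod_{s=1}^n(u-z_s)$ act on $\V^S$ as a polynomial in $u$; pushing forward to the quotient preserves polynomiality, so the induced series on $\V^S/I^\V_{\bs a}$ is a polynomial in $u$. By Lemma~\ref{factor=weyl} the $\glnt$-modules $\V^S/I^\V_{\bs a}$ and $\otimes_{s=1}^k W_{n_s}(b_s)$ are isomorphic, and the Bethe algebra acts through its image in $\End$ of either side; transporting polynomiality along this isomorphism yields that $\bat C_i(u)$ is a polynomial in $u$ on $\otimes_{s=1}^k W_{n_s}(b_s)$.

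There is no real obstacle here: the argument is bookkeeping that identifies the scalar factor $\prod_s(u-b_s)^{n_s}$ with the image of the operator factor $\prod_s(u-z_s)$ under the quotient map. The only point to check carefully is that this identification is compatible with the $\Uglnt$-action, i.e.\ that multiplying $B_i(u)$ on the right by the central (under the $\glnt$-action on $\V^S$) multiplication operators $\sigma_j(\bs z)$ commutes with reducing modulo the $\glnt$-submodule $I^\V_{\bs a}$, which follows because $I_{\bs a}\subset\C[z_1,\dots,z_n]^S$ consists of operators that commute with the $\glnt$-action on $\V^S$ by Lemma~\ref{Uz}.
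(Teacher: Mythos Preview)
Your argument is correct and is precisely the reasoning the paper has in mind: its one-line proof cites exactly Lemmas~\ref{factor=weyl} and~\ref{capelli}, and you have supplied the details of how the operator factor $\prod_{s=1}^n(u-z_s)$ becomes the scalar $\prod_{s=1}^k(u-b_s)^{n_s}$ on the quotient $\V^S/I^\V_{\bs a}$ and how polynomiality is then transported along the isomorphism.
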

\begin{proof}
The claim follows from Lemmas~\ref{factor=weyl} and~\ref{capelli}.
\end{proof}

\begin{cor} 
\label{Apol2}
The products\/ $A_i(u)\prod_{s=1}^k(u-b_s)^{n_s-i}$, $\,i=1,\dots,k$,
are polynomials in\/ $u$.
\end{cor}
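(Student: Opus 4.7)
The plan is to transfer polynomiality from the Weyl module to the irreducible tensor product by exploiting the subquotient relationship between them. Set $L:=\otimes_{s=1}^k L_{\bs\la^{(s)}}(b_s)$ and $W:=\otimes_{s=1}^k W_{n_s}(b_s)$. By Corollary~\ref{weylbk}, $L$ is an irreducible subquotient of $W$, so there exist $\glnt$-submodules $W''\subset W'\subset W$ with $W'/W''\simeq L$. Being $\glnt$-invariant, both $W'$ and $W''$ are invariant under the Bethe subalgebra $\B\subset U(\glnt)$, and hence any $\End(W)$-valued Laurent series built from $\B$ descends to a well-defined $\End(L)$-valued series on the subquotient.

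The main step is then automatic: by Corollary~\ref{Cpol}, the series
$$B_i(u)\prod_{s=1}^k(u-b_s)^{n_s}\ =\ \bat C_i(u)$$
is a polynomial in $u$ when acting on $W$. Polynomiality is obviously inherited by subquotients, so the induced operator-valued series on $L$ is also a polynomial in $u$. On the other hand, by the very definition of $A_i(u)$ (Lemma~\ref{Apol}), the series $B_i(u)\prod_{s=1}^k(u-b_s)^i$ acts on $L$ as the polynomial $A_i(u)$. Dividing this identity and multiplying by $\prod_s(u-b_s)^{n_s}$, we obtain, as a formal equality of $\End(L)$-valued rational functions,
$$A_i(u)\prod_{s=1}^k(u-b_s)^{n_s-i}\ =\ B_i(u)\prod_{s=1}^k(u-b_s)^{n_s},$$
whose right-hand side is a polynomial by the previous sentence; hence so is the left-hand side, which is the desired claim.

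No serious obstacle is anticipated. The only point that requires a moment of care is the passage from a polynomial operator-valued series on $W$ to one on the subquotient $L$; this is routine once one notes that the $\B$-action (in particular the action of each coefficient of $B_i(u)$) on $L$ is, by definition, the one inherited from $W'$ modulo $W''$.
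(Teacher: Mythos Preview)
Your argument is correct and is exactly the one the paper has in mind: it invokes Lemma~\ref{weylbk} to realize $\otimes_s L_{\bs\la^{(s)}}(b_s)$ as a $\glnt$-subquotient of $\otimes_s W_{n_s}(b_s)$, then passes the polynomiality statement of Corollary~\ref{Cpol} through that subquotient to obtain the claim. The only cosmetic point is that the phrase ``dividing this identity and multiplying by $\prod_s(u-b_s)^{n_s}$'' should simply read ``multiplying by $\prod_s(u-b_s)^{n_s-i}$''.
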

\begin{proof}
The claim follows from Lemma~\ref{weylbk} and Corollary~\ref{Cpol}.
\end{proof}

Let $M$ be a $\glnt$-module.
As a subalgebra of $\Uglnt$, the algebra $\B$ acts on $M$.
If $H\subset M$ is a $\B$-invariant subspace,
then we call the image of $\B$
in $\End(H)$
the {\it Bethe algebra associated with $H$\/}. Since $\B$ commutes
with $U(\h)$,
it preserves the weight subspaces $(M)_{\bs\la}$.

In what follows we study the action of the Bethe algebra $\B$ on the following
$\B$-modules: \
$
(\V^S)_{\bs\la}\,,
{}\
(\otimes_{s=1}^k W_{n_s}(b_s))_{\bs\la}
\,,
{}\
(\otimes_{s=1}^kL_{\bs\la^{(s)}}(b_s))_{\bs\la}\, .
$

\section{Spaces of quasi-exponentials and Wronski map}
\label{Spaces of quasi-exponentials and Wronski map}

\subsection{Spaces of quasi-exponentials}
\label{Spaces of quasi-exponentials}

Let $\bs K=(K_1,\dots,K_N)$ be a sequence of distinct complex numbers.
Let $\bs\la$ be a partition of $n$ with at most $N$ parts.
Let $\Om_{\bs\la}$ be the affine $n$-dimensional space with coordinates
$f_{ij},\,i=1,\dots,N,\,j=1,\dots,\la_i$.

Introduce
\beq
\label{basis}
f_i(u)\,=\,e^{K_iu}\,(u^{\la_i} + f_{i1}u^{\la_i-1} + \dots +f_{i\la_i})\,,
\qquad i=1,\dots,N\,.
\eeq
We identify points \>$X\in\Om_{\bs\la}$ with $N$-dimensional complex
vector spaces generated by quasi-exponentials
\beq
\label{Def of space}
f_i(u,X)\,=\,e^{K_iu}\,(u^{\la_1}+f_{i1}(X)u^{\la_1-1}+\dots+
f_{i\la_1}(X))\,,
\qquad i=1,\dots,N\,.
\eeq

\sskip
Denote by $\mc O_{\bs\la}$ the algebra of regular functions on
$\Om_{\bs\la}$. It is the polynomial algebra in the variables $f_{ij}$.
Define a grading on $\O_{\bs\la}$ such that the degree of the generator
$f_{ij}$ equals $j$ for all $(i,j)$.

\begin{lem}
\label{char O}
The graded character of $\O_{\bs\la}$ is given by the formula
\vvn-.2>
\be
\ch_{\mc O_{\bs\la}}(q)\,=\,{\prod_{i=1}^N\frac1{(q)_{\la_i}}}\;.
\vv-1.4>
\ee
\qed
\end{lem}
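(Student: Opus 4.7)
The plan is essentially a direct computation from the definitions, so the proof proposal is short.

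By definition, $\mc O_{\bs\la}$ is the polynomial algebra $\C[f_{ij} : 1\le i\le N,\ 1\le j\le \la_i]$, where the grading assigns $\deg f_{ij} = j$. Since the generators are algebraically independent and the grading is additive on products, the graded character factors as a product over the generators.

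For a polynomial algebra in one variable $x$ of degree $d$, the graded character is $1/(1-q^d)$. Applying this separately to each generator, I get
\[
\ch_{\mc O_{\bs\la}}(q)\,=\,\prod_{i=1}^N\,\prod_{j=1}^{\la_i}\,\frac{1}{1-q^j}\,.
\]
For each fixed $i$, the inner product is exactly $1/(q)_{\la_i}$ by the definition $(q)_a=\prod_{j=1}^a(1-q^j)$. Hence
\[
\ch_{\mc O_{\bs\la}}(q)\,=\,\prod_{i=1}^N\,\frac{1}{(q)_{\la_i}}\,,
\]
which is the claim. There is no real obstacle here; the only thing to verify is that the grading is well-defined and consistent with the polynomial ring structure, which is immediate from the definition $\deg f_{ij} = j$. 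I would include a one-line remark that this character coincides with the graded character of $(\V^S)_{\bs\la}$ computed in Lemma \ref{lem on char of VSl}, since this matching is presumably the point of introducing this grading and will be used later in Section~\ref{iso sec} to compare $\O_{\bs\la}$ with the Bethe algebra module.
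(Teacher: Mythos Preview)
Your proof is correct and matches the paper's approach: the paper states the lemma with an immediate \qed and no written argument, since the result follows directly from the definition of $\O_{\bs\la}$ as the polynomial algebra in the $f_{ij}$ with $\deg f_{ij}=j$. Your remark about the match with Lemma~\ref{lem on char of VSl} is apt; that comparison is indeed used in the proof of Theorem~\ref{first1}.
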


\subsection{Another realization of $\O_{\bs\la}$}
For arbitrary
functions $g_1(u),\dots,g_N(u)$, introduce the Wronskian by the formula
\vvn-.3>
\be
\Wr(g_1(u),\dots,g_N(u))\,=\,
\det\left(\begin{matrix} g_1(u) & g_1'(u) &\dots & g_1^{(N-1)}(u) \\
g_2(u) & g_2'(u) &\dots & g_2^{(N-1)}(u) \\ \dots & \dots &\dots & \dots \\
g_N(u) & g_N'(u) &\dots & g_N^{(N-1)}(u)
\end{matrix}\right).
\ee
Let $f_i(u)$, $i=1,\dots,N$, be the generating functions given by \Ref{basis}.
We have
\beq
\label{Wr coef}
\Wr(f_1(u),\dots,f_N(u))\,=\,
e^{\>\sum_{i=1}^N K_iu}\!\prod_{1\le i<j\le N}(K_j-K_i)
\ \Bigl(u^n+\sum_{s=1}^n (-1)^s\>\Sig_s\,u^{n-s}\Bigr)\,,
\eeq
where $\Sig_1,\dots,\Sig_n$ are elements of $\O_{\bs\la}$.
Define the differential operator $\D^\O_{\bs\la}$ by
\vvn.2>
\beq
\label{DOla}
\D^\O_{\bs\la}=\,\frac{1}{\Wr(f_1(u),\dots,f_N(u))}\,\rdet
\left(\begin{matrix} f_1(u) & f_1'(u) &\dots & f_1^{(N)}(u) \\
f_2(u) & f_2'(u) &\dots & f_2^{(N)}(u) \\ \dots & \dots &\dots & \dots \\
1 & \der &\dots & \der^N
\end{matrix}\right).
\eeq
It is a differential operator in the variable $u$, whose coefficients are
formal power series in $u^{-1}$ with coefficients in $\O_{\bs\la}$,
\vvn-.4>
\beq
\label{DO}
\D^\O_{\bs\la}=\,\der^N+\sum_{i=1}^N\,F_i(u)\,\der^{N-i}\>,
\vv-.4>
\eeq
where
\vvn-.3>
\beq
\label{Fi}
F_i(u)\,=\,\sum_{j=0}^\infty F_{ij}\>u^{-j}\,,
\eeq
and $F_{ij}\in\O_{\bs\la}$, \,$i=1,\dots,N$, \,$j\geq 0$.

\sskip
Define the {\it characteristic polynomial of the operator $\D^{\O\/}_{\bs\la}$
at infinity} by
\beq
\label{char at infty}
\chi(\al)\,=\,\sum_{i=0}^{N} F_{i0}\,\al^{N-i}\,,
\eeq
where $\al$ is a variable and $F_{00}=1$. 

\begin{lem}
\label{chi}
We have
\vvn-.8>
\begin{align*}
\chi(\al)\, &{}=\,\prod_{i=1}^N\,(\al-K_i)\,,
\\[4pt]
\sum_{i=1}^N F_{i1}\,\al^{N-i}\, &{}=\,
\sum_{i=1}^N\,\la_i\,\prod_{\satop{j=1}{j\ne i}}^N\,(\al-K_j)\,.
\end{align*}
\end{lem}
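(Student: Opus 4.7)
The plan is to exploit the defining property of $\D^\O_{\bs\la}$: it is the unique monic order-$N$ differential operator in $\der$ annihilating $f_1(u),\dots,f_N(u)$. First I would write $f_i(u)=e^{K_i u}q_i(u)$ with $q_i(u)=u^{\la_i}+f_{i1}u^{\la_i-1}+\dots+f_{i\la_i}$ a polynomial of degree $\la_i$. Conjugation by $e^{K_i u}$ converts $\D^\O_{\bs\la} f_i=0$ into
\[
(\der+K_i)^N q_i(u)\,+\,\sum_{j=1}^N F_j(u)\,(\der+K_i)^{N-j} q_i(u)\,=\,0,\qquad i=1,\dots,N.
\]
I would then extract both claimed formulas by matching the top two orders of this identity as $u\to\infty$, using the expansion $(\der+K_i)^m q_i(u)=K_i^m u^{\la_i}+\bigl(K_i^m f_{i1}+m\la_i K_i^{m-1}\bigr)u^{\la_i-1}+O(u^{\la_i-2})$, which follows because each $\der$ lowers the polynomial degree by one.

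For the first formula, matching the coefficient of $u^{\la_i}$ picks out only the leading constants $F_{j0}$ and yields $\chi(K_i)=\sum_{j=0}^N F_{j0}K_i^{N-j}=0$ for every $i$. Since $\chi(\al)$ is monic of degree $N$ (because $F_{00}=1$) with the $N$ distinct roots $K_1,\dots,K_N$, this forces $\chi(\al)=\prod_i(\al-K_i)$.

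For the second formula, matching the coefficient of $u^{\la_i-1}$ produces contributions from $F_{j0}$ paired with the subleading piece of $(\der+K_i)^{N-j}q_i$ and from $F_{j1}u^{-1}$ paired with its leading term $K_i^{N-j}u^{\la_i}$; higher Laurent coefficients $F_{jk}$ with $k\ge 2$ contribute only at orders $u^{\la_i-2}$ and below. The $f_{i1}$-contributions reassemble into $f_{i1}\chi(K_i)=0$ by the first step, leaving the $N$ linear equations
\[
\sum_{j=1}^N F_{j1}\,K_i^{N-j}\,=\,-\,\la_i\,\chi'(K_i)\,=\,-\,\la_i\prod_{l\ne i}(K_i-K_l).
\]
Since $\sum_{j=1}^N F_{j1}\al^{N-j}$ is a polynomial of degree at most $N-1$, Lagrange interpolation at the $N$ distinct points $K_i$ then recovers the claimed identity. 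The main obstacle is essentially bookkeeping: verifying that no higher Laurent coefficient $F_{jk}$ pollutes the $u^{\la_i-1}$ order and that the subleading expansion of $(\der+K_i)^m q_i$ is exactly as stated. Once this is in place, the lemma reduces cleanly to two polynomial identities determined by their values at $K_1,\dots,K_N$.
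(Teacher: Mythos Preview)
Your approach is exactly the paper's: it too proves the lemma by using $\D^\O_{\bs\la}f_i=0$, multiplying by $e^{-K_iu}$, and reading off the coefficients of $u^{\la_i}$ and $u^{\la_i-1}$. One remark: the relation you correctly derive, $\sum_j F_{j1}K_i^{N-j}=-\la_i\chi'(K_i)$, carries a minus sign absent from the stated second formula --- this is a sign slip in the statement itself (already visible for $N=1$, where $F_{11}=-\la_1$), so your argument actually yields the corrected identity rather than the printed one.
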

\begin{proof}
We have $\D^\O_{\bs\la}f_i(u)=0$ for all $i=1,\dots,N$. Taking
the coefficient of $u^{\la_i}$ of the series $e^{-K_iu}\,\D^\O_{\bs\la}f_i(u)$,
we get $\chi(K_i)=0$, for all $i=1,\dots,N$. This implies the first equality.
The second equality follows similarly from considering the coefficient of
$u^{\la_i-1}$ of the series $e^{-K_iu}\,\D^\O_{\bs\la}f_i(u)$.
\end{proof}

\begin{lem}
\label{coef alg}
The functions $F_{ij}\in\O_{\bs\la}$, $i=1,\dots,N$,
$j\geq 0$, generate the algebra $\O_{\bs\la}$.
\end{lem}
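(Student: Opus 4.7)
I would prove the lemma by recursively expressing each generator $f_{ij}$ of $\O_{\bs\la}$ as a polynomial in the $F_{i'j'}$'s, using the equations $\D^\O_{\bs\la} f_i(u) = 0$. These identities hold by construction: substituting any $f_i$ into the last row of the matrix in \eqref{DOla} produces two equal rows in the row determinant. Writing $f_i(u) = e^{K_i u} g_i(u)$ with $g_i(u) = \sum_{m=0}^{\la_i} f_{im} u^{\la_i - m}$ (and $f_{i0} = 1$), and using the conjugation $\der^k(e^{K_i u} h) = e^{K_i u}(\der + K_i)^k h$, the identity $\D^\O_{\bs\la} f_i = 0$ becomes
\[ \sum_{i' = 0}^N F_{i'}(u)\,(\der + K_i)^{N - i'} g_i(u) \,=\,0, \qquad F_0(u)\,=\,1. \]

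Next I would expand using $F_{i'}(u) = \sum_{j' \geq 0} F_{i'j'} u^{-j'}$ and the binomial expansion of $(\der + K_i)^{N - i'}$ applied to each monomial in $g_i$, and equate the coefficient of $u^{\la_i - k}$ to zero for each $k \geq 0$. Explicitly this coefficient is
\[ \sum_{\satop{m,l,j' \geq 0}{m + l + j' = k}}\,\sum_{i' = 0}^N F_{i'j'}\,\binom{N-i'}{l}\,K_i^{N-i'-l}\,\frac{(\la_i - m)!}{(\la_i - m - l)!}\, f_{im}. \]
For $k \le 1$ this vanishing is equivalent to the two scalar identities of Lemma \ref{chi} and conveys no information about the $f_{im}$. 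For $k \ge 2$ I would isolate the coefficient of $f_{i,k-1}$: only the contributions with $m = k-1$ (so $(l, j') \in \{(1, 0),\,(0, 1)\}$) produce $f_{i,k-1}$, and they sum to
\[ (\la_i - k + 1)\,\chi'(K_i) \,+\, \sum_{i' = 1}^N F_{i'1}\,K_i^{N-i'}. \]
By the second identity of Lemma \ref{chi} evaluated at $\al = K_i$, only the $i$-th summand on the right of that identity survives, and the displayed expression simplifies to a nonzero scalar multiple of $\chi'(K_i) = \prod_{j \ne i}(K_i - K_j)$, which is nonzero because the $K_j$ are distinct. Hence the equation can be solved for $f_{i, k-1}$ as a polynomial in the $F_{i'j'}$'s and the previously determined $f_{i l}$ with $l < k - 1$.

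Iterating this for $k = 2, 3, \dots, \la_i + 1$ expresses $f_{i1}, \dots, f_{i\la_i}$ as polynomials in the $F_{i'j'}$'s; doing so for each $i = 1, \dots, N$ shows that the $F_{i'j'}$ generate $\O_{\bs\la}$, proving the lemma. The main obstacle I expect is the coefficient bookkeeping in the triple sum — verifying that only the two stated $(l, j')$ pairs contribute to the coefficient of $f_{i, k-1}$ and that after invoking Lemma \ref{chi} the resulting scalar does not accidentally vanish; the distinctness of the $K_i$ is used precisely to ensure $\chi'(K_i) \ne 0$.
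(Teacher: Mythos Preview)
Your proposal is correct and follows essentially the same approach as the paper: expand $e^{-K_iu}\D^\O_{\bs\la}f_i(u)$ as a formal series, read off the coefficient of $u^{\la_i-j-1}$, observe that the contribution from $f_{i,j+1}$ vanishes because $\chi(K_i)=0$, and that the coefficient of $f_{ij}$ is a nonzero scalar multiple of $\prod_{l\ne i}(K_i-K_l)$, so one can solve recursively for $f_{ij}$ in terms of the $F_{lr}$ and the already-determined $f_{is}$, $s<j$. Your write-up is in fact more explicit than the paper's, which simply asserts the shape~\Ref{fij} of the coefficient without deriving it; your use of Lemma~\ref{chi} to check that the leading scalar is nonzero is exactly what justifies the paper's claim that the recursion goes through.
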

\begin{proof}
The coefficient of $u^{\la_i-j-1}$ of the series
$e^{-K_iu}\,\D^\O_{\bs\la}f_i(u)$ has the form
\vvn.3>
\beq
\label{fij}
-\>j\,f_{ij}\,\prod_{\satop{j=1}{j\ne i}}^N\,(K_i-K_j)\,+\,\sum_{l=1}^N\,
\biggl(\>\sum_{r=0}^1\,\sum_{s=0}^{j-1}\,c_{ijlrs}\>F_{lr}\>f_{is}\,+\,
\sum_{r=2}^j\,\sum_{s=0}^{j-r+1}\,c_{ijlrs}\>F_{lr}\>f_{is}\biggr)\,,
\vv.1>
\eeq
where $c_{ijlrs}$ are some numbers. Since $\D^\O_{\bs\la}f_i(u)=0$, we can
express recursively the elements $f_{ij}$ via the elements $F_{lr}$ starting
with $j=1$ and then increasing the second index $j$.
\end{proof}

\subsection{Frobenius algebras, cf. \cite{MTV6} }
\label{comalg}
In this section, we recall some simple facts from commutative algebra. The word
{\it algebra\/} will stand for an associative unital algebra over $\C$.

Let $A$ be a commutative algebra. The algebra $A$ considered as an $A$-module
is called the {\it regular representation \/} of $A$. The dual space $A^*$ is
naturally an $A$-module, which is called the {\it coregular representation\/}.

Clearly, the image of $A$ in $\End(A)$ for the regular representation is
a maximal commutative subalgebra. If $A$ is finite-dimensional, then the image
of $A$ in $\End(A^*)$ for the coregular representation is a maximal commutative
subalgebra as well.

If $M$ is an $A$-module and $v\in M$ is an eigenvector of the $A$-action on $M$
with eigenvalue $\xi_v\in A^*$, that is, $av=\xi_v(a)\>v$ \;for any $a\in A$,
then $\xi_v$ is a character of $A$, that is, $\xi_v(ab)=\xi_v(a)\>\xi_v(b)$.

If an element $v\in A^*$ is an eigenvector of the coregular action of $A$,
then $v$ is proportional to the character $\xi_v$. Moreover, each character
$\xi\in A^*$ is an eigenvector of the coregular action of $A$ and
the corresponding eigenvalue equals $\xi$.

A nonzero element $\xi\in A^*$ is proportional to a character if and only if
$\,\ker\xi\subset A$ is an ideal. Clearly, $A/\ker\xi\simeq\C$. On the other
hand, if $\m\subset A$ is an ideal such that $A/\m\simeq\C$, then $\m$ is
a maximal proper ideal and $\m=\ker\zeta$ for some character $\zeta$.

A commutative algebra $A$ is called {\it local\/} if it has a unique ideal $\m$
such that $A/\m\simeq\C$. In other words, a commutative algebra $A$ is local
if it has a unique character. It is easy to see that any proper ideal of
the local algebra $A$ is contained in the ideal $\m$.

It is known that any finite-dimensional commutative algebra $A$ is isomorphic
to a direct sum of local algebras, and the local summands are in bijection
with characters of $A$.

Let $A$ be a commutative algebra. A bilinear form $(\,{,}\,):A\otimes A\to\C$
is called {\it invariant\/} if $(ab,c)=(a,bc)$ for all $a,b,c\in A$.

A finite-dimensional commutative algebra $A$ which admits an invariant
nondegenerate symmetric bilinear form ${(\,{,}\,):A\otimes A\to\C}$ is called
a {\it Frobenius algebra\/}. It is easy to see that distinct local summands of
a Frobenius algebra are orthogonal.

The following properties of Frobenius algebras will be useful.

\begin{lem} [\cite{MTV6}]
\label{direct}
A finite direct sum of Frobenius algebras is a Frobenius algebra.
\qed
\end{lem}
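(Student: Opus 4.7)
The plan is to induct on the number of summands, which reduces the lemma to the case of two Frobenius algebras. So suppose $A_1$ and $A_2$ are Frobenius, with nondegenerate invariant symmetric bilinear forms $(\,{,}\,)_1$ on $A_1$ and $(\,{,}\,)_2$ on $A_2$. The natural candidate for a form on the direct sum $A = A_1 \oplus A_2$ is the one obtained by summing the two forms on the respective summands and declaring the summands orthogonal to one another; that is, for $a_i,b_i \in A_i$, set
$$\bigl((a_1,a_2),(b_1,b_2)\bigr)\,=\,(a_1,b_1)_1 + (a_2,b_2)_2.$$

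The next step is to verify the three required properties. Symmetry is inherited componentwise from $(\,{,}\,)_1$ and $(\,{,}\,)_2$. For nondegeneracy, suppose $((a_1,a_2),(b_1,b_2))=0$ for every $(b_1,b_2)\in A$; specializing to $b_2=0$ yields $(a_1,b_1)_1=0$ for all $b_1\in A_1$, hence $a_1=0$ by nondegeneracy of $(\,{,}\,)_1$, and symmetrically $a_2=0$. For invariance, multiplication in the direct sum of algebras is componentwise, $(a_1,a_2)(b_1,b_2)=(a_1b_1,a_2b_2)$, so the invariance identity for $A$ splits as the sum of the invariance identities for $A_1$ and $A_2$ and is therefore automatic.

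Having handled two summands, the general statement follows by induction on $k$: for $A=A_1\oplus\cdots\oplus A_k$, regard $A$ as $(A_1\oplus\cdots\oplus A_{k-1})\oplus A_k$, use the inductive hypothesis to endow $A_1\oplus\cdots\oplus A_{k-1}$ with a Frobenius structure, and apply the two-summand case. There is no genuine obstacle here; the only thing to be careful about is that the direct sum is taken in the ring-theoretic sense, with componentwise multiplication and unit $(1_{A_1},\dots,1_{A_k})$, which is exactly what makes the invariance calculation split across summands.
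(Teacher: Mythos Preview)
Your argument is correct. The paper itself does not give a proof of this lemma; it states the result with a \texttt{\textbackslash qed} and a citation to \cite{MTV6}, so there is no ``paper's own proof'' to compare against. Your orthogonal-direct-sum construction is the standard one, and the verification of symmetry, nondegeneracy, and invariance componentwise is exactly what is needed.
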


Let $A$ be a Frobenius algebra. Let $I\subset A$ be a subspace.
Denote by $I^\perp\subset A$ the orthogonal complement to $I$.
Then $\dim I+\dim I^\perp=\dim A$, and the subspace $I$ is an ideal
if and only if $I^\perp$ is an ideal.

Let $A_0$ be a local Frobenius algebra with maximal ideal $\m\subset A_0$.
Then $\m^\perp$ is a one-dimen\-sional ideal. Let $m^\perp\in\m^\perp$ be
an element such that $(1,m^\perp)=1$.

\begin{lem}[\cite{MTV6}]
\label{inverse}
Any nonzero ideal $I\subset A_0$ contains $\m^\perp$.
\qed
\end{lem}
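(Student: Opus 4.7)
The plan is to exploit the duality between ideals provided by the nondegenerate invariant form, together with the defining property of a local algebra that every proper ideal is contained in the unique maximal ideal $\m$.

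First, I would recall the general facts that were just established for a Frobenius algebra $A$: the invariant form is nondegenerate, so for any subspace $J\subset A$ one has $(J^\perp)^\perp=J$ and $\dim J+\dim J^\perp=\dim A$; moreover, as noted immediately before the statement, $J$ is an ideal if and only if $J^\perp$ is an ideal. I would apply all of this inside $A_0$.

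Now let $I\subset A_0$ be a nonzero ideal. I would pass to $I^\perp$. Because the form is nondegenerate and $I\ne 0$, one has $\dim I^\perp=\dim A_0-\dim I<\dim A_0$, so $I^\perp$ is a proper subspace. By the ideal/orthogonal-ideal correspondence, $I^\perp$ is a proper ideal of $A_0$. Since $A_0$ is local with unique maximal ideal $\m$, every proper ideal is contained in $\m$, hence $I^\perp\subset\m$. Taking orthogonal complements reverses inclusions, so
\[
I\,=\,(I^\perp)^\perp\,\supset\,\m^\perp,
\]
which is exactly the claim.

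There is no real obstacle here; the argument is a two-line application of Frobenius duality once one realizes that $I\mapsto I^\perp$ is an inclusion-reversing bijection on the lattice of ideals. The only points to keep explicit are that $I^\perp$ is proper (which uses $I\ne 0$ and nondegeneracy) and that locality forces every proper ideal into $\m$ — both of which are immediate from the setup recalled above.
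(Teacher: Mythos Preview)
Your argument is correct: passing to $I^\perp$, using that it is a proper ideal and hence contained in $\m$, and then taking orthogonal complements back is exactly the standard proof. The paper itself does not supply a proof of this lemma (it is cited from \cite{MTV6} with only a \qed), so there is nothing further to compare against; your reasoning uses precisely the facts about Frobenius and local algebras recalled in the surrounding text.
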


For a subset $I\subset A$ define its annihilator as
$\Ann\,I\,=\,\{a\in A,\ |\ aI=0\}$. The annihilator $\Ann I$ is an ideal.

\begin{lem}[\cite{MTV6}]
\label{Ann=perp}
Let $A$ be a Frobenius algebra and $I\subset A$ an ideal.
Then $\Ann I=I^\perp$. In particular, $\dim I+\dim\>\Ann I=\dim A$.
\qed
\end{lem}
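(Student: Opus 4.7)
The plan is to prove the two inclusions $\Ann I\subseteq I^\perp$ and $I^\perp\subseteq\Ann I$ directly, using only the invariance property $(ab,c)=(a,bc)$ of the form and its nondegeneracy. The dimension formula will then follow immediately from the general fact that for a nondegenerate bilinear form on a finite-dimensional space one has $\dim I+\dim I^\perp=\dim A$, which was already recorded just before the statement of Lemma~\ref{inverse}.

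For the first inclusion, I would take $a\in\Ann I$ and any $b\in I$; then $ab=0$, and invariance gives $(a,b)=(a\cdot 1,b)=(1,ab)=(1,0)=0$, so $a\in I^\perp$. For the converse, I would argue by contradiction: take $a\in I^\perp$ and suppose there exists $b\in I$ with $ab\neq 0$. By nondegeneracy of $(\,{,}\,)$ there is some $c\in A$ with $(ab,c)\ne 0$. But invariance rewrites this as $(a,bc)$, and since $I$ is an ideal we have $bc\in I$, so $(a,bc)=0$ by the assumption $a\in I^\perp$, a contradiction. Hence $ab=0$ for every $b\in I$, i.e.\ $a\in\Ann I$.

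Combining the two inclusions yields $\Ann I=I^\perp$, and then $\dim I+\dim\Ann I=\dim I+\dim I^\perp=\dim A$. I do not anticipate any real obstacle; the only subtlety worth flagging is that both steps genuinely use the invariance identity $(ab,c)=(a,bc)$ in combination with nondegeneracy, so the statement depends essentially on the Frobenius structure and is not just a statement about arbitrary bilinear forms on commutative algebras.
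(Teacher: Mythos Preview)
Your argument is correct. The paper does not actually supply a proof of this lemma --- it is quoted from \cite{MTV6} and marked with an immediate \qed\ --- so there is no in-paper argument to compare against; your direct verification via invariance and nondegeneracy is the standard one and is exactly what is needed. One cosmetic point: in the first inclusion the chain $(a,b)=(a\cdot 1,b)=(1,ab)$ is more cleanly written as $(a,b)=(1\cdot a,b)=(1,ab)$, since the invariance identity $(xy,c)=(x,yc)$ applied to $x=a$, $y=1$ only returns $(a,b)$ again; of course commutativity of $A$ makes this harmless.
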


For any ideal $I\subset A$, the regular action of $A$ on itself induces
an action of $A/I$ on $\Ann I$.

\begin{lem}[\cite{MTV6}]
\label{coreg}
The $A/I$-module $\Ann I$ is isomorphic to the coregular representation of
$A/I$. In particular, the image of $A/I$ in $\End(\Ann I)$ is a maximal
commutative subalgebra.
\qed
\end{lem}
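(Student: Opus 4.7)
The plan is to combine Lemma~\ref{Ann=perp} with the invariance of the Frobenius form to transport the coregular structure from $(A/I)^*$ onto $\Ann I$.

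First I would observe that by Lemma~\ref{Ann=perp} we may identify $\Ann I$ with $I^\perp\subset A$. The invariant nondegenerate form $(\,,\,)$ on $A$ restricts to a bilinear pairing $A\times I^\perp\to\C$, and because $(I,I^\perp)=0$ it descends to a well-defined pairing
\[
\langle\,{,}\,\rangle:(A/I)\otimes I^\perp\to\C,\qquad \langle a+I,\,x\rangle\,=\,(a,x).
\]
Next I would verify that this pairing is nondegenerate. If $\langle a+I,x\rangle=0$ for every $x\in I^\perp$, then $a\in (I^\perp)^\perp$; but since $(\,,\,)$ is nondegenerate on $A$ we have $(I^\perp)^\perp=I$, so $a+I=0$. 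If $\langle a+I,x\rangle=0$ for every $a\in A$, then $x\in A^\perp=0$. Thus $x\mapsto\psi(x)$, where $\psi(x)(a+I):=(a,x)$, is a vector space isomorphism $\psi:\Ann I\to (A/I)^*$.

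Then I would check that $\psi$ intertwines the actions of $A/I$. The action on $\Ann I$ is inherited from the regular action of $A$ (well-defined on $A/I$ since $I\cdot\Ann I=0$ by commutativity, a fact which is immediate from the definition of $\Ann I$). The coregular action on $(A/I)^*$ is $(c\cdot\phi)(a+I)=\phi(c(a+I))$. Using invariance of $(\,,\,)$, for $c+I\in A/I$ and $x\in\Ann I$ we get
\[
\psi((c+I)\cdot x)(a+I)\,=\,(a,cx)\,=\,(ca,x)\,=\,\psi(x)(ca+I)\,=\,\bigl((c+I)\cdot\psi(x)\bigr)(a+I),
\]
so $\psi$ is $A/I$-equivariant, which proves the first assertion.

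Finally, the ``In particular'' clause is a direct application of the general fact, recalled in Section~\ref{comalg}, that for any finite-dimensional commutative algebra $B$ the image of $B$ in $\End(B^*)$ under the coregular action is a maximal commutative subalgebra; applying this to $B=A/I$ and transporting along $\psi$ yields the claim. The only nontrivial point in the whole argument is the nondegeneracy of the induced pairing on $(A/I)\times I^\perp$, which reduces cleanly to the identity $(I^\perp)^\perp=I$; everything else is bookkeeping with the invariance of the Frobenius form.
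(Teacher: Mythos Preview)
Your proof is correct. The paper does not supply its own argument here: the lemma is quoted from \cite{MTV6} and marked \qed immediately after the statement. What you wrote is the standard proof --- use Lemma~\ref{Ann=perp} to identify $\Ann I$ with $I^\perp$, observe that the Frobenius form descends to a perfect pairing between $A/I$ and $I^\perp$, and then check that invariance of the form makes the resulting isomorphism $\Ann I\to(A/I)^*$ an $A/I$-module map --- and is almost certainly the argument in the cited reference.
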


Let $P_1,\dots,P_m$ be polynomials in variables $x_1,\dots,x_m$.
Denote by $I$ the ideal in 
\linebreak
$\C[x_1,\dots,x_m]\kern-1em$ \kern1em
generated by $P_1,\dots,P_m$.

\begin{lem}[\cite{MTV6}]
\label{resform}
If the algebra $\C[x_1,\dots,x_m]/I$ is nonzero and finite-dimensional,
then it is a Frobenius algebra.
\qed
\end{lem}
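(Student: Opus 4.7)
The plan is to construct an explicit symmetric nondegenerate invariant bilinear form on $A=\C[x_1,\dots,x_m]/I$ via the Grothendieck residue, after a reduction to the local case.

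First, since $A$ is a nonzero finite-dimensional commutative $\C$-algebra, it is Artinian, so the affine scheme $V(I)\subset\C^m$ consists of finitely many closed points $p_1,\dots,p_r$. By the Chinese Remainder Theorem (applied to Artinian rings), the algebra $A$ decomposes as a finite direct sum $A\,=\,\bigoplus_{i=1}^r A_{p_i}$ of local Artinian $\C$-algebras, one for each point of $V(I)$. By Lemma~\ref{direct}, it then suffices to prove that each local summand $A_{p_i}$ is a Frobenius algebra.

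For a fixed local summand, translate coordinates so that $p_i$ is the origin. Working in the local ring $\O_0$ of $\C^m$ at $0$, define the bilinear form
$$
(a,b)_{p_i}\,=\,\Res_0\,\frac{\tilde a\,\tilde b\; dx_1\wedge\dots\wedge dx_m}{P_1\,P_2\,\cdots\,P_m}\,,
$$
where $\tilde a,\tilde b$ are any lifts of $a,b$ to $\O_0$. Standard properties of the local residue show that the value is independent of the choice of lifts, so the form descends to $A_{p_i}$; symmetry is automatic from commutativity, and invariance $(ac,b)_{p_i}=(a,cb)_{p_i}$ is immediate from associativity inside the residue. Finally, nondegeneracy is the classical local duality statement for zero-dimensional complete intersections (Scheja--Storch, or Grothendieck local duality): the socle of $A_{p_i}$ is one-dimensional and generated by the image of the Jacobian $J=\det(\der P_k/\der x_l)$, and the residue pairs $1$ with $J$ to a nonzero constant, so the matrix of $(\,{,}\,)_{p_i}$ in any monomial basis is nonsingular.

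The main obstacle is precisely this nondegeneracy statement. It cannot be proved by inspection of the formula; one invokes the Scheja--Storch theorem, or, equivalently, the fact that any zero-dimensional local complete intersection over $\C$ is Gorenstein and that the Grothendieck residue realizes the duality pairing. Everything else in the argument is a formal bookkeeping of the decomposition into local pieces and of the symmetry and invariance of the residue pairing.
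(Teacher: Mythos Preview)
Your argument is correct and is the standard proof of this fact via the Grothendieck (local) residue pairing. The paper itself offers no proof: the lemma is simply quoted from \cite{MTV6}, so there is nothing to compare approaches against. Your reduction to local summands via Lemma~\ref{direct}, followed by the residue pairing and the Scheja--Storch (local duality) argument for nondegeneracy, is exactly the classical route; the essential hypothesis you are using --- and which the paper sets up in the sentence preceding the lemma --- is that $I$ is generated by $m$ polynomials in $m$ variables, so a finite-dimensional quotient is automatically a zero-dimensional complete intersection, hence Gorenstein.
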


The last lemma has the following generalization. Let $\C_T(x_1,\dots,x_m)$
be the algebra of rational functions in $x_1,\dots,x_m$, regular at points of a
subset $T\subset\C^m$. Denote by $I_T$ the ideal in $\C_T(x_1,\dots,x_m)$
generated by $P_1,\dots,P_m$.

\begin{lem}
\label{resformloc}
Assume that the solution set to the system of equations
$$
P_1(x_1,\dots,x_m) = \dots = P_m(x_1,\dots,x_m) = 0 
$$
is finite and lies in $T$. Then the algebra
$\C_T(x_1,\dots,x_m)/I_T$ is a Frobenius algebra.
\qed
\end{lem}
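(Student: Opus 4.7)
The plan is to identify $\C_T(x_1,\dots,x_m)/I_T$ with the polynomial quotient $A = \C[x_1,\dots,x_m]/J$, where $J$ is the ideal generated by $P_1,\dots,P_m$ in $\C[x_1,\dots,x_m]$, and then to invoke Lemma~\ref{resform} applied to $A$.

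First I would check that $A$ is finite-dimensional, so that Lemma~\ref{resform} applies. By hypothesis $V(J)$ is finite, so $A$ is a Noetherian ring of Krull dimension zero and is therefore Artinian. By the Nullstellensatz every maximal ideal of $A$ corresponds to a point of $V(J)$ and has residue field $\C$, so $A$ decomposes as a finite direct product of local Artinian $\C$-algebras with residue field $\C$, each of finite length and hence finite-dimensional over $\C$. In the (only nontrivial) case $V(J)\neq\emptyset$, this makes $A$ a nonzero finite-dimensional $\C$-algebra, and Lemma~\ref{resform} shows it is Frobenius.

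Next, write $S \subset \C[x_1,\dots,x_m]$ for the multiplicative set of polynomials nonvanishing at every point of $T$, so that $\C_T(x_1,\dots,x_m) = S^{-1}\C[x_1,\dots,x_m]$. Since localization is exact and $I_T$ is the extension of $J$,
\[
\C_T(x_1,\dots,x_m)/I_T \,\simeq\, S^{-1}A.
\]
The crucial point is that $S^{-1}A = A$: by $V(J) \subset T$, each $g \in S$ avoids every maximal ideal of $A$, and in any commutative Noetherian ring an element outside all maximal ideals is a unit. Thus every element of $S$ is already invertible in $A$, so the localization map $A \to S^{-1}A$ is an isomorphism. Combining this with the previous display yields $\C_T(x_1,\dots,x_m)/I_T \simeq A$, which is Frobenius.

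The main obstacle is the first step: translating the geometric finiteness of the solution set into the algebraic statement that $A$ is a finite-dimensional $\C$-algebra, which requires combining the Noetherian--Artinian structure theory with the Nullstellensatz to control the residue fields. Once finite-dimensionality is secured, identifying $S^{-1}A$ with $A$ is a one-line check, and the Frobenius conclusion is then immediate from Lemma~\ref{resform}.
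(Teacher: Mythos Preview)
Your argument is correct. The paper does not actually prove this lemma: it is stated with a \qed\ symbol and no proof, presumably because the authors regard it as a straightforward extension of Lemma~\ref{resform}. Your reduction to that lemma via the isomorphism $S^{-1}A\cong A$ is exactly the natural way to justify the statement, and the details you give (Krull dimension zero from finiteness of $V(J)$, Artinian decomposition, finite $\C$-dimension of each local summand) are sound. One minor remark: the fact that an element outside every maximal ideal is a unit holds in any commutative ring with identity, so the Noetherian hypothesis is not needed at that step.
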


\subsection{Wronski map}
\label{wronski}

Let $X$ be a point of $\Om_{\bs\la}$. Define
\vvn.3>
\beq
\label{wronsk of X}
\Wr_X(u)\,=\,\Wr(f_1(u,X),\dots,f_N(u,X))\,,
\vv.3>
\eeq
where $f_1(u,X),\dots,f_N(u,X)$ are given by \Ref{Def of space}.
Define the {\it Wronski map\/} $\pi:\Om_{\bs\la}\to\C^n$ by
\,$X\mapsto\bs a=(a_1,\dots,a_n)$ \,if
\be
\Wr_X(u)\,=\,e^{\>\sum_{i=1}^N K_iu}\!\prod_{1\le i<j\le N}(K_j-K_i)
\ \Bigl(u^n+\sum_{s=1}^n (-1)^s\>a_s\,u^{n-s}\Bigr)\,.
\ee

\sskip
For $\bs a\in\C^n$, let ${I^\O_\lba}$ be the ideal
in $\O_{\bs\la}\>$ generated by the elements $\Sig_s-a_s$, $s=1,\dots, n$,
where $\Sig_1,\dots,\Sig_n$ are defined by \Ref{Wr coef}. The quotient algebra
\vvn.3>
\beq
\label{Olaa}
\O_\lba\>=\,\O_{\bs\la}/I^\O_\lba
\vv.3>
\eeq
is the scheme-theoretic fiber of the Wronski map. We call it
the {\it algebra of functions on the preimage\/} $\pi^{-1}(\bs a)$.

\goodbreak
\begin{lem}\label{local Wr}
\strut
\begin{enumerate}
\item [(i)] The algebra $\O_\lba$ is a finite-dimensional
commutative associative unital algebra and $\dim_\C\,\O_\lba$
does not depend on $\bs a$.
\item [(ii)]
The algebra $\O_\lba$ is a Frobenius algebra.
\end{enumerate}
\end{lem}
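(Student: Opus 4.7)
The plan is to obtain (i) from finiteness and flatness of the Wronski map $\pi : \Om_{\bs\la} \to \C^n$, $X \mapsto \bs a$ defined in Section~\ref{wronski}, and then to deduce (ii) as a direct application of Lemma~\ref{resform}.

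The key geometric input is that $\pi$ is a finite morphism. A point of the fiber $\pi^{-1}(\bs a)$ is an $N$-dimensional space of quasi-exponentials with prescribed exponents $K_1, \dots, K_N$ whose Wronskian is determined by $\bs a$ via~(\ref{Wr coef}); such a space is recovered from the differential operator $\D^\O_{\bs\la}$ of~(\ref{DOla}), whose singular structure consists of regular singularities at the zeros of $u^n + \sum_s (-1)^s a_s u^{n-s}$ and a single irregular singularity at infinity with exponents $K_1, \dots, K_N$ dictated by Lemma~\ref{chi}. A classical argument shows that such operators admit only finitely many $N$-dimensional solution spaces of the prescribed quasi-exponential form, so each fiber $\pi^{-1}(\bs a)$ is finite. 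Equivalently one may embed $\Om_{\bs\la}$ as an affine chart in a Grassmannian of quasi-exponentials and invoke the classical finiteness of the Wronski map on the compactification.

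Once $\pi$ is finite, equality of the dimensions of source and target forces $\Sig_1, \dots, \Sig_n$ to be algebraically independent over $\C$, so $\C[\Sig_1, \dots, \Sig_n]$ is a polynomial subring of $\O_{\bs\la}$ of Krull dimension $n$ over which $\O_{\bs\la}$ is module-finite. Both $\O_{\bs\la}$ and $\C[\Sig_1, \dots, \Sig_n]$ are regular of dimension $n$, so the miracle flatness theorem gives that $\O_{\bs\la}$ is free of some rank $r$ as a module over $\C[\Sig_1, \dots, \Sig_n]$. Specializing at $\bs a$ yields $\O_\lba = \O_{\bs\la} \otimes_{\C[\Sig_1, \dots, \Sig_n]} \C$, a free $\C$-module of rank $r$; hence $\dim_\C \O_\lba = r$ is finite and independent of $\bs a$, proving~(i). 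For~(ii), apply Lemma~\ref{resform} to the polynomial ring $\O_{\bs\la} = \C[f_{ij}]$ in its $n = |\bs\la|$ variables $f_{ij}$ and to the ideal $I^\O_\lba$, which is generated by exactly $n$ elements $\Sig_1 - a_1, \dots, \Sig_n - a_n$. Part~(i) supplies the hypotheses that the quotient is nonzero and finite-dimensional, so Lemma~\ref{resform} concludes that $\O_\lba$ is a Frobenius algebra.

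The main obstacle is the first step, establishing that the Wronski map $\pi$ is finite in this quasi-exponential setting. Once that geometric input is in hand, the rest of the proof is standard commutative algebra via miracle flatness and the Frobenius criterion provided by Lemma~\ref{resform}.
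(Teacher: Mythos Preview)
Your proposal is correct and follows essentially the same route as the paper: establish that the Wronski map is finite, deduce that the scheme-theoretic fibers all have the same dimension, and then invoke Lemma~\ref{resform} for the Frobenius property. The only real difference is that the paper does not argue finiteness of $\pi$ from scratch but simply cites \cite{MTV5} (Propositions~4.2 and~3.1 there) for the statement that the Wronski map is a polynomial map of finite degree; your miracle-flatness paragraph is a more explicit unpacking of why ``finite degree'' forces $\dim_\C\O_\lba$ to be constant in~$\bs a$.
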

\begin{proof}
The Wronski map is a polynomial map of finite degree, see
Propositions 4.2 and 3.1 in \cite{MTV5}.
This implies part (i) of the lemma and the fact that $\O_\lba$
is a direct sum of local algebras.
The dimension of $\O_\lba$ is the degree of the Wronski map and
the local summands correspond to the points of the set $\pi^{-1}(\bs a)$.
The algebra $\O_\lba$ is Frobenius by Lemma~\ref{resform}.
\end{proof}

\section{Intersection $\OmLb$ and algebra $\OLb$}
\label{schubert}
\subsection{Intersection $\OmLb$}
For $b\in \C$ and a partition $\bs\mu$ of $n$ with at most $N$ parts,
denote by $\Om_{\bs\mu}(b)$ the variety of all spaces of quasi-exponentials
$X\in\Om_{\bs\la}$ such that for every $i=1,\dots,N$ there exists a function
$g(u)\in X$ with zero of order $\mu_i+N-i$ at $b$.

Let $\bs\La=(\bs\la^{(1)},\dots,\bs\la^{(k)})$ be a sequence of partitions
with at most $N$ parts such that $\sum_{s=1}^k|\bs\la^{(s)}|=n$.
Denote $n_s=|\bs\la^{(s)}|$.
Let $\bs b=(b_1,\dots,b_k)$ be a sequence of distinct complex numbers.

Consider the intersection
\vvn-.8>
\beq
\label{Omega}
\OmLb\,=\,\bigcap_{s=1}^k\,\Om_{\bs\la^{(s)}}(b_s)\,.
\eeq

Given a space of quasi-exponentials $X\subset\Om_{\bs\la}$, denote by
$\D_X$ the monic scalar differential operator of order $N$ with kernel $X$.
The operator $\D_X$ equals the operator $\D^\O_{\bs\la}$, see~\Ref{DOla},
computed at $X$.

\begin{lem}
\label{lem on intersection}
A space of quasi-exponentials $X\subset\Oml$ is a point of\/ $\OmLb$ if and
only if the singular points of the operator\/ $\D_X$ are at $b_1,\dots,b_k$
and $\infty$ only, the singular points at $b_1,\dots,b_k$ are regular,
and the exponents at\/ $b_s$, $s=1,\dots,k$, are equal to
$\la_N^{(s)},\,\la_{N-1}^{(s)}+1,\,\dots\,,\alb\la_1^{(s)}+N-1$.
\qed
\end{lem}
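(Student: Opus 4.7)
The plan is to read off the singular structure of $\D_X$ directly from its definition~\Ref{DOla} and match it with the conditions defining $\OmLb$. The coefficients of $\D_X=\Wr_X(u)^{-1}\>\rdet(\cdots)$ are rational functions of~$u$ (the exponential factor $e^{\sum_iK_iu}$ cancels between the row determinant and $\Wr_X$), so the finite singular points of $\D_X$ lie in the zero set of the polynomial factor $u^n+\sum_{s=1}^n(-1)^s\Sig_s u^{n-s}$ of $\Wr_X$ from~\Ref{Wr coef}, and this factor has total degree $n=\sum_sn_s$. The second ingredient I would record is the standard local Wronskian identity: for analytic functions $g_1,\dots,g_N$ with pairwise distinct orders of vanishing $d_1,\dots,d_N$ at a point~$b$, the Wronskian $\Wr(g_1,\dots,g_N)$ vanishes at~$b$ to order exactly $\sum_i d_i-\binom{N}{2}$, the leading coefficient being, up to nonzero scalars, the Vandermonde determinant in the~$d_i$.

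For the forward direction, assume $X\in\OmLb$. For each~$s$, the defining condition of $\Om_{\bs\la^{(s)}}(b_s)$ produces $N$ functions in~$X$ vanishing at $b_s$ to the pairwise distinct orders $d_i=\la_i^{(s)}+N-i$ (strictly decreasing in~$i$ because $\bs\la^{(s)}$ is a partition); these are automatically linearly independent and hence form a basis of~$X$. The local Wronskian identity then gives that $\Wr_X$ vanishes at $b_s$ to order $\sum_i d_i-\binom{N}{2}=n_s$. Summing over~$s$ exhausts the total degree~$n$ of the polynomial factor of $\Wr_X$, so all of its zeros are concentrated at $\{b_1,\dots,b_k\}$ and $\D_X$ has no other finite singular points. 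At each $b_s$, the existence of $N$ linearly independent analytic solutions with distinct integer orders of vanishing forces $b_s$ to be a regular singular point of $\D_X$, and the $d_i$'s are the indicial roots; listed in increasing order they become the claimed exponents $\la_N^{(s)},\la_{N-1}^{(s)}+1,\dots,\la_1^{(s)}+N-1$.

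Conversely, suppose $\D_X$ has regular singular points at the $b_s$ with the indicated exponents. Since $X=\ker\D_X$ consists of entire quasi-exponentials, no logarithmic terms can occur at any $b_s$ (there are already $N$ linearly independent analytic local solutions exhausting the kernel), so the Frobenius theorem supplies, at each~$b_s$, a basis of~$X$ whose elements vanish precisely to the orders $\la_i^{(s)}+N-i$; this certifies $X\in\Om_{\bs\la^{(s)}}(b_s)$ for every~$s$, and hence $X\in\OmLb$. The main technical point is the local Wronskian degree computation; once that and the total-degree accounting are in place, the remaining work is essentially bookkeeping between the orders of vanishing and the exponents.
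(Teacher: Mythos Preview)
Your argument is correct. The paper treats this lemma as standard and supplies no proof (the statement is immediately followed by a \qed); your verification via the local Wronskian order computation $\sum_i(\la_i^{(s)}+N-i)-\binom N2=n_s$ together with the total-degree accounting $\sum_s n_s=n$ for the polynomial factor of $\Wr_X$, and the Frobenius argument for the converse, is precisely the intended routine check.
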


\begin{lem}
\label{lem inclus}
Let $\bs b=(b_1,\dots,b_k)$, and\/ $\bs n=(n_1,\dots,n_k)$ be as at the beginning of this
section. Let the numbers 
$\bs a=(a_1,\dots,a_n)$ be related to $\bs b$ and $\bs n$
as in \Ref{ab}.
Then\/ $\OmLb\subset\pi^{-1}(\bs a)$.
In particular, the set $\,\OmLb$ is finite.
\qed
\end{lem}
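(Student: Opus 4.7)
The plan is to read off the Wronskian of $X$ directly from the singular structure of the operator $\D_X$, which is pinned down by Lemma~\ref{lem on intersection}, and then match it to the expansion in \Ref{ab}.

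Let $X\in\OmLb$. By Lemma~\ref{lem on intersection}, the operator $\D_X$ has only regular singularities in the finite plane, located at $b_1,\dots,b_k$, with exponents at $b_s$ equal to $\la_N^{(s)},\>\la_{N-1}^{(s)}+1,\>\dots,\>\la_1^{(s)}+N-1$. The standard computation (taking a local fundamental system of solutions $u^{\rho}(1+O(u-b_s))$ at a regular singular point and expanding the Wronskian determinant) shows that $\Wr_X(u)$ vanishes at $b_s$ to order $\sum_{i=1}^N\bigl(\la_{N+1-i}^{(s)}+i-1\bigr)-\binom{N}{2}$. Since $\sum_{i=1}^N(i-1)=\binom{N}{2}$, this order equals $n_s$. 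Away from $b_1,\dots,b_k$, the operator is nonsingular and the Wronskian does not vanish.

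Now look at the polynomial part of the Wronskian via formula~\Ref{Wr coef}:
\be
\Wr_X(u)\,=\,e^{\sum_{i=1}^N K_iu}\!\prod_{1\le i<j\le N}(K_j-K_i)
\ \Bigl(u^n+\sum_{s=1}^n (-1)^s\>\Sig_s(X)\,u^{n-s}\Bigr).
\ee
The monic polynomial factor on the right has degree $n=\sum_s n_s$, and by the previous paragraph its vanishing orders at $b_1,\dots,b_k$ add up to $\sum_s n_s=n$. Hence this polynomial is forced to equal $\prod_{s=1}^k(u-b_s)^{n_s}$. Comparing with~\Ref{ab}, we get $\Sig_s(X)=a_s$ for every $s=1,\dots,n$, which says exactly $\pi(X)=\bs a$. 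This proves $\OmLb\subset\pi^{-1}(\bs a)$.

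For finiteness, Lemma~\ref{local Wr}(i) asserts that $\O_\lba$ is finite-dimensional, equivalently that the scheme-theoretic fiber $\pi^{-1}(\bs a)$ is a finite subset of $\Om_{\bs\la}$; since $\OmLb$ is contained in this fiber, it is finite as well.

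The only non-routine point is the computation in the second paragraph: identifying the exponent of vanishing of $\Wr_X$ at a regular singular point with the sum of the local exponents minus $\binom{N}{2}$. This is a standard fact about scalar ODEs with regular singularities, but it is the key input that links the Schubert-type condition defining $\Om_{\bs\la^{(s)}}(b_s)$ to the Wronski map.
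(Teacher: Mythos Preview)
Your proof is correct. The paper itself supplies no argument (the lemma carries an immediate $\qed$), treating the claim as evident. Your route works but is slightly more circuitous than necessary: you pass from the Schubert condition to the exponents of $\D_X$ via Lemma~\ref{lem on intersection}, and then invoke a ``standard fact'' about regular singular points to recover the vanishing order of the Wronskian. One can shortcut this loop: directly from the definition of $\Om_{\bs\la^{(s)}}(b_s)$, the space $X$ admits a basis $g_1,\dots,g_N$ with $g_i$ vanishing to order $\la_i^{(s)}+N-i$ at $b_s$; since these orders are distinct integers, a straightforward determinant expansion gives that $\Wr(g_1,\dots,g_N)$ vanishes at $b_s$ to order exactly $\sum_i(\la_i^{(s)}+N-i)-\binom{N}{2}=n_s$, and $\Wr_X$ differs from this only by a nonzero scalar. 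Your detour through $\D_X$ and its exponents recovers precisely this computation, but the direct version makes it clear why the authors felt no proof was needed. (Minor typo: your local fundamental solutions should read $(u-b_s)^\rho(1+O(u-b_s))$, not $u^\rho$.)
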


Let $\Q_{\bs\la}$ be the field of fractions of $\O_{\bs\la}\>$,
and $\>\Q_\Llb\subset\Q_{\bs\la}\,$ the subring of elements
regular at all points of $\OmLb$.

Consider the $N\,{\times}\,N$ matrices $M_1,\dots,M_k$ with entries in
$\O_{\bs\la}$\>,
\be
(M_s)_{ij}\>=\,\frac1{(\la^{(s)}_j+N-j)!}\,\biggl(\<\Bigl(
\frac d{du}\Bigr)^{\la^{(s)}_j+N-j}f_i(u)\biggr)\bigg|_{u=b_s}\,.
\ee
The values of $M_1,\dots,M_k$ at any point of $\Om_\Lbl$
are matrices invertible over $\C$. Therefore, the inverse matrices
$M_1^{-1},\dots,M_k^{-1}$ exist as matrices with entries in
$\Q_\Llb$.

\sskip
Introduce the elements $g_{ijs}\in\Q_\Llb$\>, \,$i=1,\dots,N$,
$j=0,\dots,d_1$, $s=1,\dots,k$, by the rule
\beq
\label{gijs}
\sum_{j=0}^{d_1}\,g_{ijs}\,(u-b_s)^j\,=\,
\sum_{m=1}^N\,(M_s^{-1})_{im}\,f_m(u)\,.
\eeq
Clearly, $g_{i,\la^{(s)}_j+N-j,s}=\dl_{ij}$ for all $i,j=1,\dots,N$, and
$s=1,\dots,k$.

For each $s=1,\dots,k$, let $J^{\Q,s}_\Llb$ be
the ideal in $\Q_\Llb\>$ generated by the elements $g_{ijs}$,
$\,i=1,\dots,N$, $\,j=0,\dots,\la^{(s)}_i+N-i-1$, and
$J^\Q_\Llb=\sum_{s=1}^k J^{\Q,s}_\Llb$.
Note that the number of generators of the ideal
$J^\Q_\Llb$ equals $n$.

\sskip
The quotient algebra
\vvn-.4>
\beq
\label{OX}
\O_\Llb\,=\,\Q_\Llb/J^\Q_\Llb
\eeq
is the scheme-theoretic intersection of varieties $\Om_{\bs\la^{(s)}},\,
s=1,\dots,k$. We call it the
{\it algebra of functions on\/} $\OmLb\,$.

\begin{lem}
\label{frobenius}
The algebra $\OmLb$ is a Frobenius algebra.
\end{lem}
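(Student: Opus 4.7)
The plan is to decompose $\O_\Llb$ as a direct sum of local Artinian complete-intersection algebras at the points of $\OmLb$, observe that a $0$-dimensional complete intersection local ring is Gorenstein and hence a Frobenius algebra, and then invoke Lemma~\ref{direct}.

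First, Lemma~\ref{lem inclus} gives $\OmLb\subset\pi^{-1}(\bs a)$, so $\OmLb$ is a finite set. Consequently $\Q_\Llb$ is a semi-local Noetherian ring whose maximal ideals $\m_X$ are in bijection with $X\in\OmLb$, and each localization $(\Q_\Llb)_{\m_X}$ is a regular local ring of Krull dimension $n$, being the localization of the polynomial ring $\O_{\bs\la}=\C[f_{ij}]$ at a smooth $\C$-point of $\Om_{\bs\la}$.

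Second, I would identify an effective set of $n$ generators of $J^\Q_\Llb$. Formally the ideal is given by $\sum_{s,i}(\la^{(s)}_i+N-i)$ elements $g_{ijs}$, but the identity $g_{i,\la^{(s)}_{i'}+N-i',s}=\dl_{ii'}$ forces $g_{ijs}$ to vanish identically in $\Q_\Llb$ whenever $j=\la^{(s)}_{i'}+N-i'$ for some $i'>i$ (such $j$ lies in the defining range $0\le j<\la^{(s)}_i+N-i$ because $\bs\la^{(s)}$ is a partition). Discarding these $kN(N-1)/2$ trivial generators leaves $\sum_{s,i}\la^{(s)}_i=n$ nontrivial ones; moreover, their joint vanishing encodes precisely the Schubert conditions defining $\Om_{\bs\la^{(s)}}(b_s)$ at each $b_s$, so the common zero locus of the effective generators inside $\on{Spec}(\Q_\Llb)$ is the finite set $\OmLb$.

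Third, I would localize at each $X\in\OmLb$. The ideal $J^\Q_\Llb(\Q_\Llb)_{\m_X}$ is $\m_X$-primary (its support is $\{X\}$) and is generated by $n$ elements in the $n$-dimensional regular local ring $(\Q_\Llb)_{\m_X}$. Krull's height theorem then forces these generators to form a regular sequence, so the local quotient is a $0$-dimensional complete intersection and is Gorenstein, hence a local Frobenius algebra. Consequently
\[
\O_\Llb\,=\,\bigoplus_{X\in\OmLb}(\Q_\Llb)_{\m_X}\big/J^\Q_\Llb(\Q_\Llb)_{\m_X}
\]
is a finite direct sum of Frobenius algebras, and Lemma~\ref{direct} finishes the proof.

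The main obstacle is the careful bookkeeping in the second step: one must confirm both that the declared trivial generators actually vanish in $\Q_\Llb$ and that the remaining $n$ nontrivial generators cut out $\OmLb$ set-theoretically inside $\on{Spec}(\Q_\Llb)$, so that Krull's theorem applies in the right dimension. As an alternative, one could clear denominators by multiplying each effective $g_{ijs}$ by the unit $\det M_s\in\Q_\Llb$ and apply Lemma~\ref{resformloc} with $T$ a suitable neighborhood of $\OmLb$, but verifying that the resulting polynomial system has no spurious zeros outside $\OmLb$ ultimately reduces to the same local complete-intersection analysis.
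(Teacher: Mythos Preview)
Your argument is correct and is essentially the same as the paper's, which dispatches the lemma in one line by invoking Lemma~\ref{resformloc}. Both approaches rest on the same observation, made explicit just before~\Ref{OX}: after discarding the identically zero $g_{ijs}$, the ideal $J^\Q_\Llb$ is generated by exactly $n$ elements in the $n$-variable setting, and its zero locus is the finite set $\OmLb$, so $\O_\Llb$ is a zero-dimensional complete intersection and hence Gorenstein/Frobenius. The paper's Lemma~\ref{resformloc} packages precisely this residue-form argument; you have unpacked it via localization at each $X\in\OmLb$ and Krull's height theorem, which is arguably cleaner here since the generators $g_{ijs}$ are genuinely rational (they involve $M_s^{-1}$), whereas Lemma~\ref{resformloc} is stated for polynomial $P_i$ and so, strictly speaking, requires the denominator-clearing step you mention at the end. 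Either way the content is identical, and you correctly identify Lemma~\ref{resformloc} as the alternative route.
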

\begin{proof}
The claim follows from Lemma~\ref{resformloc}.
\end{proof}

It is known from Schubert calculus that
\beq
\label{dimO}
\dim\O_\Llb\,=\,\dim\>
(\otimes_{s=1}^kL_{\bs\la^{(s)}})_{\bs\la}\,,
\eeq
see \cite[Lemma 3.6 and Proposition 3.7]{MTV8}.

\subsection{Algebra $\OLb$ as a quotient of $\O_{\bs\la}$}
Consider the differential operator
\beq
\label{DOlat}
\Dt^\O_{\bs\la}=\,\rdet
\left(\begin{matrix} f_1(u) & f_1'(u) &\dots & f_1^{(N)}(u) \\
f_2(u) & f_2'(u) &\dots & f_2^{(N)}(u) \\ \dots & \dots &\dots & \dots \\
1 & \der &\dots & \der^N
\end{matrix}\right).
\eeq
It is a differential operator in the variable $u$ whose coefficients are
polynomials in $u$ with coefficients in $\O_{\bs\la}$,
\vvn-.6>
\beq
\label{DOt}
\Dt^\O_{\bs\la}=\,\sum_{i=0}^N\,G_i(u)\,\der^{N-i}\>.
\eeq
Clearly, $G_i(u)=0$ \>for \>$i>n$, and \,$\deg G_i \leq n$, otherwise.
We also have
\vvn.3>
\begin{align*}
G_0(u)\,&{}=\,\Wr(f_1(u),\dots,f_N(u))\,,
\\[4pt]
G_i(u)\,&{}=\,\Wr(f_1(u),\dots,f_N(u))\>F_i(u)\,,\qquad i=1,\dots,N\,,\kern-3em
\\[-12pt]
\end{align*}

Introduce the elements $G_{ijs}\in\O_{\bs\la}$\>, \,$i=0,\dots,N$,
$j=0,\dots,n-i$, $s=1,\dots,k$, by the rule
\beq
\label{Gijs}
G_i(u)\,=\,\sum_{j=0}^{n}\,G_{ijs}\,(u-b_s)^j\,.
\eeq
Define the {\it indicial polynomial\/ $\chi_s^\O(\al)$ at\/ $b_s$}
by the formula
\be
\chi_s^\O(\al)\,=\,
\sum_{i=0}^{N}\,G_{i,n_s-i,s}\prod_{j=0}^{N-i-1} (\al-j)\,.
\ee
It is a polynomial of degree $N$ in the variable $\al$ with coefficients in
$\O_{\bs\la}$.

\begin{lem}
\label{chis}
For a complex number $r$, the element\/ $\chi_s^\O(r)$ is invertible
in\/ $\Q_\Llb$ provided\/ $r\ne\la^{(s)}_j+N-j$ \,for all\/ $j=1,\dots,N$.
\end{lem}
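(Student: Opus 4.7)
The plan is to identify $\chi_s^\O(\al)$, evaluated at a point $X\in\OmLb$, with (a nonzero constant multiple of) the classical indicial polynomial of the scalar operator $\D_X$ at the regular singular point $b_s$. Once this identification is made, Lemma~\ref{lem on intersection} tells us that the roots of that indicial polynomial are exactly $\la^{(s)}_j+N-j$, $j=1,\dots,N$, and the hypothesis on $r$ forces $\chi_s^\O(r)(X)\neq 0$ at every point of the finite set $\OmLb$; since $\Q_\Llb$ consists of rational functions on $\Om_{\bs\la}$ regular at each point of $\OmLb$, an element of $\Q_\Llb$ that vanishes nowhere on $\OmLb$ is invertible there.

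Concretely, first I would evaluate the polynomial differential operator $\Dt^\O_{\bs\la}$ at $X$. Using the relation $G_0(u)=\Wr(f_1,\dots,f_N)$ and $G_i(u)=\Wr\cdot F_i(u)$, together with $\Wr_X(u)=c_X\prod_{t=1}^k(u-b_t)^{n_t}$ where $c_X=e^{\sum K_iu}\prod_{i<j}(K_j-K_i)$ evaluated away from zero (in fact independent of $X$), one sees that each $G_i(u)(X)$ has a zero of order at least $n_s-i$ at $u=b_s$. Hence $G_{i,j,s}(X)=0$ for $j<n_s-i$, and the coefficient $G_{i,n_s-i,s}(X)$ is the leading Taylor term. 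Dividing by $\Wr_X(u)$ recovers the coefficients $F_i(u)(X)$ of $\D_X$; writing $(u-b_s)^iF_i(u)(X)=\tilde a_i(u)$, one checks
\[
\tilde a_i(b_s)\,=\,\frac{G_{i,n_s-i,s}(X)}{c_X\prod_{t\neq s}(b_s-b_t)^{n_t}}\,,
\]
with $\tilde a_0(b_s)=1$ (the $i=0$ identity giving the correct normalization constant).

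From this, the standard indicial polynomial of $\D_X$ at $b_s$,
\[
P_s(\al)\,=\,\sum_{i=0}^N\tilde a_i(b_s)\prod_{j=0}^{N-i-1}(\al-j)\,,
\]
satisfies $\chi_s^\O(\al)(X)=c_X\prod_{t\neq s}(b_s-b_t)^{n_t}\cdot P_s(\al)$, and the prefactor is a nonzero constant (independent of $X$). By Lemma~\ref{lem on intersection}, at any $X\in\OmLb$ the operator $\D_X$ has a regular singularity at $b_s$ with exponents $\la^{(s)}_N,\la^{(s)}_{N-1}+1,\dots,\la^{(s)}_1+N-1$, so these are precisely the $N$ roots of $P_s(\al)$. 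Hence for $r$ avoiding all $\la^{(s)}_j+N-j$, we get $P_s(r)\neq0$ and therefore $\chi_s^\O(r)(X)\neq0$ for every $X\in\OmLb$.

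Finally, since $\OmLb$ is finite by Lemma~\ref{lem inclus}, the function $\chi_s^\O(r)\in\O_{\bs\la}\subset\Q_\Llb$ is nonvanishing on $\OmLb$, and its reciprocal is a rational function regular at every point of $\OmLb$, hence lies in $\Q_\Llb$. The mildly delicate step is the bookkeeping that identifies $\chi_s^\O(\al)(X)$ with the classical indicial polynomial up to an explicit nonzero scalar; everything else is routine once this identification is in place.
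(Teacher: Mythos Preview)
Your proposal is correct and follows essentially the same approach as the paper: the paper's proof simply observes that an element of $\Q_\Llb$ is invertible if and only if its value at every point of $\Om_\Lbl$ is nonzero, and then invokes Lemmas~\ref{lem on intersection} and~\ref{lem inclus}, which is exactly your argument with the identification of $\chi_s^\O(\al)(X)$ as (a nonzero multiple of) the indicial polynomial of $\D_X$ at $b_s$ made explicit. One small wording issue: your $c_X$ is not literally a constant, since $e^{\sum K_iu}$ depends on $u$; the point is that this factor is nowhere-vanishing and independent of $X$, so upon evaluating at $u=b_s$ it contributes only a nonzero scalar to the comparison between $\chi_s^\O(\al)(X)$ and the classical indicial polynomial.
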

\begin{proof}
An element of $\Q_\Llb$ is invertible if and only if its value
at any point of $\Om_\Lbl$ is nonzero. Now the claim follows from
Lemmas~\ref{lem on intersection} and~\ref{lem inclus}.
\end{proof}

For each $s=1,\dots,k$, let $I^{\Q,s}_\Llb$ be
the ideal in $\Q_\Llb\>$ generated by the elements $G_{ijs}$,
$\,i=0,\dots,N$, $\,0\le j<n_s-i$, and the coefficients of the polynomials
\beq
\label{chisO}
\chi_s^\O(\al)\,-\,\prod_{\fratop{r=1}{r\ne s}}^k\,(b_s-b_r)^{n_r}\,
\prod_{l=1}^N\,(\al-\la_l^{(s)}-N+l)\,,\qquad s=1,\dots,k\,.\kern-3em
\eeq
Denote \,$I^\Q_\Llb=\sum_{s=1}^k I^{\Q,s}_\Llb\,$.

\begin{lem}[Lemma 5.5 in \cite{MTV6}]
\label{ideals}
For any $s=1,\dots,k$, the ideals\/ $I^{\Q,s}_\Llb$ and\/ $J^{\Q,s}_\Llb$
coincide.
\qed
\end{lem}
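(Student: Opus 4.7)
The plan is to prove both inclusions $I^{\Q,s}_\Llb \subseteq J^{\Q,s}_\Llb$ and $J^{\Q,s}_\Llb \subseteq I^{\Q,s}_\Llb$ by exploiting the single identity $\Dt^\O_{\bs\la}\tilde f_i(u) = 0$ in $\Q_\Llb$, which holds for every $i=1,\dots,N$ because $\tilde f_i(u) := \sum_m (M_s^{-1})_{im}f_m(u)$ (the renormalized basis of \Ref{gijs}) is a $\Q_\Llb$-linear combination of functions annihilated by $\Dt^\O_{\bs\la}$. Expanding $\tilde f_i(u) = \sum_j g_{ijs}(u-b_s)^j$ and $G_i(u) = \sum_j G_{ijs}(u-b_s)^j$ at $u = b_s$ and collecting coefficients of each $(u-b_s)^m$ gives, for every pair $(i,m)$, a polynomial relation between the $g_{ijs}$ and the $G_{i'js}$; these relations are the lever we use in opposite directions to obtain the two inclusions.

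For $I^{\Q,s}_\Llb \subseteq J^{\Q,s}_\Llb$, set $\mu_l = \la^{(s)}_l + N - l$ and work modulo $J^{\Q,s}_\Llb$. The congruence $g_{ijs} \equiv 0$ for $j < \mu_i$, combined with the built-in normalization $g_{i,\mu_l,s} = \dl_{il}$ coming from the definition of $M_s$, yields $\tilde f_i \equiv (u-b_s)^{\mu_i} + O((u-b_s)^{\mu_i+1})$. Substituting into $\Dt^\O_{\bs\la}\tilde f_i \equiv 0$ and inducting on the order $m$, one shows that at every order below $\mu_i + n_s - N$ the identity forces $G_{i'js} \in J^{\Q,s}_\Llb$ for all $j < n_s - i'$. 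Once these vanishings are in place, at the balanced order $m = \mu_i + n_s - N$ all $i'$-terms contribute simultaneously and the identity reduces to $\chi_s^\O(\mu_i) \equiv 0$. Since the $N$ distinct numbers $\mu_1 > \dots > \mu_N$ are then roots of the degree-$N$ polynomial $\chi_s^\O$, we conclude $\chi_s^\O(\al) \equiv G_{0,n_s,s}\prod_l(\al - \mu_l)$. The identification $G_{0,n_s,s} \equiv \prod_{r \ne s}(b_s - b_r)^{n_r}$ follows from $G_0 = \Wr(f_1,\dots,f_N)$ together with formula \Ref{Wr coef} and the inclusion $\OmLb \subset \pi^{-1}(\bs a)$ of Lemma~\ref{lem inclus}, which factors the polynomial part of the Wronskian as $\prod_r(u-b_r)^{n_r}$ after the exponential and Vandermonde constants are extracted.

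For the reverse inclusion $J^{\Q,s}_\Llb \subseteq I^{\Q,s}_\Llb$, work modulo $I^{\Q,s}_\Llb$. The vanishings $G_{i'js} \equiv 0$ for $j < n_s - i'$ together with the prescribed indicial polynomial express that $\Dt^\O_{\bs\la}/G_0$ is a monic scalar operator with regular singularity at $b_s$ whose indicial polynomial has $N$ distinct integer roots $\mu_l$. Because the kernel is already spanned by the power series $\tilde f_1, \dots, \tilde f_N$ (no logarithmic solutions, since $\tilde f_i$ is a $\Q_\Llb$-combination of the quasi-exponentials $f_m$), the regular singularity is semisimple, and for each $l$ there is a unique power series solution of the form $(u-b_s)^{\mu_l}(1 + O(u-b_s))$ with vanishing coefficients at the exponents $\mu_{l'}$ for $l' \ne l$. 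The normalization $g_{i,\mu_l,s} = \dl_{il}$ then singles out $\tilde f_i$ as this unique Frobenius solution, forcing $g_{ijs} \equiv 0 \pmod{I^{\Q,s}_\Llb}$ for all $j < \mu_i$.

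The main obstacle is the order-by-order inductive bookkeeping in the first inclusion, and within it the correct identification of the leading Taylor coefficient $G_{0,n_s,s}$ modulo $J^{\Q,s}_\Llb$, which crucially uses both \Ref{Wr coef} and Lemma~\ref{lem inclus}. A secondary subtlety in the reverse direction is ruling out logarithmic Frobenius terms at integer-spaced exponents; this is handled by the a priori existence of the non-logarithmic basis $\tilde f_l$ arising from the quasi-exponential structure, ensuring that the usual resonance obstructions in Frobenius theory vanish identically.
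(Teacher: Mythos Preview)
The paper does not prove this lemma; it is imported from \cite{MTV6} with a \qed and no argument, so there is no proof here to compare against. Your overall strategy---expand $\Dt^\O_{\bs\la}\tilde f_i=0$ in powers of $(u-b_s)$ and read off relations between the $g_{ijs}$ and the $G_{i'js}$---is the natural one, and the Frobenius argument for $J^{\Q,s}_\Llb\subseteq I^{\Q,s}_\Llb$ is on the right track.

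There is, however, a genuine gap in the other inclusion, at the step where you claim $G_{0,n_s,s}\equiv\prod_{r\ne s}(b_s-b_r)^{n_r}$ modulo $J^{\Q,s}_\Llb$ via Lemma~\ref{lem inclus}. That lemma is a set-theoretic statement about points of $\OmLb$; it does not produce an identity in $\Q_\Llb/J^{\Q,s}_\Llb$. The ideal $J^{\Q,s}_\Llb$ only imposes conditions at the single point $b_s$: modulo it, the Wronskian acquires a zero of order $n_s$ at $b_s$, but the remaining zeros of its polynomial part are completely unconstrained, so that part does \emph{not} factor as $\prod_r(u-b_r)^{n_r}$. Concretely, $G_{0,n_s,s}$ reduces modulo $J^{\Q,s}_\Llb$ to $\det(M_s)$ times a Vandermonde in the exponents, and $\det(M_s)$ is not a scalar in $\Q_\Llb/J^{\Q,s}_\Llb$. (For $N=1$, $k=2$ one checks directly that the leading-coefficient generator of $I^{\Q,1}_\Llb$ is an independent local parameter, hence not in $J^{\Q,1}_\Llb$.)

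This is not merely a missing detail: as literally stated, the equality of the individual ideals is too strong. What Proposition~\ref{OIX} actually uses is only $I^\Q_\Llb=J^\Q_\Llb$ for the \emph{summed} ideals, and for that your argument repairs cleanly: modulo the full $J^\Q_\Llb$ the polynomial part of the Wronskian is forced to vanish to order $n_s$ at every $b_s$, and since it has degree $n=\sum_s n_s$ it must equal $\prod_r(u-b_r)^{n_r}$, which then gives the needed identification of each $G_{0,n_s,s}$.
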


Let $I^\O_\Llb$ be the ideal in $\O_{\bs\la}\>$ generated by the elements
$G_{ijs}$, $\,i=0,\dots,N$, \>$s=1,\dots,k$, $\,0\le j<n_s-i$, and
the coefficients of polynomials~\Ref{chisO}.

\begin{prop}
\label{OIX}
The algebra \>$\O_\Llb$ is isomorphic to the quotient algebra\/
\>$\O_{\bs\la}/I^\O_\Llb\>$.
\end{prop}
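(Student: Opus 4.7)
The plan is to show that the natural map induced by the inclusion $\O_{\bs\la}\hookrightarrow\Q_\Llb$ descends to an isomorphism
\[
\alpha\colon\O_{\bs\la}/I^\O_\Llb\,\to\,\Q_\Llb/I^\Q_\Llb\,=\,\O_\Llb,
\]
where the identification $\Q_\Llb/I^\Q_\Llb=\O_\Llb$ follows from Lemma~\ref{ideals} and \Ref{OX}. Well-definedness is automatic since $I^\Q_\Llb$ is generated in $\Q_\Llb$ by the same elements that generate $I^\O_\Llb$ in $\O_{\bs\la}$.

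The key step is to show that the zero locus $V(I^\O_\Llb)\subseteq\Om_{\bs\la}$ is contained in $\OmLb$. Let $X\in V(I^\O_\Llb)$. The vanishing $G_{ijs}(X)=0$ for $0\le j<n_s-i$ says that, for each $i$, the polynomial $G_i(u)(X)$ is divisible by $\prod_{s=1}^k(u-b_s)^{n_s-i}$. For $i=0$, combining this with the fact that $G_0(u)$ has $u$-degree $n=\sum_sn_s$ and nonzero constant leading coefficient $\prod_{i<j}(K_j-K_i)$ forces $G_0(u)(X)$ to be proportional to $\prod_s(u-b_s)^{n_s}$; for $i\ge 1$, it then forces $F_i(u)(X)=G_i(u)(X)/G_0(u)(X)$ to have a pole of order at most $i$ at each $b_s$, so $\D_X$ has at most a regular singularity there and is regular at every other point of $\C$. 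The remaining generators of $I^\O_\Llb$, namely the coefficients of the polynomials in \Ref{chisO}, then identify the indicial polynomial of $\D_X$ at $b_s$ with $\prod_{l=1}^N(\al-\la_l^{(s)}-N+l)$, so the exponents at $b_s$ are $\la_N^{(s)},\la_{N-1}^{(s)}+1,\dots,\la_1^{(s)}+N-1$. By Lemma~\ref{lem on intersection}, $X\in\OmLb$.

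Since $\OmLb$ is finite by Lemma~\ref{lem inclus}, so is $V(I^\O_\Llb)$, and hence $R:=\O_{\bs\la}/I^\O_\Llb$ is an Artinian $\C$-algebra whose maximal ideals all correspond to points of $\OmLb$. Any $f\in\O_{\bs\la}$ that is nonvanishing on $\OmLb$ then avoids every maximal ideal of $R$ and is a unit in $R$, so the multiplicative system $S\subset\O_{\bs\la}$ defining $\Q_\Llb=S^{-1}\O_{\bs\la}$ maps into $R^\times$. The localization map $R\to S^{-1}R$ is therefore an isomorphism, and since localization is exact, $S^{-1}R=S^{-1}\O_{\bs\la}/S^{-1}I^\O_\Llb=\Q_\Llb/I^\Q_\Llb=\O_\Llb$; so $\alpha$ is the claimed isomorphism.

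The main obstacle is the inclusion $V(I^\O_\Llb)\subseteq\OmLb$ in the second paragraph: one must carefully translate the algebraic vanishing of the generators of $I^\O_\Llb$, formulated via the row-determinant operator $\Dt^\O_{\bs\la}$ of \Ref{DOlat} and the indicial polynomial $\chi_s^\O(\al)$, into the singularity-theoretic data used by Lemma~\ref{lem on intersection}. The factorization $\Dt^\O_{\bs\la}=\Wr(f_1,\dots,f_N)\cdot\D^\O_{\bs\la}$ is the bridge that matches $G_{i,n_s-i,s}$ with the indicial data of the scalar operator $\D_X$ at $b_s$; everything else in the argument is then formal commutative algebra.
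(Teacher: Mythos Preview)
Your argument is correct and follows essentially the same route as the paper's proof: identify $\O_\Llb$ with $\Q_\Llb/I^\Q_\Llb$ via Lemma~\ref{ideals}, show that the zero locus of $I^\O_\Llb$ lies in $\OmLb$ (the paper simply cites Lemma~\ref{lem on intersection} for this, while you spell out the translation from the generators $G_{ijs}$ and $\chi_s^\O$ to the singularity conditions on $\D_X$), invoke finiteness from Lemma~\ref{lem inclus}, and conclude that localizing at functions nonvanishing on $\OmLb$ changes nothing. Your write-up is more explicit than the paper's on both the geometric step and the Artinian localization argument, but the strategy is identical.
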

\begin{proof}
By Lemma~\ref{ideals}, the ideals\/ $I^\Q_\Llb$ and\/ $J^\Q_\Llb$ coincide,
so the algebra \>$\O_\Llb$ is isomorphic to the quotient algebra
\>$\Q_\Llb/I^\Q_\Llb\>$. By Lemma~\ref{lem on intersection}, the algebraic
set defined by the ideal $I^\O_\Llb$ equals $\Om_\Lbl$\,. The set $\Om_\Lbl$
is finite by Lemma~\ref{lem inclus}. Therefore, the quotient algebras
$\Q_\Llb/I^\Q_\Llb\>$ and $\O_{\bs\la}/I^\O_\Llb\>$ are isomorphic.
\end{proof}

\subsection{Algebra $\O_\Llb$ as a quotient of $\O_\lba$}
\label{barI}
Recall that $\O_\lba=\O_{\bs\la}/{I^\O_\lba}$ is the algebra of functions on
$\pi^{-1}(\bs a)$, see~\Ref{Olaa}. For an element $F\in\O_{\bs\la}$,
we denote by $\bat F$ the projection of $F$ to the quotient algebra $\O_\lba$.

Define the {\it indicial polynomial\/ $\bat\chi_s^\O(\al)$ at\/ $b_s$}
by the formula
\be
\bat\chi_s^\O(\al)\,=\,
\sum_{i=0}^{N}\,\bat G_{i,n_s-i,s}\prod_{j=0}^{N-i-1}(\al-j)\,.
\ee
Let $\bat I^\O_\Llb$ be the ideal in $\O_\lba\>$ generated by the elements
$\bat G_{ijs}$, $\,i=1,\dots,N$, $\,s=1,\dots,k$, $\,0\le j<n_s-i$,
and the coefficients of the polynomials
\be
\bat\chi_s^\O(\al)\,-\,\prod_{\fratop{r=1}{r\ne s}}^k\,(b_s-b_r)^{n_r}\,
\prod_{l=1}^N\,(\al-\la_l^{(s)}-N+l)\,,\qquad s=1,\dots,k\,.\kern-3em
\ee

\begin{prop}
\label{scheme}
The algebra\/ $\O_\Llb$ is isomorphic to the quotient
algebra\/ $\O_\lba/\bat I^\O_\Llb$.
\end{prop}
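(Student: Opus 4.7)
The strategy is to realize both $\O_\Llb$ and $\O_\lba/\bat I^\O_\Llb$ as quotients of the polynomial algebra $\O_{\bs\la}$ and verify that the two defining ideals in $\O_{\bs\la}$ coincide. By Proposition~\ref{OIX}, $\O_\Llb \cong \O_{\bs\la}/I^\O_\Llb$, where $I^\O_\Llb$ is generated by the elements $G_{ijs}$ for $i=0,\dots,N$, $s=1,\dots,k$, $0\le j<n_s-i$, together with the coefficients in $\al$ of $\chi_s^\O(\al)-\prod_{r\ne s}(b_s-b_r)^{n_r}\prod_{l=1}^N(\al-\la_l^{(s)}-N+l)$. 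Since $\O_\lba=\O_{\bs\la}/I^\O_\lba$ by definition, one has $\O_\lba/\bat I^\O_\Llb \cong \O_{\bs\la}/J$, where $J$ is the preimage of $\bat I^\O_\Llb$ under the projection $\O_{\bs\la}\to\O_\lba$; tracing through the definitions, $J$ is generated by $I^\O_\lba$ together with the $G_{ijs}$ for $i=1,\dots,N$ and the coefficients of the same polynomial differences as above.

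Comparing the two generator lists, the only discrepancy is that $I^\O_\Llb$ additionally contains the $i=0$ generators $\{G_{0,j,s}: 0\le j<n_s,\ s=1,\dots,k\}$, while $J$ contains the generators $I^\O_\lba=(\Sig_r-a_r:r=1,\dots,n)$. The proposition therefore reduces to the single ideal equality
\begin{equation*}
\bigl(G_{0,j,s}\ :\ 0\le j<n_s,\ s=1,\dots,k\bigr)\;=\;I^\O_\lba
\end{equation*}
inside $\O_{\bs\la}$.

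To establish this, I would use that $G_0(u)=\Wr(f_1(u),\dots,f_N(u))$ factors by~\Ref{Wr coef} as $\phi(u)P(u)$, where $\phi(u)$ depends only on $u$ (and not on the coordinates $f_{ij}$) and satisfies $\phi(b_s)\ne 0$ for each $s$, while $P(u)=u^n+\sum_{r=1}^n(-1)^r\Sig_r u^{n-r}$ carries the $f_{ij}$-dependence entirely through its coefficients $\Sig_r$. By~\Ref{ab}, $\prod_{s=1}^k(u-b_s)^{n_s}=u^n+\sum_{r=1}^n(-1)^r a_r u^{n-r}$. The elements $G_{0,j,s}$ for $0\le j<n_s$ are the first $n_s$ Taylor coefficients of $\phi(u)P(u)$ at $b_s$; since $\phi(b_s)\ne 0$, they are related to the first $n_s$ Taylor coefficients of $P(u)$ at $b_s$ by an invertible lower-triangular $\C$-linear change of basis. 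The latter Taylor data are in turn linked to the $n$ elements $(\Sig_r-a_r)_{r=1}^n$ via Hermite interpolation at the distinct nodes $b_1,\dots,b_k$ with multiplicities $n_1,\dots,n_k$; this transformation is invertible because $\sum_s n_s=n$ equals the dimension of the space of polynomials of degree $<n$ to which $P(u)-\prod_s(u-b_s)^{n_s}$ belongs. Composing these two invertible transformations shows that the two generating collections produce the same ideal, hence $I^\O_\Llb=J$ and $\O_\Llb\cong\O_\lba/\bat I^\O_\Llb$.

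I do not anticipate a serious obstacle: the argument is a direct comparison of generator sets, the only substantive input being that imposing vanishing of the Wronskian polynomial to the prescribed orders at $b_1,\dots,b_k$ coincides with the fiber condition of the Wronski map, which is essentially built into the definition of $\bs a$ through~\Ref{ab}.
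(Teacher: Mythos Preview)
Your proof is correct and follows essentially the same route as the paper's: the paper's argument is simply that the elements $G_{0js}$, $0\le j<n_s$, $s=1,\dots,k$, generate $I^\O_\lba$ (stated there as ``easy to see''), that the projection of $I^\O_\Llb$ to $\O_\lba$ equals $\bat I^\O_\Llb$, and then Proposition~\ref{OIX} finishes. Your Hermite-interpolation paragraph is precisely the content behind the paper's ``easy to see'', so you have supplied more detail but not a different idea.
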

\begin{proof}
It is easy to see that the elements $G_{0js}$, $j=0,\dots,n_s-1$,
$s=1,\dots,k$, generate the ideal $I^\O_\lba$ in $\O_{\bs\la}$\>. Moreover,
the projection of the ideal $I^\O_\Llb\subset\O_{\bs\la}$ \>to $\O_\lba$ equals
$\bat I^\O_\Llb$\,. Hence, the claim follows from Proposition~\ref{OIX}.
\end{proof}

Recall that the ideal $\Ann(\bat I^\O_\Llb)\subset\O_\lba$ is naturally
an $\O_\Llb$-module.

\begin{cor}
\label{AnnI}
The\/ $\O_\Llb$-module\/ $\Ann(\bat I^\O_\Llb)$ is isomorphic to the coregular
representation of $\O_\Llb$ on the dual space $(\O_\Llb)^*$.
\end{cor}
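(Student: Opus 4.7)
The plan is to apply Lemma~\ref{coreg} directly, treating $\O_\lba$ as the ambient Frobenius algebra and $\bat I^\O_\Llb$ as the distinguished ideal, and then transferring the resulting identification along the isomorphism of Proposition~\ref{scheme}.

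First I would verify that the hypotheses of Lemma~\ref{coreg} are met. The algebra $\O_\lba$ is a finite-dimensional commutative algebra with a nondegenerate invariant bilinear form by Lemma~\ref{local Wr}(ii), so it is a Frobenius algebra. The subset $\bat I^\O_\Llb\subset \O_\lba$ is an ideal by construction (it is defined as the ideal generated by the classes $\bat G_{ijs}$ and the coefficients of the corrected indicial polynomials). This is exactly the data needed by Lemma~\ref{coreg}.

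Next I would identify the quotient. By Proposition~\ref{scheme}, there is a canonical isomorphism $\O_\lba/\bat I^\O_\Llb \simeq \O_\Llb$. Lemma~\ref{coreg} applied to $A=\O_\lba$ and $I=\bat I^\O_\Llb$ produces an isomorphism of $(\O_\lba/\bat I^\O_\Llb)$-modules between $\Ann(\bat I^\O_\Llb)$ and the coregular representation of $\O_\lba/\bat I^\O_\Llb$ on its dual space. Transporting the action through the isomorphism of Proposition~\ref{scheme} converts this into an isomorphism of $\O_\Llb$-modules between $\Ann(\bat I^\O_\Llb)$ and the coregular representation of $\O_\Llb$ on $(\O_\Llb)^*$, which is exactly the statement.

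There is no real obstacle: the work was done upstream, in establishing that $\O_\lba$ is Frobenius (Lemma~\ref{local Wr}(ii), resting on the finiteness and Lemma~\ref{resform}) and in identifying the quotient $\O_\lba/\bat I^\O_\Llb$ with $\O_\Llb$ (Proposition~\ref{scheme}). The only thing to be slightly careful about is that the $\O_\Llb$-module structure on $\Ann(\bat I^\O_\Llb)$ is the one inherited from the regular $\O_\lba$-action, which factors through $\O_\lba/\bat I^\O_\Llb$ precisely because $\bat I^\O_\Llb$ annihilates $\Ann(\bat I^\O_\Llb)$ by definition; this is already built into Lemma~\ref{coreg}, so the corollary follows in one line.
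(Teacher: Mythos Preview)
Your proposal is correct and follows essentially the same route as the paper, which simply cites Lemmas~\ref{frobenius} and~\ref{coreg}. You are in fact slightly more precise: to invoke Lemma~\ref{coreg} one needs the ambient algebra $\O_\lba$ to be Frobenius (Lemma~\ref{local Wr}(ii)), and you correctly make explicit the use of Proposition~\ref{scheme} to identify $\O_\lba/\bat I^\O_\Llb$ with $\O_\Llb$.
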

\begin{proof}
The statement follows from Lemmas~\ref{frobenius} and~\ref{coreg}.
\end{proof}

\section{Three isomorphisms}
\label{iso sec}
\subsection{Auxiliary lemma}
Let $\bs\la$ be a partition of $n$ with at most $N$ parts.
Recall that given a space of quasi-exponentials
$X\in \Oml$,
we denote by $\D_X$ the monic scalar differential operator of order $N$ with
kernel $X$.

Let $M$ be a $\glnt$-module $M$ and $v$ an eigenvector of the Bethe algebra
$\B\subset\Uglnt$ acting on $M$. Then for any coefficient $B_i(u)$ of
the universal differential operator $\D^\B$ we have $B_i(u)v=h_i(u)v$,
where $h_i(u)$ is a scalar series. We call the scalar differential operator
\beq
\label{DBv}
\D^\B_v\>=\,\der^N+\>\sum_{i=1}^N\>h_i(u)\,\der^{N-i}
\eeq
the differential {\it  operator associated with\/} $v$.

We consider $\C^n$ with the symmetric group $S_n$
action defined by permutation of coordinates.

\begin{lem}
\label{generic}
There exist a Zariski open $S_n$-invariant subset $\Tht$ of $\C^n$ and
a Zariski open subset $\Xi$ of $\Oml$ with the following
properties.
\begin{enumerate}
\item[(i)]
For any $(b_1,\dots,b_n)\in\Tht$, there exists a basis of
$\bigl(\otimes_{s=1}^nV(b_s)\bigr)_{\bs\la}$ such that every basis
vector $v$ is an eigenvector of the Bethe algebra and $\D^\B_v=\D_X$ for some
$X\in\Xi$. Moreover, different basis vectors correspond to different points
of\/ $\Xi$.
\item[(ii)]
For any $X\in\Xi$\,, if\/ $b_1,\dots,b_n$ are all roots of the Wronskian\/
$\Wr_X$, then $(b_1,\dots,b_n)\in\Tht$, and there exists a unique up to
proportionality vector $v\in\bigl(\otimes_{s=1}^nV(b_s)\bigr)_{\bs\la}$
such that $v$ is an eigenvector of the Bethe algebra with $\D^\B_v=\D_X$.
\end{enumerate}
\end{lem}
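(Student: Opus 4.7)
The plan is to combine an explicit Bethe ansatz construction for the deformed Bethe algebra $\B$ with a generic simple-spectrum argument and a dimension/degree count coming from the Wronski map of Section~\ref{wronski}. Define $\Tht\subset\C^n$ to be the locus of $(b_1,\dots,b_n)$ with pairwise distinct coordinates for which $\B$ acts on $\bigl(\otimes_{s=1}^n V(b_s)\bigr)_{\bs\la}$ with simple spectrum, and define $\Xi\subset\Oml$ to be the locus of $X$ at which the Wronski map $\pi$ is \'etale and $\Wr_X$ has $n$ pairwise distinct roots. Both conditions are Zariski open in their respective ambient spaces; non-emptiness of $\Tht$ is the only nontrivial point and can be argued by specialization---either via the undeformed Gaudin limit together with the Bethe ansatz of earlier MTV work, or via an asymptotic regime $|b_s|\to\infty$ in which $\B$ degenerates to a commutative algebra built from Cartan elements and the distinct constants $K_1,\dots,K_N$, whose weight-space action manifestly has simple spectrum.

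For part (i), fix $(b_1,\dots,b_n)\in\Tht$. Simple spectrum provides a canonical basis of eigenvectors $v$ of $\B$, each yielding a scalar operator $\D^\B_v$ as in~\Ref{DBv}. Specializing Lemma~\ref{Apol} to $\bs\la^{(s)}=\om_1$ (so $n_s=1$) shows that $h_i(u)$ has at most an order-$i$ pole at each $b_s$, while Lemma~\ref{B deg} together with~\Ref{Bii} shows that $h_i(u)$ is regular at infinity with leading coefficient producing the indicial equation $\prod_{i=1}^N(\al-K_i)=0$. Hence $\D^\B_v$ is Fuchsian on $\C\cup\{\infty\}$ with regular singularities only at $b_1,\dots,b_n$ and $\infty$, whose exponents at infinity, jointly with the weight $\bs\la$ of $v$, match the indicial data of an element of $\Oml$; therefore $X(v):=\ker\D^\B_v\in\Oml$. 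By~\Ref{Wr coef} the Wronskian of $X(v)$ has roots exactly at $b_1,\dots,b_n$, so $X(v)\in\pi^{-1}(\bs a)$ where $\bs a$ is associated to $\bs b$ via~\Ref{ab}. The assignment $v\mapsto X(v)$ is injective because $v$ is recovered from the spectrum, hence from $\D^\B_v$. A degree count via \Ref{dimO} applied with $\bs\la^{(s)}=\om_1$ and Lemma~\ref{local Wr} shows $\dim\bigl(\otimes_s V(b_s)\bigr)_{\bs\la}=|\pi^{-1}(\bs a)|$ for generic $\bs a$; shrinking $\Tht$ so that $\pi^{-1}(\bs a)\subset\Xi$, the map $v\mapsto X(v)$ becomes a bijection onto $\pi^{-1}(\bs a)\cap\Xi$.

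Part (ii) is then the inversion of part (i). For $X\in\Xi$ with simple Wronskian roots $b_1,\dots,b_n$, the tuple $(b_1,\dots,b_n)$ lies in $\Tht$ after a further shrinking of $\Xi$ so that $X$ lies over a point of $\pi(\Xi)\subset\Tht$; the unique preimage of $X$ under the bijection of part (i) supplies the desired $v$, and uniqueness up to scalar is automatic from simple spectrum.

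The principal obstacle is the non-emptiness of $\Tht$, that is, exhibiting a single $\bs b$ at which $\B$ acts with simple spectrum on the weight subspace. Once this is granted, both directions follow mechanically from the differential-operator formalism of Section~\ref{schubert} and the Schubert-theoretic degree formula~\Ref{dimO}. This non-emptiness rests on the explicit Bethe ansatz eigenvector formulas for $\B_{\bs K}$ developed in~\cite{MTV1,MTV5}, which produce as many distinct quasi-exponential spaces in $\pi^{-1}(\bs a)$ as the dimension of the weight subspace for generic $\bs a$, and whose associated eigenvalues uniquely determine the operators $\D^\B_v$.
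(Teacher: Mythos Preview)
Your overall architecture is close in spirit to the paper's, but the direction of the main construction is reversed, and this reversal creates a genuine gap.

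The paper does not start from an abstract simple-spectrum assumption and then recover a space $X$ from the eigenvalues. It goes the other way: for generic $\bs b$ one first produces, via the Bethe ansatz of \cite{MTV4} (Section~10), a collection of generic spaces $X\in\Oml$ over $\bs b$, and for each such $X$ builds an explicit eigenvector $v$; the identity $\D^\B_v=\D_X$ is then the eigenvalue computation of \cite{MTV1}. Both~(i) and~(ii) follow directly from this construction together with the genericity of the Bethe ansatz solutions. In your last paragraph you do invoke exactly this input from \cite{MTV1}, but you frame it as establishing only the non-emptiness of $\Tht$; in fact it is doing essentially the entire job, and once you grant it, your middle two paragraphs are redundant.

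The middle argument, as written, does not stand on its own. First, $\D^\B_v$ is not Fuchsian on $\C\cup\{\infty\}$: by~\Ref{Bii} the coefficients $h_i(u)$ tend to the nonzero constants $(-1)^i\si_i(K_1,\dots,K_N)$ at infinity, so $\infty$ is an \emph{irregular} singular point of rank~$1$ (this is precisely what produces the factors $e^{K_iu}$). Second, and more seriously, knowing only the local exponents at the $b_s$ and the formal exponents $K_1,\dots,K_N$ at infinity does \emph{not} imply $\ker\D^\B_v\in\Oml$. The exponents $0,1,\dots,N-2,N$ at each $b_s$ differ by integers, so logarithmic local solutions are not excluded a priori, and at the irregular point the formal quasi-exponential solutions need not be convergent. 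That the kernel of $\D^\B$ is genuinely spanned by quasi-exponentials $e^{K_iu}p_i(u)$ with $p_i$ polynomial of the correct degree is a separate theorem (this is the content of \cite{MTV4}); it is not a consequence of Lemma~\ref{Apol}, Lemma~\ref{B deg}, and~\Ref{Bii} alone.

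In short: either run the argument in the paper's direction (Bethe ansatz produces $X\mapsto v$ with $\D^\B_v=\D_X$, and genericity gives the basis), or, if you insist on going $v\mapsto\D^\B_v\mapsto X$, you must cite the quasi-exponential kernel theorem of \cite{MTV4} for the step $\ker\D^\B_v\in\Oml$ rather than an indicial-data argument.
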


\begin{proof}
The basis in part (i) is constructed by the Bethe ansatz method as
in Section 10
of \cite{MTV4}. The equality $\D^\B_v=\D_X$ is proved in~\cite{MTV1}.
The existence of an eigenvector $v$ in part (ii) for generic
$X\subset\Om_\lab$, is proved as in Section 10 of \cite{MTV4}.
\end{proof}

\begin{cor}\label{degree}
The degree of the Wronski map equals\/
\,$\dim\>(V^{\otimes n})_{\bs\la}$.
\qed
\end{cor}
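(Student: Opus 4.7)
The plan is to compute the degree of the Wronski map as the cardinality of a generic fiber and to identify that fiber, via Lemma~\ref{generic}, with a basis of the weight subspace $(V^{\otimes n})_{\bs\la}$. Since $\pi$ is a polynomial map of finite degree (used in the proof of Lemma~\ref{local Wr}), its degree equals $|\pi^{-1}(\bs a)|$ for $\bs a$ in a Zariski-open subset of $\C^n$.

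First I would pick a point $\bs a=(a_1,\dots,a_n)\in\C^n$ such that the polynomial $u^n+\sum_{s=1}^n(-1)^s a_s u^{n-s}$ has $n$ distinct roots $b_1,\dots,b_n$, the tuple $(b_1,\dots,b_n)$ lies in $\Tht$, and $\pi^{-1}(\bs a)\subset\Xi$. Such $\bs a$ form a Zariski-open dense subset of $\C^n$: the discriminant locus is closed; $\Tht$ is open and $S_n$-invariant, so its image under the elementary-symmetric-function map is open; and $\pi(\Oml\setminus\Xi)$ is a proper closed subset because $\pi$ is finite and $\Xi$ is open.

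Part~(i) of Lemma~\ref{generic}, applied to this $(b_1,\dots,b_n)$, produces a basis of $\bigl(\otimes_{s=1}^n V(b_s)\bigr)_{\bs\la}$ consisting of Bethe eigenvectors $v$, each attached to a distinct $X\in\Xi$ with $\D^\B_v=\D_X$. Since $\otimes_{s=1}^n V(b_s)\cong V^{\otimes n}$ as $\gln$-modules, this basis has $\dim(V^{\otimes n})_{\bs\la}$ elements. Each such $X$ is recovered as the kernel of $\D_X$, and the singular points of $\D_X$ are precisely the roots of $\Wr_X$, which here are $\{b_1,\dots,b_n\}$; hence $\pi(X)=\bs a$, and $|\pi^{-1}(\bs a)|\ge\dim(V^{\otimes n})_{\bs\la}$.

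For the reverse inequality, any $X\in\pi^{-1}(\bs a)$ lies in $\Xi$ by our choice of $\bs a$, and part~(ii) of Lemma~\ref{generic} then supplies a unique (up to scalar) Bethe eigenvector $v\in\bigl(\otimes_{s=1}^n V(b_s)\bigr)_{\bs\la}$ with $\D^\B_v=\D_X$. This $v$ must be proportional to one of the basis vectors from part~(i), and different $X$ yield different basis vectors, so the assignment $X\mapsto v$ is injective. Combining the two bounds gives $|\pi^{-1}(\bs a)|=\dim(V^{\otimes n})_{\bs\la}$, which is the sought degree. There is essentially no real obstacle: all the substantive work is already packaged in Lemma~\ref{generic}, and only the routine generic-point bookkeeping remains.
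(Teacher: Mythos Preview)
Your proof is correct and follows the same line as the paper, which records the corollary as an immediate consequence of Lemma~\ref{generic} without further argument. One small polish: in your step establishing $\pi(X)=\bs a$, rather than invoking ``singular points of $\D_X$'' (which only identifies the root set, not multiplicities), it is cleaner to compare the first coefficients directly via \Ref{B1} and \Ref{F1}: the eigenvalue of $B_1(u)$ on $\otimes_{s=1}^n V(b_s)$ is $-\sum_i K_i-\sum_{s=1}^n(u-b_s)^{-1}$, while $F_1(u)=-\Wr_X'/\Wr_X$, so the equality $\D^\B_v=\D_X$ forces $\Wr_X$ to have simple roots exactly at $b_1,\dots,b_n$.
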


\subsection{Isomorphism of algebras $\O_{\bs\la}$ and $\B_{\bs\la}$}
Consider the $\B$-module $(\V^S)_{\bs\la}$. Denote $(\V^S)_{\bs\la}$ by
$\Ml$ and  the Bethe
algebra associated with $\VSl$ by  $\B_{\bs\la}$.

Consider the map
\vvn-.1>
\be
\tau_{\bs\la}:\O_{\bs\la}\to\B_{\bs\la}\,,\qquad
F_{ij}\mapsto\hat B_{ij}\,,
\vv.2>
\ee
where the elements $F_{ij}\in\O_{\bs\la}$ are defined by~\Ref{Fi} and
\>$\hat B_{ij}\in\B_{\bs\la}$ are the images of the elements $B_{ij}\in\B$,
defined by~\Ref{Bi}.

\begin{thm}
\label{first}
The map $\tau_{\bs\la}$ is a well-defined isomorphism of algebras.
\end{thm}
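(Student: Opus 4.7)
The plan is to establish $\tau_{\bs\la}$ as a surjective $\C[\bs z]^S$-algebra homomorphism between two algebras of the same generic rank $R:=\dim(V^{\otimes n})_{\bs\la}$, and then deduce injectivity via a torsion-freeness argument. The first step is to identify compatible $\C[\bs z]^S$-structures on both sides. The coefficients $\Sig_1,\dots,\Sig_n\in\O_{\bs\la}$ from \Ref{Wr coef} are algebraically independent (the Wronski map $\pi:\Om_{\bs\la}\to\C^n$ has finite fibers by Lemma~\ref{local Wr}(i)), so they embed $\C[\bs z]^S\cong\C[\sigma_1(\bs z),\dots,\sigma_n(\bs z)]$ into $\O_{\bs\la}$. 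On the Bethe side, Lemma~\ref{Uz} identifies the image of $U(\z_N[t])\subset\B$ in $\End(\V^S)$ with multiplication by $\C[\bs z]^S$. Comparing $F_1(u)=-W'(u)/W(u)$ with the action of $B_1(u)=-\sum_i K_i-\sum_i e_{ii}(u)$ on $\V^S$ via \Ref{action} yields $\tau_{\bs\la}(\Sig_s)=\sigma_s(\bs z)$, so the two copies of $\C[\bs z]^S$ are identified under $\tau_{\bs\la}$.

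For well-definedness, since $\O_{\bs\la}$ is generated as a $\C$-algebra by the $F_{ij}$ (Lemma~\ref{coef alg}), the task is to show that any polynomial relation $P(F_{ij})=0$ holding in $\O_{\bs\la}$ implies $P(\hat B_{ij})=0$ in $\End(\VSl)$. I will fix distinct $b_1,\dots,b_n\in\C$ with elementary symmetric functions $\bs a$; by \Ref{ab} the associated multiplicities are $n_s=1$, so $W_1(b_s)=V(b_s)$ and Lemma~\ref{factor=weyl} gives $\V^S/I^\V_{\bs a}\cong\otimes_{s=1}^n V(b_s)$, whence $\VSl/\mathfrak m_{\bs a}\VSl\cong(\otimes_{s=1}^n V(b_s))_{\bs\la}$. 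For $(b_1,\dots,b_n)$ in the dense subset $\Tht$ of Lemma~\ref{generic}, $\B$ acts on this weight subspace with a basis of eigenvectors, the eigenvalue of $B_{ij}$ on the eigenvector labeled by $X\in\Xi$ being $F_{ij}(X)$. Any relation $P(F_{ij})=0$ in $\O_{\bs\la}$ then kills every eigenvalue of $P(\hat B_{ij})$, so $P(\hat B_{ij})$ vanishes on $(\otimes_{s=1}^n V(b_s))_{\bs\la}$ for generic $\bs b$. The module $\VSl$ is a free $\C[\bs z]^S$-module (by Lemma~\ref{VSfree} the $\bs\la$-weight summand of the free module $\V^S$ is a finitely generated projective, hence free, module over the polynomial ring $\C[\bs z]^S$), and $P(\hat B_{ij})$ is $\C[\bs z]^S$-linear (Lemma~\ref{Uz}); vanishing at a Zariski-dense set of closed fibers therefore forces $P(\hat B_{ij})=0$ on $\VSl$.

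Surjectivity is immediate since $\hat B_{ij}=\tau_{\bs\la}(F_{ij})$ generate $\B_{\bs\la}$ by definition. For injectivity, the Wronski map $\pi$ is a finite morphism of smooth affine $n$-spaces of constant fiber length (Lemma~\ref{local Wr}(i)), hence flat by miracle flatness, of degree $R$ (Corollary~\ref{degree}); so $\O_{\bs\la}$ is a free $\C[\bs z]^S$-module of rank $R$. At a generic closed point $\bs a\in\on{Spec}\,\C[\bs z]^S$, the fiber $\O_{\bs\la}/\mathfrak m_{\bs a}\O_{\bs\la}$ has dimension $R$ and surjects onto $\B_{\bs\la}/\mathfrak m_{\bs a}\B_{\bs\la}$, which in turn surjects onto the image of $\B$ in $\End\bigl((\otimes_{s=1}^n V(b_s))_{\bs\la}\bigr)$; by the simple-spectrum statement of Lemma~\ref{generic}, the latter image has dimension $R$. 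Squeezing gives $\dim_\C\B_{\bs\la}/\mathfrak m_{\bs a}\B_{\bs\la}=R$, so $\ker\tau_{\bs\la}$ is a $\C[\bs z]^S$-submodule of the torsion-free module $\O_{\bs\la}$ whose fiber vanishes on a dense open set of $\on{Spec}\,\C[\bs z]^S$; being finitely generated torsion-free with zero generic fiber, $\ker\tau_{\bs\la}=0$. The main obstacle will be the fiberwise-to-global step in well-definedness, which fundamentally depends on the freeness of $\VSl$ over $\C[\bs z]^S$ and on the Bethe-ansatz description of the generic joint spectrum supplied by Lemma~\ref{generic}.
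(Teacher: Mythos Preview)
Your proof is correct. The well-definedness and surjectivity arguments match the paper's essentially verbatim: evaluate at a Zariski-dense set of closed fibers $(b_1,\dots,b_n)\in\Tht$ supplied by Lemma~\ref{generic}(i), and use that the $\hat B_{ij}$ generate $\B_{\bs\la}$ by definition.

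Where you diverge from the paper is in the injectivity step. The paper argues pointwise, using part~(ii) of Lemma~\ref{generic}: if $R(F_{ij})\neq 0$ in $\O_{\bs\la}$, pick a generic $X\in\Xi$ with $R(F_{ij})(X)\neq 0$; then part~(ii) produces an eigenvector $v$ with $\D^\B_v=\D_X$, so $R(\hat B_{ij})$ has nonzero eigenvalue on $v$ and is nonzero. Your route is instead structural: you compute the generic rank of $\B_{\bs\la}$ over $\C[\bs z]^S$ and deduce $\ker\tau_{\bs\la}=0$ from torsion-freeness. This works because the locally-free locus of a finitely generated module over a Jacobson domain is open and contains closed points, so the fiber dimension $R$ at generic $\bs a$ forces $\on{rank}\B_{\bs\la}=R$, whence $\on{rank}\ker\tau_{\bs\la}=0$. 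Your approach avoids invoking part~(ii) of Lemma~\ref{generic} directly and instead extracts everything from the simple-spectrum statement in part~(i) together with Corollary~\ref{degree}; it also makes the $\C[\bs z]^S$-module picture explicit, which the paper only develops afterwards in Lemma~\ref{symm OK}. The paper's argument is shorter, but yours foregrounds the finite-flat geometry that drives the later isomorphisms $\tau_\lba$ and $\tau_\Llb$.

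One point worth tightening: when you say the fiber of $\ker\tau_{\bs\la}$ vanishes on a dense open, note that the right-exact sequence $K/\mathfrak m K\to\O_{\bs\la}/\mathfrak m\O_{\bs\la}\to\B_{\bs\la}/\mathfrak m\B_{\bs\la}\to 0$ only shows the \emph{image} of $K/\mathfrak m K$ is zero, not $K/\mathfrak m K$ itself. The cleaner justification is the one you implicitly use next: the generic rank of $K$ equals its fiber dimension over the open locus where it is locally free, and that rank is $R-R=0$ by your squeeze; then torsion-freeness finishes.
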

\begin{proof}
Let a polynomial $R(F_{ij})$ in generators $F_{ij}$ be equal to zero
in $\O_{\bs\la}$. Let us prove that the corresponding polynomial
$R(\hat B_{ij})$ is equal to zero in the $\B_{\bs\la}$. Indeed,
$R(\hat B_{ij})$ is a polynomial in $z_1,\dots,z_n$ with values in
$\End\bigl((V^{\otimes n})_{\bs\la}\bigr)$. Let $\Tht$ be the set,
introduced in Lemma~\ref{generic}, and $(b_1,\dots,b_n)\in\Tht$.
Then by part (i) of Lemma~\ref{generic}, the value of the polynomial
$R(\hat B_{ij})$ at $z_1=b_1,\dots,z_n=b_n$ equals zero. Hence, the polynomial
$R(\hat B_{ij})$ equals zero identically and the map $\tau_{\bs\la}$ is
well-defined.

Let a polynomial $R(F_{ij})$ in generators $F_{ij}$ be a nonzero element of
$\O_{\bs\la}$. Then the value of $R(F_{ij})$ at a generic point
$X\in\Om_\lab(\infty)$ is not equal to zero. Then by part (ii) of
Lemma~\ref{generic}, the polynomial $R(\hat B_{ij})$ is not identically equal
to zero. Therefore, the map $\tau_{\bs\la}$ is injective.

Since the elements $\hat B_{ij}$ generate the algebra $\B_{\bs\la}\,$,
the map $\tau_{\bs\la}$ is surjective.
\end{proof}

The algebra $\C[z_1,\dots,z_n]^S$ is embedded into the algebra $\B_{\bs\la}$
as the subalgebra of operators of multiplication by symmetric polynomials,
see Lemmas~\ref{Uz} and formula~\Ref{Bii}. The algebra $\C[z_1,\dots,z_n]^S$
is embedded into the algebra $\O_{\bs\la}$\>, the elementary symmetric
polynomials $\si_1(\bs z),\dots,\si_n(\bs z)$ being mapped to the elements
$\Sig_1,\dots,\Sig_n$, defined by \Ref{Wr coef}. These embeddings give
the algebras $\B_{\bs\la}$ and $\O_{\bs\la}$ the structure of
$\C[z_1,\dots,z_n]^S\<$-modules.

\begin{lem}\label{symm OK}
The map $\,\tau_{\bs\la}:\O_{\bs\la}\to\B_{\bs\la}$ is
an isomorphism of $\,\C[z_1,\dots,z_n]^S\<$-modules, that is,
$\tau_{\bs\la}(\Sig_i)=\si_i(\bs z)$ \>for all\/ \>$i=1,\dots,n$.
\end{lem}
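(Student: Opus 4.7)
The plan is to check the identity $\tau_{\bs\la}(\Sig_i)=\si_i(\bs z)$ by matching the subleading coefficient $F_1(u)$ of the universal differential operator on both sides and then inverting Newton's identities.

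On the geometric side, Abel's identity for the monic operator $\D^\O_{\bs\la}=\der^N+\sum F_i(u)\der^{N-i}$ gives $G_0'/G_0=-F_1(u)$, where $G_0(u)=\Wr(f_1,\dots,f_N)$. By \Ref{Wr coef}, $G_0(u)=e^{(\sum_i K_i)u}\cdot\prod_{i<j}(K_j-K_i)\cdot P(u)$ with $P(u):=u^n+\sum_{s=1}^n(-1)^s\Sig_s u^{n-s}$, so taking the logarithmic derivative yields
\[
F_1(u)=-\sum_{i=1}^N K_i-P'(u)/P(u).
\]
On the algebraic side, \Ref{B1} gives $B_1(u)=-\sum_i(K_i+e_{ii}(u))$. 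The key computation is that $\sum_i e_{ii}$ acts as the identity on the standard module $V$ (each of its weight vectors is an $e_{ii}$-eigenvector with eigenvalue $1$ for a single $i$), so the action formula \Ref{action} forces $\sum_i e_{ii}(u)$ to act on $\V$ (and hence on $\VSl$) as multiplication by $\sum_{s=1}^n 1/(u-z_s)=W'(u)/W(u)$, where $W(u):=\prod_{s=1}^n(u-z_s)$. This gives
\[
\hat B_1(u)=-\sum_{i=1}^N K_i-W'(u)/W(u).
\]

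Since $\tau_{\bs\la}$ sends each $F_{1j}$ to $\hat B_{1j}$, applying it coefficient-by-coefficient to $F_1(u)$ produces $\hat B_1(u)$; the two displays above then give $\tau_{\bs\la}(P'/P)=W'/W$ as formal Laurent series in $u^{-1}$. Expanding via $Q'/Q=n/u+\sum_{k\ge 1}p_k(Q)u^{-k-1}$, where $p_k(Q)$ is the $k$-th Newton power sum of the roots of a monic polynomial $Q$ of degree $n$, I obtain $\tau_{\bs\la}(p_k(P))=p_k(W)=\sum_{s=1}^n z_s^k$ for every $k\ge 1$. Newton's identities form an invertible triangular system over $\Q$ expressing the $i$-th elementary symmetric function of the roots as a universal polynomial in $p_1,\dots,p_i$; applied to $P$ they write $\Sig_i$ as a fixed polynomial in $p_1(P),\dots,p_i(P)$, and the same universal polynomial produces $\si_i(\bs z)$ from $p_1(W),\dots,p_i(W)$. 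Combining gives $\tau_{\bs\la}(\Sig_i)=\si_i(\bs z)$.

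The only real subtlety is the clean formula $\sum_i e_{ii}(u)=W'(u)/W(u)$ on $\V$; once that is in hand, the argument reduces to a formal manipulation of generating series plus Newton's identities.
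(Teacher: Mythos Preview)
Your proof is correct and follows the same approach as the paper: both rest on Abel's identity $F_1(u)=-\Wr'/\Wr$ together with formula~\Ref{B1} for $B_1(u)$, the paper simply citing these two ingredients while you carefully fill in the passage from $\tau_{\bs\la}(P'/P)=W'/W$ to the equality of elementary symmetric functions. Your detour through Newton's identities is fine, though one can shortcut it by noting that $\tilde P:=u^n+\sum_s(-1)^s\tau_{\bs\la}(\Sig_s)u^{n-s}$ and $W$ are both monic of degree $n$ with $\tilde P'/\tilde P=W'/W$, forcing $\tilde P=W$ directly.
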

\begin{proof}
The claim follows from the fact that
\beq
\label{F1}
F_1(u)\,=\,-\,\frac{\Wr'(f_1(u),\dots,f_N(u))}{\Wr(f_1(u),\dots,f_N(u))}\ ,
\eeq
where $\,'$ denotes the derivative with respect to $u$, and from
formula~\Ref{B1}.
\end{proof}

\begin{lem}
\label{taudeg}
For any homogeneous element\/ \>$F\in\O_{\bs\la}$\>, the degrees of homogeneous
components of\/ \,$\tau_{\bs\la}(F)\in\B_{\bs\la}$ do not exceed\/ \,$\deg F$.
\end{lem}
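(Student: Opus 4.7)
The approach is to reduce the statement to a bound on the polynomial generators $f_{ij}$ of $\O_{\bs\la}$, then use the recursion from the proof of Lemma~\ref{coef alg} together with Lemma~\ref{B deg}.

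Since $\O_{\bs\la}$ is the polynomial algebra on the $f_{ij}$ with $\deg f_{ij}=j$ and $\tau_{\bs\la}$ is a ring homomorphism, it suffices to show $\deg\tau_{\bs\la}(f_{ij})\le j$ for every generator. Indeed, any homogeneous monomial $F=\prod_l f_{i_l,j_l}^{\alpha_l}$ of degree $d=\sum_l\alpha_l j_l$ satisfies $\tau_{\bs\la}(F)=\prod_l\tau_{\bs\la}(f_{i_l,j_l})^{\alpha_l}$, so the homogeneous components of $\tau_{\bs\la}(F)$ have degrees bounded by $\sum_l\alpha_l j_l=d$, and the full lemma follows by linearity. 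Moreover, by Lemma~\ref{B deg}, each $\hat B_{kr}=\tau_{\bs\la}(F_{kr})$ has homogeneous components of degree at most $\max(r-1,0)$.

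The key inductive claim on $j$ is: for each $i$, the element $f_{ij}$ can be written as a polynomial in the $F_{kr}$'s, with scalar coefficients rational in $K_1,\dots,K_N$, such that every monomial $\prod_l F_{k_l,r_l}^{\alpha_l}$ appearing satisfies
\[
\sum_l\alpha_l\max(r_l-1,0)\,\le\,j.
\]
The base case $j=0$ is trivial as $f_{i,0}=1$. For the inductive step, I would solve relation~\Ref{fij} for $f_{ij}$ by dividing by the nonzero scalar $j\prod_{l\ne i}(K_i-K_l)$, expressing $f_{ij}$ as a linear combination of terms $F_{lr}f_{is}$ in two ranges: (a) $r\in\{0,1\}$ and $s\le j-1$, and (b) $r\in\{2,\dots,j\}$ and $s\le j-r+1$. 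In case (a), $\max(r-1,0)=0$ and by induction the bound on $f_{is}$ is $\le s\le j-1$, giving a total bound $\le j-1$. In case (b), $\max(r-1,0)=r-1$ and the bound on $f_{is}$ is $\le s\le j-r+1$, giving a total bound $\le(r-1)+(j-r+1)=j$. In both cases the monomial bound is $\le j$, closing the induction. Applying $\tau_{\bs\la}$ to such an expression and invoking Lemma~\ref{B deg} on each factor $\hat B_{k_l,r_l}$ then yields $\deg\tau_{\bs\la}(f_{ij})\le j$.

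The main obstacle is essentially combinatorial bookkeeping: one must verify that the constraint $s\le j-r+1$ imposed in the second sum of~\Ref{fij} when $r\ge 2$ compensates exactly for the weight $r-1$ contributed by $F_{lr}$ under Lemma~\ref{B deg}. It is precisely this cancellation of index contributions that makes the filtrations on $\O_{\bs\la}$ and $\B_{\bs\la}$ compatible under $\tau_{\bs\la}$; beyond tracking these weights, no deeper difficulty is involved.
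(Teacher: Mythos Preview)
Your argument is correct and follows exactly the approach of the paper: reduce to the generators $f_{ij}$, then use the recursion~\Ref{fij} together with Lemma~\ref{B deg} by induction on $j$. You have simply spelled out the bookkeeping that the paper leaves implicit.
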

\begin{proof}
It suffices to prove the claim for the generators $f_{ij}\in \Ol$. In that case,
the statement follows from formula~\Ref{fij} and Lemma~\ref{B deg} by induction
with respect to $j$, starting from $j=1$.
\end{proof}

Given a vector $v\in\M_{\bs\la}$, consider a linear map
\be
\mu_v:\O_{\bs\la}\to\M_{\bs\la}\,,\qquad F\mapsto\tau_{\bs\la}(F)\,v\,.
\ee

\begin{lem}
\label{muinject}
If\/ $v\in\M_{\bs\la}$ is nonzero, then the map \>$\mu_v$ is injective.
\end{lem}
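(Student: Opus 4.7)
The plan is to read injectivity of $\mu_v$ through the algebra isomorphism $\tau_{\bs\la}\colon\O_{\bs\la}\to\B_{\bs\la}$ of Theorem~\ref{first}: the claim is equivalent to $\M_{\bs\la}$ being torsion-free as an $\O_{\bs\la}$-module, since $\O_{\bs\la}$ is a domain (being a polynomial algebra in the $f_{ij}$). I will establish torsion-freeness by localizing over the subring $R:=\C[z_1,\dots,z_n]^S\subset\O_{\bs\la}$ and observing that after localization one is working over a field, so no equation $F'v'=0$ with both factors nonzero can occur. The only non-formal external ingredient is the finiteness of the Wronski map (Propositions~3.1 and~4.2 of \cite{MTV5}, already used in the proof of Lemma~\ref{local Wr}); everything else is bookkeeping of structures already set up in the paper.

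The ingredients are assembled as follows. By Lemma~\ref{VSfree}, $\V^S$ is a free $R$-module, so its $R$-submodule $\M_{\bs\la}$ is torsion-free over $R$. The finiteness of the Wronski map translates into $\O_{\bs\la}$ being a finite module over its subring $\C[\Sig_1,\dots,\Sig_n]$, which Lemma~\ref{symm OK} identifies with $R$ via $\Sig_i\mapsto\si_i(\bs z)$. Moreover Lemma~\ref{symm OK} says that $\tau_{\bs\la}$ is $R$-linear, so the $R$-actions on $\M_{\bs\la}$ (by multiplication with symmetric polynomials, through Lemma~\ref{Uz}) and on $\O_{\bs\la}$ (by the inclusion of $\C[\Sig_1,\dots,\Sig_n]$) are compatible with the $\O_{\bs\la}$-module structure on $\M_{\bs\la}$.

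With these in hand, let $K=\on{Frac}(R)$ and set $A':=\O_{\bs\la}\otimes_R K$, $M':=\M_{\bs\la}\otimes_R K$. Since $\O_{\bs\la}$ is a domain finite over $R$, the algebra $A'$ sits inside $\on{Frac}(\O_{\bs\la})$ and, being finite-dimensional over the field $K$, is itself a field. Now suppose $v\in\M_{\bs\la}$ and $F\in\O_{\bs\la}$ are both nonzero but $\tau_{\bs\la}(F)v=0$. By torsion-freeness over $R$ (for $\M_{\bs\la}$) and by $\O_{\bs\la}$ being a domain, the images $v'\in M'$ and $F'\in A'$ are both nonzero; the identity $F'v'=0$ in $M'$ together with the invertibility of $F'$ in the field $A'$ then forces $v'=0$, a contradiction. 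The step I expect to require the most care is the finiteness of $\O_{\bs\la}$ over $R$, since it is the input from outside the paper; once granted, the field property of $A'$ and the torsion-freeness argument are automatic.
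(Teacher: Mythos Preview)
Your proof is correct and follows the same strategy as the paper's: both use that $\O_{\bs\la}$ is a domain finite over $R=\C[z_1,\dots,z_n]^S$ and that $\M_{\bs\la}$ is $R$-torsion-free (via Lemma~\ref{VSfree}) to conclude that $\M_{\bs\la}$ is $\O_{\bs\la}$-torsion-free. The paper phrases the last step tersely as ``the kernel is an ideal with zero intersection with $R$, hence zero,'' whereas you make the same deduction explicit by localizing at $K=\on{Frac}(R)$ and observing that $\O_{\bs\la}\otimes_R K$ is a field; this is a stylistic difference, not a substantive one.
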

\begin{proof}
The algebra $\O_{\bs\la}$ is a free polynomial algebra containing
the subalgebra $\C[z_1,\dots,z_n]^S$. By part (i) of Lemma~\ref{local Wr},
the quotient algebra $\O_{\bs\la}/\C[z_1,\dots,z_n]^S$ is finite-dimensional.
The kernel of $\mu_v$ is an ideal in $\B_{\bs\la}$ which has zero
intersection with $\C[z_1,\dots,z_n]^S$ and, therefore, is the zero ideal.
\end{proof}

The graded character of $\V^S_{\bs\la}$ is given by formual
\Ref{forMula}.  Fix a nonzero vector $v\in\V^S_{\bs\la}$ of degree
0. Such a vector is unique up to multiplication by a nonzero number.
Then the map $\mu_v$ will be denoted by $\mu_{\bs\la}$.

\begin{thm}
\label{first1}
The map\/ \>$\mu_{\bs\la}:\O_{\bs\la}\to
\Ml$ is a vector isomorphism.
This isomorphism preserves the degree of elements.
The maps\/
$\tau_{\bs\la}$ and\/ $\mu_{\bs\la}$ intertwine the action of multiplication
operators on $\O_{\bs\la}$ and the action of the Bethe algebra $\B_{\bs\la}$
on $\Ml$, that is, for any $F,G\in\O_{\bs\la}$, we have
\beq
\label{mutau}
\mu_{\bs\la}(FG)\,=\,\tau_{\bs\la}(F)\,\mu_{\bs\la}(G)\,.
\eeq
In other words, the maps\/ $\tau_{\bs\la}$ and\/ $\mu_{\bs\la}$ give
an isomorphism of the regular representation of\/ $\O_{\bs\la}$ and
the\/ $\B_{\bs\la}$-module $\Ml$.
\end{thm}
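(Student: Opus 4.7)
The plan is to combine the algebra-homomorphism property of $\tau_{\bs\la}$, the injectivity principle for $\mu_v$, and a graded-character equality, in that order.

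The intertwining identity is purely algebraic: since $\tau_{\bs\la}$ is an algebra homomorphism by Theorem~\ref{first},
\[
\mu_{\bs\la}(FG)\,=\,\tau_{\bs\la}(FG)\>v\,=\,\tau_{\bs\la}(F)\>\tau_{\bs\la}(G)\>v\,=\,\tau_{\bs\la}(F)\>\mu_{\bs\la}(G),
\]
so as soon as $\mu_{\bs\la}$ is known to be a vector-space isomorphism, the pair $(\tau_{\bs\la},\mu_{\bs\la})$ will automatically identify the regular representation of $\O_{\bs\la}$ with the $\B_{\bs\la}$-module $\Ml$. Injectivity of $\mu_{\bs\la}$ is immediate from Lemma~\ref{muinject}: by construction $\mu_{\bs\la}=\mu_v$ for the chosen nonzero degree-$0$ vector $v$.

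The bijectivity and the degree statement then fall out of a single graded dimension count. By Lemma~\ref{char O}, $\ch_{\O_{\bs\la}}(q)=\prod_{i=1}^N(q)_{\la_i}^{-1}$, and by Lemma~\ref{lem on char of VSl}, $\ch_{\Ml}(q)=\prod_{i=1}^N(q)_{\la_i}^{-1}$ as well; hence $\dim\O_{\bs\la}^{\le d}=\dim\Ml^{\le d}$ for every $d$, where the superscript ${\le d}$ denotes the span of all homogeneous components of degree at most $d$. Next, Lemma~\ref{taudeg} tells us that for homogeneous $F\in\O_{\bs\la}$ of degree $d$ the operator $\tau_{\bs\la}(F)\in\B_{\bs\la}$ is a sum of homogeneous components of degrees at most $d$; since $v$ has degree $0$, it follows that $\mu_{\bs\la}$ carries $\O_{\bs\la}^{\le d}$ into $\Ml^{\le d}$. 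An injective filtered map between spaces of equal graded character is automatically a bijection on every filtered piece, which yields simultaneously the surjectivity of $\mu_{\bs\la}$ and its compatibility with the degree.

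The only nontrivial ingredients are the two character computations (Lemmas~\ref{char O} and~\ref{lem on char of VSl}) together with the filtered behavior of $\tau_{\bs\la}$ from Lemma~\ref{taudeg}; I expect the genuine obstacle, already dispatched upstream, to be the matching of the character of $\Ml=(\V^S)_{\bs\la}$ with the Poincar\'e series of the polynomial algebra $\O_{\bs\la}$. Once those are in hand, the rest of the argument is formal.
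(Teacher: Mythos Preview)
Your proof is correct and follows essentially the same approach as the paper: injectivity via Lemma~\ref{muinject}, the filtered bound via Lemma~\ref{taudeg}, surjectivity from the equality of graded characters (Lemmas~\ref{char O} and~\ref{lem on char of VSl}), and the intertwining identity from Theorem~\ref{first}. Your presentation is slightly more detailed in explaining why an injective filtered map between spaces of equal graded character must be bijective on each filtered piece, but the underlying argument is identical.
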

\begin{proof}
The map $\mu_{\bs\la}$ is injective by Lemma~\ref{muinject}.
The map $\mu_{\bs\la}$ does not increase the degree by Lemma \ref{taudeg}.
The graded  characters of $\Ol$ and $\Ml$ are the same by Lemmas
\ref{char O} and \ref{lem on char of VSl}. 
Hence, the map $\mu_{\bs\la}$ is surjective.
Formula~\Ref{mutau} follows from Theorem~\ref{first}.
\end{proof}

\subsection{Isomorphism of algebras $\O_\lba$ and $\B_\lba$}
Let $\bs a=(a_1,\dots,a_N)$ be a sequence of complex numbers.
Let distinct complex numbers $b_1,\dots,b_k$ and integers $n_1,\dots,n_k$
be given by~\Ref{ab}.

Let $I^\B_\lba\subset\B_{\bs\la}$ be the ideal generated by
the elements $\si_i(\bs z)-a_i$, $i=1,\dots,n$. Consider the subspace
$\,I^\M_\lba=I^\B_\lba\>\M_{\bs\la}$,
where $I^\V_{\bs a}$ is given by~\Ref{IVa}. Recall that the ideal
$I^\O_\lba$ is defined in Section~\ref{wronski}.

\begin{lem}
\label{identify}
We have
\vvn.2>
\be
\tau_{\bs\la}({I^\O_\lba})={I^\B_\lba}\,,\qquad
\mu_{\bs\la}({I^\O_\lba})={I^\M_\lba}\,,\qquad
\B_\lba=\B_{\bs\la}/{I^\B_\lba}\,,\qquad
\M_\lba=\M_{\bs\la}/{I^\M_\lba}\,.
\vv.4>
\ee
\end{lem}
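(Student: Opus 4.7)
The plan is to establish the four equalities in the order (i) $\tau_{\bs\la}(I^\O_\lba)=I^\B_\lba$, (ii) $\mu_{\bs\la}(I^\O_\lba)=I^\M_\lba$, (iv) $\M_\lba=\M_{\bs\la}/I^\M_\lba$, (iii) $\B_\lba=\B_{\bs\la}/I^\B_\lba$, since (ii) depends on (i), and (iii) requires (iv) together with the cyclicity of $\M_{\bs\la}$.

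For (i), the algebra isomorphism $\tau_{\bs\la}$ from Theorem~\ref{first} sends $\Sig_i$ to $\si_i(\bs z)$ by Lemma~\ref{symm OK}, so it takes the generators $\Sig_i-a_i$ of $I^\O_\lba$ to the generators $\si_i(\bs z)-a_i$ of $I^\B_\lba$. For (ii), I would set $v_0=\mu_{\bs\la}(1)$ and invoke Theorem~\ref{first1}, which gives $\mu_{\bs\la}(F)=\tau_{\bs\la}(F)v_0$ for all $F\in\O_{\bs\la}$ and $\M_{\bs\la}=\B_{\bs\la}v_0$. Combining with (i) then yields $\mu_{\bs\la}(I^\O_\lba)=\tau_{\bs\la}(I^\O_\lba)v_0=I^\B_\lba\B_{\bs\la}v_0=I^\B_\lba\M_{\bs\la}=I^\M_\lba$.

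The heart of the argument is (iv). My plan is to apply Lemma~\ref{factor=weyl}, which identifies $\V^S/I^\V_{\bs a}$ with $\otimes_{s=1}^kW_{n_s}(b_s)$ as $\glnt$-modules. Passing to the weight-$\bs\la$ subspace gives $\M_\lba=\M_{\bs\la}/(I^\V_{\bs a})_{\bs\la}$, so the claim reduces to the identity $(I^\V_{\bs a})_{\bs\la}=I^\M_\lba$. To prove this I would invoke Lemma~\ref{Uz} together with formula~\Ref{Bii}, which say that each elementary symmetric polynomial $\si_s(\bs z)\in\C[z_1,\dots,z_n]^S$ lies in the image of the Bethe algebra on $\V^S$ and acts there by multiplication. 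Since the $\si_s-a_s$ are of weight zero and preserve weight subspaces, the weight-$\bs\la$ component of $I^\V_{\bs a}$ equals $\sum_{s=1}^n(\si_s-a_s)(\V^S)_{\bs\la}=I^\B_\lba\M_{\bs\la}=I^\M_\lba$.

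For (iii), the $\B$-action on $\M_\lba=\M_{\bs\la}/I^\M_\lba$ factors through $\B_{\bs\la}$ and produces a surjection $\B_{\bs\la}\twoheadrightarrow\B_\lba$ whose kernel plainly contains $I^\B_\lba$. For the reverse inclusion, if $b\in\B_{\bs\la}$ annihilates $\M_\lba$, then $bv_0\in I^\M_\lba=I^\B_\lba v_0$ by (ii), so $bv_0=cv_0$ for some $c\in I^\B_\lba$, and the injectivity of $b\mapsto bv_0$ (a consequence of Theorem~\ref{first1}, which identifies $\M_{\bs\la}$ with the regular representation of $\B_{\bs\la}$) forces $b=c\in I^\B_\lba$. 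The main obstacle will be step (iv): reconciling the $\V^S$-side definition $I^\V_{\bs a}=I_{\bs a}\V^S$ (involving the full polynomial ring $\C[\bs z]$ rather than its symmetric subring) with the Bethe-algebra-generated ideal $I^\M_\lba$, which ultimately rests on Lemma~\ref{Uz}.
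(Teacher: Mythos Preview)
Your proof is correct and follows essentially the same route as the paper, which simply cites Theorems~\ref{first}, \ref{first1} and Lemmas~\ref{symm OK}, \ref{factor=weyl}; you have just spelled out what each of those ingredients contributes. One small remark on your ``main obstacle'': since $\V^S$ is only a $\C[z_1,\dots,z_n]^S$-module (not a $\C[\bs z]$-module), the notation $I_{\bs a}\V^S$ already means $\sum_s(\si_s-a_s)\V^S$, and the embedding $\C[\bs z]^S\hookrightarrow\B_{\bs\la}$ that you need is stated explicitly in the paragraph preceding Lemma~\ref{symm OK}, so you could cite that directly rather than reassembling it from Lemma~\ref{Uz} and formula~\Ref{B1}.
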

\begin{proof}
The lemma follows from Theorems~\ref{first}, \ref{first1} and
Lemmas~\ref{symm OK}, \ref{factor=weyl}.
\end{proof}

By Lemma~\ref{identify},
the maps $\tau_{\bs\la}$ and $\mu_{\bs\la}$ induce the maps
\beq
\label{taumu}
\tau_\lba:\O_\lba\to\B_\lba\,,\qquad
\mu_\lba:\O_\lba\to\M_\lba\,.
\eeq

\begin{thm}
\label{second}
The map $\tau_\lba$ is an isomorphism of algebras. The map $\mu_\lba$ is
an isomorphism of vector spaces. The maps\/ $\tau_\lba$ and\/ $\mu_\lba$
intertwine the action of multiplication operators on $\O_\lba$ and the action
of the Bethe algebra $\B_\lba$ on $\M_\lba$, that is, for any $F,G\in\O_\lba$,
we have
\vvn-.2>
\be
\mu_\lba(FG)\,=\,\tau_\lba(F)\,\mu_\lba(G)\,.
\vv.3>
\ee
In other words, the maps\/ $\tau_\lba$ and\/ $\mu_\lba$ give an isomorphism of
the regular representation of\/ $\O_\lba$ and the\/ $\B_\lba$-module $\M_\lba$.
\end{thm}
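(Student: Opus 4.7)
The plan is to deduce Theorem~\ref{second} directly from Theorems~\ref{first} and~\ref{first1} by descending to the quotients, with Lemma~\ref{identify} serving as the bridge that identifies the relevant ideals on the three sides.

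First I would establish the algebra isomorphism $\tau_\lba$. By Theorem~\ref{first}, $\tau_{\bs\la}:\O_{\bs\la}\to\B_{\bs\la}$ is an algebra isomorphism, and by Lemma~\ref{identify} it carries the ideal $I^\O_\lba$ exactly onto $I^\B_\lba$. Since Lemma~\ref{identify} also identifies $\B_\lba$ with $\B_{\bs\la}/I^\B_\lba$ and $\O_\lba=\O_{\bs\la}/I^\O_\lba$ by definition, the induced map $\tau_\lba$ is automatically a well-defined algebra isomorphism.

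Next I would repeat the argument for $\mu_\lba$. By Theorem~\ref{first1}, $\mu_{\bs\la}:\O_{\bs\la}\to\M_{\bs\la}$ is a linear isomorphism; by Lemma~\ref{identify} it sends $I^\O_\lba$ onto $I^\M_\lba$, and $\M_\lba=\M_{\bs\la}/I^\M_\lba$, so passage to the quotient yields a linear isomorphism $\mu_\lba:\O_\lba\to\M_\lba$. The intertwining identity comes for free: Theorem~\ref{first1} already gives $\mu_{\bs\la}(FG)=\tau_{\bs\la}(F)\mu_{\bs\la}(G)$ in $\M_{\bs\la}$, and projecting both sides to the quotient yields $\mu_\lba(\bar F\bar G)=\tau_\lba(\bar F)\mu_\lba(\bar G)$ in $\M_\lba$; this is precisely the statement that $\mu_\lba$ realizes $\M_\lba$ as the regular representation of $\O_\lba$ via $\tau_\lba$.

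There is no genuinely hard step in this argument, since the substantive content has been packaged into Lemma~\ref{identify}. Within that lemma the key point is the equality (not mere inclusion) $\tau_{\bs\la}(I^\O_\lba)=I^\B_\lba$, which rests on Lemma~\ref{symm OK} identifying the two natural copies of $\C[z_1,\dots,z_n]^S$ inside $\O_{\bs\la}$ and $\B_{\bs\la}$, together with Lemma~\ref{factor=weyl} identifying $\V^S/I^\V_{\bs a}$ with $\otimes_{s=1}^k W_{n_s}(b_s)$. The one thing I would be careful about in writing up the details is precisely this distinction between equality and inclusion of ideals, because isomorphism on quotients (as opposed to a mere surjection) requires equality on both the source and target sides — but both versions are already stated as equalities in Lemma~\ref{identify}, so the descent is painless.
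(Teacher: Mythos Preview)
Your proposal is correct and follows exactly the paper's approach: the paper's proof consists of the single sentence ``The theorem follows from Theorems~\ref{first}, \ref{first1} and Lemma~\ref{identify},'' which is precisely the descent-to-quotients argument you spell out.
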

\begin{proof}
The theorem follows from Theorems~\ref{first}, \ref{first1}
and Lemma~\ref{identify}.
\end{proof}

\begin{rem}
By Lemma~\ref{local Wr}, the algebra $\O_\lba$ is Frobenius.
Therefore, its regular and coregular representations are isomorphic.
\end{rem}

\subsection{Isomorphism of algebras $\O_\Llb$ and $\B_\Llb$}
Let $\bs\La=(\bs\la^{(1)},\dots,\bs\la^{(k)})$ be a sequence of partitions
with at most $N$ parts such that $|\bs\la^{(s)}|=n_s$ for all $s=1,\dots,k$.

Consider the $\B$-module $(\otimes_{s=1}^kL_{\bs\la^{(s)}}(b_s))_{\bs\la}$. 
Denote $(\otimes_{s=1}^kL_{\bs\la^{(s)}}(b_s))_{\bs\la}$  by
$\Mlb$ and  the Bethe
algebra associated with $(\otimes_{s=1}^kL_{\bs\la^{(s)}}(b_s))_{\bs\la}$  
 by  $\Blb$.

\medskip
We begin with an observation. Let $A$ be an associative unital algebra,
and let $L,M$ be $A$-modules such that $L$ is isomorphic to a subquotient of
$M$. Denote by $A_L$ and $A_M$ the images of $A$ in $\End(L)$ and $\End(M)$,
respectively, and by $\pi_L:A\to A_L$, $\pi_M:A\to A_M$ the corresponding
epimorphisms. Then, there exists a unique epimorphism $\pi_{ML}:A_M\to A_L$
such that $\pi_L=\pi_{ML}\circ\pi_M$.

Applying this observation to the Bethe algebra $\B$ and $\B$-modules
$\M_{\bs\la}\,$, $\M_\lba\,$, $\M_\Llb\,$, we get a chain of epimorphisms
$\B\to\B_{\bs\la}\to\B_\lba\to\B_\Llb\,$. In particular, each module over
a smaller Bethe algebra is naturally a module over a bigger Bethe algebra.

\medskip

For any element $F\in\B_{\bs\la}$, we denote by $\bat F$ the projection of
$F$ to the algebra $\B_\lba$.

Let $C_1(u),\dots,C_N(u)$ be the polynomials with coefficients in
$\B_{\bs\la}\,$, defined in Lemma~\ref{capelli}. Introduce the elements
$C_{ijs}\in\B_{\bs\la}$ for $i=1,\dots,N$, $j=0,\dots,n$, $s=1,\dots,k$,
by the rule
\be
\sum_{j=0}^{n}\,C_{ijs}\,(u-b_s)^j\,=\,C_i(u)\,.
\ee
In addition, let $\>\bat C_{0js}$\>, \,$j=0,\dots,n$, $s=1,\dots,k$,
be the numbers such that
\be
\sum_{j=0}^n\,\bat C_{0js}\,(u-b_s)^j\,=\,\prod_{r=1}^k\,(u-b_r)^{n_r}\,.
\ee
Define the {\it indicial polynomial\/ $\bat\chi_s^\B(\al)$ at\/ $b_s$}
by the formula
\be
\bat\chi_s^\B(\al)\,=\,
\sum_{i=0}^{N}\,\bat C_{i,n_s-i,s}\prod_{j=0}^{N-i-1} (\al-j)\,.
\ee
It is a polynomial of degree $N$ in the variable $\al$ with coefficients in
$\B_\lba$.

Let $I^\B_\Llb$ be the ideal in $\B_\lba\>$ generated by the elements
$\>\bat C_{ijs}$, $\,i=1,\dots,N$, $\,s=1,\dots,k$, $\,0\le j<n_s-i$,
and the coefficients of the polynomials
\beq
\label{chiB}
\bat\chi_s^\B(\al)\,-\,\prod_{\fratop{r=1}{r\ne s}}^k\,(b_s-b_r)^{n_r}\,
\prod_{l=1}^N\,(\al-\la_l^{(s)}-N+l)\,,\qquad s=1,\dots,k\,.\kern-3em
\eeq

\begin{lem}
\label{zero}
The ideal\/ $I^\B_\Llb$ belongs to the kernel of the projection\/
$\>\B_\lba\to\B_\Llb\,$.
\end{lem}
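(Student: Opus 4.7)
The plan is to verify that every generator of the ideal $I^\B_\Llb$ annihilates $\M_\Llb=(\otimes_{s=1}^k L_{\bs\la^{(s)}}(b_s))_{\bs\la}$, which forces each such generator into the kernel of the projection $\B_\lba\to\B_\Llb$. The essential tool is Lemma \ref{Apol}, which gives the Laurent structure of $B_i(u)$ at each puncture $b_s$ on $\M_\Llb$ together with the indicial polynomial identity for the universal differential operator $\D^\B$ at $b_s$.

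First I would carry out a local Laurent expansion at $b_s$. By Lemma \ref{Apol}, on $\M_\Llb$ the series $B_i(u)$ has a pole of order at most $i$ at $b_s$; denote by $C_i^{(s)}$ its leading Laurent coefficient, so that $B_i(u)=C_i^{(s)}(u-b_s)^{-i}+(\text{less singular})$ near $b_s$, with $C_0^{(s)}=1$. The content of Lemma \ref{Apol}, read as the indicial polynomial of $\D^\B$ at $b_s$, is then
\[
\sum_{i=0}^N C_i^{(s)}\prod_{j=0}^{N-i-1}(\al-j)\,=\,\prod_{l=1}^N(\al-\la_l^{(s)}-N+l)
\]
as a scalar identity on $\M_\Llb$. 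Expanding $\prod_r(u-b_r)^{n_r}=(u-b_s)^{n_s}\bigl[\prod_{r\ne s}(b_s-b_r)^{n_r}+O(u-b_s)\bigr]$ near $b_s$, the image of $\bat C_i(u)=B_i(u)\prod_r(u-b_r)^{n_r}$ in $\B_\Llb[u]$ has the local Taylor expansion
\[
C_i^{(s)}\prod_{r\ne s}(b_s-b_r)^{n_r}\,(u-b_s)^{n_s-i}\,+\,(\text{higher order in } u-b_s).
\]

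Both families of generators then drop out of this single expansion. For $0\le j<n_s-i$ the coefficient of $(u-b_s)^j$ in the expansion vanishes, so the elements $\bat C_{ijs}$ project to zero in $\B_\Llb$, which handles the first family. The coefficient of $(u-b_s)^{n_s-i}$ equals $C_i^{(s)}\prod_{r\ne s}(b_s-b_r)^{n_r}$; substituting this into the definition of $\bat\chi_s^\B(\al)$ and pulling the common factor $\prod_{r\ne s}(b_s-b_r)^{n_r}$ outside the sum, the indicial identity above yields $\bat\chi_s^\B(\al)\equiv\prod_{r\ne s}(b_s-b_r)^{n_r}\prod_{l=1}^N(\al-\la_l^{(s)}-N+l)$ as operators on $\M_\Llb$. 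Thus the coefficients of the polynomials in the second family also project to zero, and the lemma follows.

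No substantial obstacle is foreseen: the whole argument is a direct local Taylor computation at each $b_s$. The only delicate point is the book-keeping of the factor $\prod_{r\ne s}(b_s-b_r)^{n_r}$ that emerges from evaluating $\prod_{r\ne s}(u-b_r)^{n_r}$ at $u=b_s$, and matching this against the indicial polynomial identity so that the specific normalization appearing in the statement of the lemma is reproduced exactly.
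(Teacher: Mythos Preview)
Your proposal is correct and follows essentially the same route as the paper, which proves the lemma in one line by citing Lemma~\ref{Apol} and Corollary~\ref{Apol2}. You have simply unpacked that citation: the pole-order bound on $B_i(u)$ at each $b_s$ coming from Lemma~\ref{Apol} immediately kills the generators $\bat C_{ijs}$ with $j<n_s-i$, while the indicial-polynomial identity in Lemma~\ref{Apol} (read as the statement that $\D^\B$ has exponents $\la^{(s)}_l+N-l$ at $b_s$ on $\M_\Llb$) forces the coefficients of the second family of generators to vanish after the factor $\prod_{r\ne s}(b_s-b_r)^{n_r}$ is extracted. Your local argument does not even need Corollary~\ref{Apol2} explicitly, since the relevant generators are vacuous when $n_s\le i$.
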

\begin{proof}
The statement follows from Lemma~\ref{Apol} and Corollary~\ref{Apol2}.
\end{proof}

Hence, the projection $\B_\lba\to\B_\Llb$ descends to an epimorphism
\beq
\label{pi}
\pi_\Llb:\B_\lba/I^\B_\Llb\,\to\,\B_\Llb\,,
\eeq
which makes $\M_\Llb$ into a $\>\B_\lba/I^\B_\Llb$-module.

\sskip
Denote $\,\ker\>(I^\B_\Llb)=\{\>v\in\M_\lba\ |\ I^\B_\Llb\>v=0\>\}\,$.
Clearly, $\,\ker\>(I^\B_\Llb)$ is a $\>\B_\lba$-submodule of $\M_\lba$.

\begin{prop}
\label{kerI}
The $\>\B_\lba/I^\B_\Llb$-modules $\,\ker\>(I^\B_\Llb)$
and\/ $\M_\Llb$ are isomorphic.
\end{prop}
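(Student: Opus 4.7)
The plan is to push the problem through the isomorphisms of Theorem~\ref{second} onto the commutative-algebra side, exploit the Frobenius structure of $\O_\Llb$, and then match $\M_\Llb$ against the regular representation of $\B_\lba/I^\B_\Llb$ by a cyclicity argument.

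I first verify that under $\tau_\lba\colon\O_\lba\to\B_\lba$ the ideal $\bat I^\O_\Llb$ is mapped onto $I^\B_\Llb$. This is essentially tautological: the generators of both ideals are extracted by the same recipe from the $(u-b_s)$-expansions of $G_i(u)$ and $C_i(u)$ and from the indicial polynomials $\bat\chi_s^\O$ and $\bat\chi_s^\B$, while $\tau_\lba$ intertwines these series by construction. Together with Proposition~\ref{scheme}, this yields an algebra isomorphism $\O_\Llb\cong\B_\lba/I^\B_\Llb$. The module intertwining part of Theorem~\ref{second} then shows that $\mu_\lba$ carries $\Ann(\bat I^\O_\Llb)\subset\O_\lba$ onto $\ker(I^\B_\Llb)\subset\M_\lba$. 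By Corollary~\ref{AnnI}, $\Ann(\bat I^\O_\Llb)$ is the coregular representation of $\O_\Llb$, so $\ker(I^\B_\Llb)$ is the coregular representation of $\B_\lba/I^\B_\Llb$. Since this algebra is Frobenius by Lemma~\ref{frobenius}, its coregular and regular representations are isomorphic; it therefore suffices to exhibit an isomorphism $\M_\Llb\cong\B_\lba/I^\B_\Llb$ of $\B_\lba/I^\B_\Llb$-modules. The dimensions already match by equation~\Ref{dimO}: $\dim\M_\Llb=\dim\O_\Llb=\dim(\B_\lba/I^\B_\Llb)$.

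The main obstacle is promoting this equality of dimensions to an isomorphism of modules, equivalently showing that $\M_\Llb$ is cyclic as a $\B_\lba/I^\B_\Llb$-module. When each $\bs\la^{(s)}=(n_s,0,\dots,0)$, Lemma~\ref{weylb}(iv) supplies a $\glnt$-epimorphism $\otimes_sW_{n_s}(b_s)\twoheadrightarrow\otimes_sL_{\bs\la^{(s)}}(b_s)$, whose weight-$\bs\la$ component is a $\B_\lba$-equivariant surjection $\M_\lba\twoheadrightarrow\M_\Llb$ with kernel $I^\B_\Llb\cdot\M_\lba$ (forced by a dimension comparison with $\O_\lba/\bat I^\O_\Llb$ under $\mu_\lba$). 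For general $\bs\La$, I would instead try to produce a cyclic vector for the $\B_\Llb$-action on $\M_\Llb$, either via a Bethe-ansatz construction analogous to Lemma~\ref{generic} or by identifying $\M_\Llb$ as the orthogonal complement of a suitable $\B_\lba$-submodule of $\M_\lba$ under the Frobenius form transferred from $\O_\lba$. Once a cyclic vector $v\in\M_\Llb$ is in hand, the map $\B_\lba/I^\B_\Llb\to\M_\Llb$ sending $1\mapsto v$ is a surjection between spaces of equal dimension and hence an isomorphism, which combined with the identifications above yields $\ker(I^\B_\Llb)\cong\M_\Llb$.
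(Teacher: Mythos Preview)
Your reduction is correct up to a point: $\mu_\lba$ indeed carries $\Ann(\bat I^\O_\Llb)$ onto $\ker(I^\B_\Llb)$, and Corollary~\ref{AnnI} plus the Frobenius property identify $\ker(I^\B_\Llb)$ with the regular representation of $\B_\lba/I^\B_\Llb$. The genuine gap is the remaining step, namely showing that $\M_\Llb$ is cyclic over $\B_\lba$. You acknowledge this yourself: for general $\bs\La$ you only offer two sketches (a Bethe-ansatz construction ``analogous to Lemma~\ref{generic}'' or an unspecified orthogonal-complement argument), neither of which is carried out. Lemma~\ref{generic} applies to $\otimes_s V(b_s)$ at generic $b_s$, not to $\otimes_s L_{\bs\la^{(s)}}(b_s)$ at the fixed $\bs b$, and $\M_\Llb$ is a priori only a subquotient of $\M_\lba$, so there is no evident Frobenius pairing on it to invoke. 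Without this step your dimension equality $\dim\M_\Llb=\dim(\B_\lba/I^\B_\Llb)$ does not by itself give a module isomorphism.

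The paper closes exactly this gap, but not by proving cyclicity abstractly. It realizes $\M_\Llb$ as the quotient $\M_\Llb^{\bs w}/\M_\Llb^{\bs w,>}$ of $\B_\lba$-submodules of $\M_\lba$, where $\M_\Llb^{\bs w}$ comes from the $\glnt$-submodule generated by a product of highest-degree singular vectors $w_s\in(W_{n_s})^{sing}_{\bs\la^{(s)}}$. It then constructs a specific element $E\in I^\B_\Llb$, a linear combination of the indicial-polynomial generators~\Ref{chiB}, chosen so that on the associated graded module $\gr\M_\lba$ it acts by zero on the $\bs\La$-component and by nonzero scalars on every $\bs\Mu$-component with $\bs\Mu\ne\bs\La$. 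Hence $E$ restricted to $\M_\Llb^{\bs w,>}$ is invertible, while $E\,\M_\Llb^{\bs w}\subset\M_\Llb^{\bs w,>}$; an elementary linear-algebra lemma (Lemma~\ref{EU}) then splits $\M_\Llb^{\bs w}=\M_\Llb^{\bs w,>}\oplus(\ker E\cap\M_\Llb^{\bs w})$, giving a $\B_\lba$-submodule of $\M_\lba$ isomorphic to $\M_\Llb$ and contained in $\ker(I^\B_\Llb)$. The dimension count via~\Ref{dimO} then forces equality. In short, the missing idea in your argument is this concrete lift of $\M_\Llb$ into $\M_\lba$ via an element of $I^\B_\Llb$ that separates $\bs\La$ from all other $\bs\Mu$ on the associated graded.
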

\noindent
The proposition is proved in Section~\ref{main proof}.

\medskip
Let $\bat I^\O_\Llb\subset\O_\lba$ be the ideal defined in Section~\ref{barI}.
\vvn.1>
Clearly, the map $\tau_\lba:\O_\lba\to\B_\lba$ sends $\bat I^\O_\Llb$ to
$I^\B_\Llb$. By Lemma~\ref{scheme}, the maps $\tau_\lba$ and $\pi_\Llb$
induce the homomorphism
\vvn.1>
\be
\tau_\Llb:\O_\Llb\,\to\,\B_\Llb\,.
\vv.1>
\ee

By Theorem~\ref{second}, the map $\mu_\lba:\O_\lba\to\M_\lba$ sends
$\Ann(\bat I^\O_\Llb)\subset\O_\lba$ to $\ker\>(I^\B_\Llb)$\>.
The vector spaces $\Ann(\bat I^\O_\Llb)$ and $(\O_\Llb)^*$ are isomorphic
by Corollary~\ref{AnnI}. Hence, Proposition~\ref{kerI} yields that the map
$\mu_\lba$ induces a bijective linear map
\be
\mu_\Llb:(\O_\Llb)^*\to\,\M_\Llb\,.
\vv.1>
\ee

\sskip
For any $F\in\O_\Llb$, denote by $F^*\in\End\bigl((\O_\Llb)^*\bigr)$
the operator, dual to the operator of multiplication by $F$ on $\O_\Llb$.

\begin{thm}
\label{third}
The map $\tau_\Llb$ is an isomorphism of algebras. The maps\/ $\tau_\Llb$ and\/
$\mu_\Llb$ intertwine the action of the operators on $(\O_\Llb)^*$, dual to
the multiplication operators on $\O_\Llb$\>, and the action of the Bethe
algebra $\B_\Llb$ on $\M_\Llb$, that is, for any $F\in\O_\lba$ and
$G\in(\O_\Llb)$\>, we have
\vvn-.2>
\be
\mu_\Llb(F^*G)\,=\,\tau_\Llb(F)\,\mu_\Llb(G)\,.
\vv.3>
\ee
In other words, the maps\/ $\tau_\Llb$ and\/ $\mu_\Llb$ give an isomorphism of
the coregular representation of\/ $\O_\Llb$ on the dual space $(\O_\Llb)^*$
and the $\B_\Llb$-module $\M_\Llb$.
\end{thm}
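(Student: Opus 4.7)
The plan is to realize $\tau_\Llb$ as the composition of an algebra isomorphism with the epimorphism $\pi_\Llb$ of~\Ref{pi}, and then to upgrade $\pi_\Llb$ to an isomorphism by invoking faithfulness of the coregular representation of the Frobenius algebra $\O_\Llb$. By Theorem~\ref{second}, $\tau_\lba$ is an algebra isomorphism, and it sends $\bat I^\O_\Llb$ onto $I^\B_\Llb$ as noted immediately before the theorem. Hence it will descend to an algebra isomorphism $\O_\lba/\bat I^\O_\Llb\to\B_\lba/I^\B_\Llb$, which, via Proposition~\ref{scheme}, becomes an algebra isomorphism $\bar\tau\colon\O_\Llb\to\B_\lba/I^\B_\Llb$. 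Since by construction $\tau_\Llb=\pi_\Llb\circ\bar\tau$, it will suffice to prove that $\pi_\Llb$ is injective.

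For this, I would use Theorem~\ref{second} together with Lemma~\ref{identify} to observe that $\mu_\lba$ restricts to an isomorphism $\Ann(\bat I^\O_\Llb)\to\ker(I^\B_\Llb)$ of modules over $\O_\lba/\bat I^\O_\Llb\cong\B_\lba/I^\B_\Llb$. By Corollary~\ref{AnnI}, $\Ann(\bat I^\O_\Llb)$ is the coregular representation of $\O_\Llb$; since $\O_\Llb$ is Frobenius by Lemma~\ref{frobenius}, this coregular representation is isomorphic to its regular one and in particular is faithful. Proposition~\ref{kerI} then identifies $\ker(I^\B_\Llb)$ with $\M_\Llb$ as $\B_\lba/I^\B_\Llb$-modules, and the resulting $\B_\lba/I^\B_\Llb$-action on $\M_\Llb$ is, by construction of $\pi_\Llb$, the pullback of the $\B_\Llb$-action. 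Faithfulness of the action therefore forces $\pi_\Llb$ to be injective, hence bijective, establishing that $\tau_\Llb$ is an algebra isomorphism.

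For the intertwining identity $\mu_\Llb(F^*G)=\tau_\Llb(F)\,\mu_\Llb(G)$, I would deduce it from the corresponding identity $\mu_\lba(FG)=\tau_\lba(F)\,\mu_\lba(G)$ of Theorem~\ref{second}. Given $F\in\O_\Llb$ represented by some $\tilde F\in\O_\lba$ and $G\in(\O_\Llb)^*$ represented by $\tilde G\in\Ann(\bat I^\O_\Llb)$, multiplication by $\tilde F$ on $\Ann(\bat I^\O_\Llb)$ corresponds, under the identification of Corollary~\ref{AnnI}, to the coregular action $F^*$ on $(\O_\Llb)^*$. Applying $\mu_\lba$ and using that $\mu_\Llb$ agrees with the restriction of $\mu_\lba$ to $\Ann(\bat I^\O_\Llb)$, together with $\tau_\Llb=\pi_\Llb\circ\bar\tau$, will then yield the claim.

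The hard part of this strategy is Proposition~\ref{kerI}, whose proof is deferred; it is the nontrivial bridge that identifies the auxiliary module $\ker(I^\B_\Llb)\subset\M_\lba$ with the target module $\M_\Llb$, and without it the faithfulness argument above collapses. Once Proposition~\ref{kerI} is in hand, everything else reduces to formal manipulations with Frobenius algebras, quotients, annihilators, and the previously established Theorems~\ref{first}--\ref{second}.
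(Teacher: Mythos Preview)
Your proposal is correct and follows essentially the same route as the paper's proof: descend $\tau_\lba$ to an isomorphism $\O_\Llb\to\B_\lba/I^\B_\Llb$ via Proposition~\ref{scheme}, transport faithfulness of the coregular $\O_\Llb$-module $\Ann(\bat I^\O_\Llb)$ (Corollary~\ref{AnnI}) to $\ker(I^\B_\Llb)$ via $\mu_\lba$, and then use Proposition~\ref{kerI} to conclude that $\pi_\Llb$ is injective. The only cosmetic difference is that you justify faithfulness of the coregular representation by invoking the Frobenius property (Lemma~\ref{frobenius}) and the isomorphism with the regular representation, whereas the paper simply cites that the coregular representation of any finite-dimensional commutative algebra is faithful (Section~\ref{comalg}, Lemma~\ref{coreg}); either works.
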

\begin{proof}
By Lemma~\ref{scheme}, the isomorphism $\tau_\lba:\O_\lba\to\B_\lba$ induces
the isomorphism
\be
\tau_\Llb:\O_\Llb\,\to\,\B_\lba/I^\B_\Llb\,.
\ee
so the maps $\tau_\Llb$ and $\mu_\lba$ give an isomorphism of
the $\O_\Llb$-module $\Ann(\bat I^\O_\Llb)$ and the $\B_\lba/I^\B_\Llb$-module
$\ker\>(I^\B_\Llb)$, see Theorem~\ref{second}.

By Lemma~\ref{coreg}, the $\O_\Llb$-module $\Ann(\bat I^\O_\Llb)$ is
isomorphic to the coregular representation of $\O_\Llb$ on the dual space
$(\O_\Llb)^*$. In particular, it is faithful.
Therefore, the $\B_\lba/I^\B_\Llb$-module $\ker\>(I^\B_\Llb)$ is faithful.
By Proposition~\ref{kerI}, the $\>\B_\lba/I^\B_\Llb$-module
$M_{\bs\la,\bs\la,\bs b}\,$, isomorphic to $\ker\>(I^\B_\Llb)$, is faithful
too, which implies that the map $\pi_\Llb:\B_\lba/I^\B_\Llb\to\B_\Llb$
is an isomorphism of algebras. The theorem follows.
\end{proof}

\begin{rem}
By Lemma~\ref{frobenius}, the algebra $\O_\Llb$ is Frobenius.
Therefore, its coregular and regular representations are isomorphic.
\end{rem}

\subsection{Proof of Proposition \ref{kerI}}
\label{main proof}
We begin the proof with an elementary auxiliary lemma. Let $M$ be
a finite-dimensional vector space, $U\subset M$ a subspace, and $E\in\End(M)$.
\begin{lem}
\label{EU}
Let\/ $EM\subset U$, and the restriction of\/ $E$ to\/ $U$ is invertible in\/
$\End(U)$. Then\/ $EU=U$ and $\,M=U\oplus\ker E$.
\qed
\end{lem}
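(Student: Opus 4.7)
The statement is essentially a short linear algebra exercise, and my plan is direct, splitting into the two assertions.

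First, I would observe that $EU = U$ follows immediately from the hypothesis: the restriction $E|_U$ lies in $\End(U)$ (so $EU \subset U$) and is invertible there, hence in particular surjective onto $U$. So $EU = U$.

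Next, to prove $M = U \oplus \ker E$, I would verify the two standard conditions. For the trivial intersection $U \cap \ker E = 0$: if $u \in U$ satisfies $Eu = 0$, then $E|_U(u) = 0$, and since $E|_U$ is invertible (hence injective on $U$), we get $u = 0$. For the sum $M = U + \ker E$: given $m \in M$, the hypothesis $EM \subset U$ gives $Em \in U$. By the surjectivity of $E|_U$ just established, there exists $u \in U$ with $Eu = Em$. Then $m - u \in \ker E$, and we write $m = u + (m - u) \in U + \ker E$.

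There is no real obstacle here; the only subtlety is that both halves of the invertibility hypothesis get used — surjectivity to produce $EU = U$ and to lift $Em$ to an element of $U$, injectivity to ensure $U \cap \ker E = 0$. Finite-dimensionality of $M$ is not even needed for the decomposition argument as written, though it is implicit in the statement.
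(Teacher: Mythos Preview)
Your proof is correct. The paper gives no proof at all for this lemma --- it is stated with an immediate \qed{} as an elementary fact --- so your direct argument is exactly the kind of verification one would supply, and there is nothing to compare against.
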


Let $W_m$ be a Weyl module, see Section~\ref{secweyl}, and $\bs\mu$ a partition
with at most $N$ parts such that $|\bs\mu|=m$. Recall that $W_m$ is a graded
vector space, the grading of $W_m$ is defined in Lemma~\ref{weyl}.

Given a homogeneous vector $w\in(W_m)_{\bs\mu}^{sing}$, let $\L_w(b)$ be the
$\glnt$-submodule of $W_m(b)$ generated by the vector $v$. The space $\L_w(b)$
is graded. Denote by $\L_w^=(b)$ and $\L_w^>(b)$ the subspaces of $\L_w(b)$
spanned by homogeneous vectors of degree $\>\deg w$ and of degree strictly
greater than $\>\deg w$, respectively. The subspace $\L_w^=(b)$ is
a $\gln$-submodule of $\L_w(b)$ isomorphic to the irreducible $\gln$-module
$L_{\bs\mu}$. The subspace $\L_w^>(b)$ is a $\glnt$-submodule of $\L_w(b)$,
and the $\glnt$-module $\L_w(b)/\L_w^>(b)$ is isomorphic to the evaluation
module $L_{\bs\mu}(b)$. If $v$ has the largest degree possible for vectors
in $(W_m)_{\bs\mu}^{sing}$, then $\L_w^>(b)$, considered as a $\gln$-module,
does not contain $L_{\bs\mu}$.

For any $s=1,\dots,k$, pick up a homogeneous vector
$w_s\in(W_{n_s})_{\bs\la^{(s)}}^{sing}$ of the largest possible degree.
Let $\L_{\bs w}(\bs b)$ be the $\glnt$-submodule of
$\otimes_{s=1}^k W_{n_s}(b_s)$ generated by the vector $\otimes_{s=1}^k w_s$.

Denote by $\L_{\bs w}^=(\bs b)$ and $\L_{\bs w}^>(\bs b)$
the following subspaces of $\L_{\bs w}(\bs b)$:
\begin{gather*}
\L_{\bs w}^=(\bs b)\,=\,\otimes_{s=1}^k\L_{w_s}^=(b_s)\,,
\\
\L_{\bs w}^>(\bs b)\,=\,\sum_{s=1}^k\,\L_{w_1}(b_1)\otimes\dots\otimes
\L_{w_s}^>(b_s)\otimes\dots\otimes\L_{w_k}(b_k)\,.
\end{gather*}
The subspace $\L_{\bs w}^>(\bs b)$ is a $\glnt$-submodule
of $\L_{\bs w}(\bs b)$, and the $\glnt$-module
$\L_{\bs w}(\bs b)/\L_{\bs w}^>(\bs b)$ is isomorphic to the tensor
product of evaluation modules $\otimes_{s=1}^kL_{\bs\la^{(s)}}(b_s)$.

The space $\otimes_{s=1}^k W_{n_s}$ has the second $\glnt$-module structure,
denoted $\gr\bigl(\otimes_{s=1}^k W_{n_s}(b_s)\bigr)$, which was introduced
at the end of Section~\ref{secweyl}. The subspace $\L_{\bs w}(\bs b)$ is
a $\glnt$-submodule of $\gr\bigl(\otimes_{s=1}^k W_{n_s}(b_s)\bigr)$,
isomorphic to a direct sum of irreducible $\glnt$-modules of the form
$\otimes_{s=1}^k L_{\bs\mu^{(s)}}(b_s)$, where $|\bs\mu^{(s)}|=n_s$,
$\,s=1,\dots,k$, see Lemmas~\ref{grWmb} and~\ref{grtensor}, and
$(\bs\mu^{(1)},\dots,\bs\mu^{(k)})\ne(\bs\la^{(1)},\dots,\bs\la^{(k)})$
for any term of the sum.

The subspace $\M_\lba=(\otimes_{s=1}^k W_{n_s}(b_s))_{\bs\la}$
is invariant under the action of the Bethe algebra ${\B\subset\Uglnt}$.
This makes it  a $\B$-module, which we call the standard $\B$-module
structure on $\M_\lba$. The $\B$-module $\M_\lba$ contains
the submodules
$\M_\Llb^{\bs w}=(\L_{\bs w}(\bs b))_{\bs\la}$
and $\M_\Llb^{\bs w,>}=(\L_{\bs w}^>(\bs b))_{\bs\la}$,
and the subspace
$\M_\Llb^{\bs w,=}=(\L_{\bs w}^=(\bs b))_{\bs\la}$.
As vector spaces,
$\M_\Llb^{\bs w}=\M_\Llb^{\bs w,=}
\oplus\M_\Llb^{\bs w,>}\,$. The $\B$-modules
$\M_\Llb^{\bs w}/\M_\Llb^{\bs w,>}$
and $\M_\Llb$ are isomorphic.

The space $\M_\lba$ has another $\B$-module structure,
inherited from the $\glnt$-module structure of
$\gr\bigl(\otimes_{s=1}^k W_{n_s}(b_s)\bigr)$.
We denote the new structure $\gr\M_\lba\,$.
The subspaces $\M_\Llb^{\bs w}\,$,
$\M_\Llb^{\bs w,=}\,$, $\M_\Llb^{\bs w,>}$
are $\B$-submodules of the $\B$-module $\gr\M_\lba\,$. The submodule
$\M_\Llb^{\bs w,=}\subset\gr\M_\lba$ is isomorphic
to the $\B$-module $\M_\Llb\,$, and the submodule
$\M_\Llb^{\bs w,>}\subset\gr\M_\lba$ is isomorphic
to a direct sum of $\B$-modules of the form $\M_{\bs\Mu,\bs\la,\bs b}\,$, where
$\bs\Mu=(\bs\mu^{(1)},\dots,\bs\mu^{(k)})$, \,$|\bs\mu^{(s)}|=n_s$,
$\,s=1,\dots,k$, and $\bs\Mu\ne\bs\La$ for any term of the sum.

\sskip
In the picture described above, we can regard all $\B$-modules involved
as $\B_\lba$-modules.

\sskip
For any $F\in\B_\lba$, we denote by $\gr F\in\End(\M_\lba)$ the linear operator
corresponding to the action of $F$ on $\gr\M_\lba$\>. The map $F\mapsto\gr F$
is an algebra homomorphism.

Let complex numbers $c_1,\dots,c_k$, $\al_1,\dots,\al_k$ be such that
\be
\sum_{s=1}^k\,c_s\>\Bigl(\,\prod_{i=1}^N\,(\al_s-\mu_i^{(s)}-N+i)-
\prod_{i=1}^N\,(\al_s-\la_i^{(s)}-N+i)\Bigr)\,\ne\,0\,,
\ee
for any sequence of partitions
$(\bs\mu^{(1)},\dots,\bs\mu^{(k)})\ne(\bs\la^{(1)},\dots,\bs\la^{(k)})\,$. Introduce
\vvn.1>
\be
E\,=\,\sum_{s=1}^k c_s\>\Bigl(\bat\chi_s^\B(\al_s)-
\prod_{\fratop{r=1}{r\ne s}}^k\,(b_s-b_r)^{n_r}\,
\prod_{i=1}^N\,(\al_s-\la_i^{(s)}-N+i)\Bigr)\,\in I^\B_\Llb\,,
\vv-.2>
\ee
where $\bat\chi_s^\B$ is the indicial polynomial~\Ref{chiB}.
\vvn.1>
With respect to the standard $\B$-module structure on $\M_\lba$,
we have $E\>\M_\Llb^{\bs w}\subset\M_\Llb^{\bs w,>}$.

\begin{lem}
The restriction of\/ $E$ to $\M_\Llb^{\bs w,>}$ is invertible in
$\End(\M_\Llb^{\bs w,>})$.
\end{lem}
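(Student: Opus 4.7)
The plan is to establish invertibility by first computing, via the alternate graded $\B$-module structure $\gr\M_\lba$, the scalar by which $E$ acts on each irreducible summand of $\M_\Llb^{\bs w,>}$, and then passing back to the standard structure by a filtered-to-graded argument.

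First I work in the $\gr$ structure. The subspace $\M_\Llb^{\bs w,>}$ is a $\B$-submodule of $\gr\M_\lba$ that decomposes, as recalled just before the lemma, as $\bigoplus_{\bs\Mu\ne\bs\La}\M_{\bs\Mu,\bs\la,\bs b}$, each summand being the weight-$\bs\la$ subspace of the evaluation tensor product $\otimes_{s=1}^k L_{\bs\mu^{(s)}}(b_s)$. Applying Lemma~\ref{Apol} to this tensor product (with $\bs\la^{(s)}$ replaced by $\bs\mu^{(s)}$) and unwinding the relation $\bat C_i(u)=A_i(u)\prod_{s=1}^k(u-b_s)^{n_s-i}$ at $u=b_s$, the indicial polynomial $\bat\chi_s^\B(\al_s)$ acts on $\M_{\bs\Mu,\bs\la,\bs b}$ as the scalar $\prod_{r\ne s}(b_s-b_r)^{n_r}\prod_l(\al_s-\mu_l^{(s)}-N+l)$. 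Substituting into the definition of $E$, the operator $E$ acts on $\M_{\bs\Mu,\bs\la,\bs b}$ as the scalar
\be
E_{\bs\Mu}=\sum_{s=1}^k c_s\prod_{\satop{r=1}{r\ne s}}^k(b_s-b_r)^{n_r}\Bigl(\prod_{l=1}^N(\al_s-\mu_l^{(s)}-N+l)-\prod_{l=1}^N(\al_s-\la_l^{(s)}-N+l)\Bigr).
\ee
Because the factors $\prod_{r\ne s}(b_s-b_r)^{n_r}$ are nonzero and independent of $\bs\Mu$, absorbing them into $c_s$ shows that the hypothesis on $(c_s,\al_s)$ made just before the lemma forces $E_{\bs\Mu}\ne 0$ for every $\bs\Mu\ne\bs\La$ that appears in the decomposition.

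Second I pass to the standard structure. The $\Z_{\geq 0}^k$-filtration on $\otimes_{s=1}^k W_{n_s}(b_s)$ whose associated graded is $\gr(\otimes_{s=1}^k W_{n_s}(b_s))$ restricts to a filtration on $\M_\Llb^{\bs w,>}$ whose associated graded $\B$-module is precisely $\bigoplus_{\bs\Mu\ne\bs\La}\M_{\bs\Mu,\bs\la,\bs b}$. The element $E\in\B_\lba$ preserves this filtration, and its induced action on the associated graded is the direct sum $\bigoplus_{\bs\Mu\ne\bs\La}E_{\bs\Mu}\cdot\mathrm{Id}$, which is invertible by the previous paragraph. Since $\M_\Llb^{\bs w,>}$ is finite-dimensional, invertibility of the associated graded operator implies invertibility of $E$ itself: in a basis adapted to the filtration, the matrix of $E$ is block upper-triangular with invertible diagonal blocks.

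The main obstacle is the bookkeeping in the first step, namely carefully identifying the scalar action of $\bat\chi_s^\B(\al)$ on each graded summand by correctly managing the rescaling factor relating $\bat C_{i,n_s-i,s}$ to the scalars $A_i(b_s)$ of Lemma~\ref{Apol}. Once that identification is in hand, the passage from graded to filtered invertibility is a routine application of finite-dimensional linear algebra.
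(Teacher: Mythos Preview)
Your proof is correct and follows the same two-step strategy as the paper: first show that in the $\gr$ structure $E$ acts invertibly on each summand $\M_{\bs\Mu,\bs\la,\bs b}$ with $\bs\Mu\ne\bs\La$ (the paper invokes Lemma~\ref{zero} for this, while you compute the scalar directly via Lemma~\ref{Apol}), and then deduce invertibility of $E$ on $\M_\Llb^{\bs w,>}$ from invertibility of $\gr E$ by block-triangular finite-dimensional linear algebra. Your write-up simply makes explicit the scalar computation and the filtered-to-graded passage that the paper compresses into a single sentence; the ``absorbing'' of the nonzero factors $\prod_{r\ne s}(b_s-b_r)^{n_r}$ into $c_s$ is harmless because the hypothesis on $(c_s,\al_s)$ is a genericity condition that can equivalently be imposed with or without those fixed nonzero weights.
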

\begin{proof}
Lemma~\ref{zero} implies that the projection of $E$ to $\B_\Llb$ equals zero,
and the projection of $E$ to $\B_{\bs\Mu,\bs\la,\bs b}$ with $\bs\Mu\ne\bs\La$
is invertible. This means that the restriction of the operator $\gr E$
to $\M_\Llb^{\bs w,>}$ is invertible in $\End(\M_\Llb^{\bs w,>})$.
Therefore, the restriction of\/ $E$ to $\M_\Llb^{\bs w,>}$ is invertible
in $\End(\M_\Llb^{\bs w,>})$.
\end{proof}

Denote $\ker_\Llb^{\bs w}E\>=\>\ker E\,\cap\,\M_\Llb^{\bs w}\,$.
By Lemma~\ref{EU}, the canonical projection
\be
\M_\Llb^{\bs w}\to
\M_\Llb^{\bs w}/\M_\Llb^{\bs w,>}\simeq
\M_\Llb
\ee
induces an isomorphism $\ker_\Llb^{\bs w}E\to\M_\Llb$ of vector spaces.
\vvn.1>
Since the algebra $\B_\lba$ is commutative, the subspace $\ker_\Llb^{\bs w}E$
\vvn.1>
is a $\B$-submodule, and the map $\ker_\Llb^{\bs w}E\to\M_\Llb$ is
an isomorphism of $\B_\lba$-modules.

Lemma~\ref{zero} implies that elements of the ideal $I^\B_\Llb$ act on
$\M_\Llb$ by zero. Hence, they act by zero on $\ker_\Llb^{\bs w}E$, that is,
$\ker_\Llb^{\bs w}E\subset \ker\>(I^\B_\Llb)\,$. On the other hand, we have
\be
\dim\>\ker\>(I^\B_\Llb)\,=\,\dim\>\Ann(\bat I^\O_\Llb)\,=\,
\dim\>\O_\Llb\,=\,\dim\>\M_\Llb\,=\,\dim\>\ker_\Llb^{\bs w}E\,,
\ee
see Theorem~\ref{second}, Corollary~\ref{AnnI} and formula~\Ref{dimO},
which yields \,$\ker_\Llb^{\bs w}E=\ker\>(I^\B_\Llb)\,$.
Proposition~\ref{kerI} is proved.
\qed

\begin{rem}
Note that formula~\Ref{dimO} is the key ingredient of the proof.
\end{rem}

\section{Applications}
\label{app sec}
\subsection{Action of the Bethe algebra in a tensor product of evaluation
modules}
In this section we summarize obtained results in a way
independent from  the main part of the paper.
For convenience, we recall some definitions.

Let $\bs K=(K_1,\dots,K_N)$ be a sequence of distinct complex numbers.
The Bethe algebra $\B$ is a commutative subalgebra of $\Uglnt$,
defined in Section~\ref{secbethe} with the help of this sequence.
 It is generated by the elements
$B_{ij}$, $i=1,\dots,N$, $j\in\Z_{\ge i}$, given by
formula~\Ref{Bi}. The Bethe algebra depends on the choice of $\bs K$.
In the remainder of the paper we will denote this algebra by $\B_{\bs K}$.

\sskip
If $M$ is a $\B_{\bs K}$-module and $\xi:\B_{\bs K}\to\C$ a homomorphism, then
the eigenspace of the
$\B_{\bs K}$-action on $M$ corresponding to $\,\xi\,$ is defined
as $\,\bigcap_{F\in\B_{\bs K}}
\ker(F|_M-\xi(F))$ and the generalized eigenspace of
the $\B_{\bs K}$-action on $M$ corresponding to $\,\xi\,$ is defined as
$\,\bigcap_{F\in\B_{\bs K}}\bigl(\,\bigcup_{m=1}^\infty\ker(F|_M-\xi(F))^m\bigr)$.

\sskip
For a partition $\bs\la$ with at most $N$ parts, let $L_{\bs\la}$ be
the irreducible finite-dimensional $\gln$-module of highest weight $\bs\la$.

Let $\bs\la^{(1)},\dots,\bs\la^{(k)}$ be partitions with at most $N$ parts,
$b_1,\dots,b_k$ distinct complex numbers. We are interested in the action
of the Bethe algebra $\B_{\bs K}$ on the tensor product
$\otimes_{s=1}^kL_{\bs\la^{(s)}}(b_s)$ of evaluation $\glnt$-modules.

Since $\B_{\bs K}$ commutes with the subalgebra $U(\h)\subset\Uglnt$,
the action of $\B_{\bs K}$ preserves 
the weight subspaces of $\otimes_{s=1}^kL_{\bs\la^{(s)}}(b_s)$.

\sskip
Denote $\bs\La=(\bs\la^{(1)},\dots,\bs\la^{(k)})$. Given a partition $\bs\la$
with at most $N$ parts such that $|\bs\la|=\sum_{s=1}^k|\bs\la^{(s)}|\,$,
let $\Dlb$ be the set of all monic  differential operators of
order $N$,
\beq
\label{mcD}
\D\,=\,\der^N+\>\sum_{i=1}^N\,h_i^\D(u)\,\der^{N-i}\,,
\eeq
where $\der=d/du$, with the following properties:
\begin{enumerate}
\flati
\item[\=a]
The singular points of $\D$ are at\/ $b_1,\dots,b_k$ and $\infty$ only.
\item[\=b]
The exponents of $\D$ at\/ \>$b_s$\>, \,$s=1,\dots,k$, are equal to
$\,\la_N^{(s)},\,\la_{N-1}^{(s)}+1,\alb\,\dots\,,\la_1^{(s)}+N-1\,$.
\item[\=c]
The kernel of $\D$ is generated by quasi-exponentials of the form
\bea
g_i(u)\,=\,e^{K_iu}\,(u^{\la_i}+g_{i1}u^{\la_i-1}+\dots+g_{i\la_i})\,,
\qquad i=1,\dots,N\,,
\eea
where $g_{ij}$ are suitable complex numbers.
\end{enumerate}

A differential operator $\D$ belongs to the set $\Dlb$ if and only if
the kernel of $\D$ is a point of the intersection 
$\,\OmLb$\>, see Lemma~\ref{lem on intersection}.

Denote $n_s=|\bs\la^{(s)}|\,$, $\,s=1,\dots,k$, \,and $\,n=\sum_{s=1}^kn_s\,$.

\begin{thm}
\label{BL}
The action of the Bethe algebra $\,\B_{\bs K}$ on
$\otimes_{s=1}^kL_{\bs\la^{(s)}}(b_s)$ has the following properties.
\begin{enumerate}
\item[(i)]
For every $i=1,\dots,N$, the action of the series\/ $B_i(u)$ is given by
\vvn.1>
the power series expansion in\/ $u^{-1}\!$ of a rational function of the form
\vvn.2>
$\,A_i(u)\prod_{s=1}^k(u-b_s)^{-n_s}$,
where $A_i(u)$ is a polynomial of degree $\,n$ with coefficients in
$\,\End(\otimes_{s=1}^kL_{\bs\la^{(s)}})$.

\sskip
\item[(ii)]
The image of $\,\B_{\bs K}$ in
$\,\End (\otimes_{s=1}^kL_{\bs\la^{(s)}})$
\vvn.1>
is a maximal commutative subalgebra of dimension
$\,\dim\>\otimes_{s=1}^kL_{\bs\la^{(s)}}$.

\sskip
\item[(iii)]
Each eigenspace of the action of $\,\B_{\bs K}$ is one-dimensional.

\sskip
\item[(iv)] Each generalized eigenspace of the action of $\,\B_{\bs K}$ is
generated over $\,\B_{\bs K}$ by one vector.

\sskip
\item[(v)] The eigenspaces of the action of $\,\B_{\bs K}$ on
$(\otimes_{s=1}^kL_{\bs\la^{(s)}}(b_s))_{\bs\la}$ are in a one-to-one
correspondence with differential operators from $\>\Dlb\,$. Moreover,
if\/ $\D$ is the differential operator, corresponding to an eigenspace, then
the coefficients of the series\/ $h_i^\D(u)$ are the eigenvalues of the action
of the respective coefficients of the series\/ $\,B_i(u)$.

\sskip
\item[(vi)] The eigenspaces of the action of $\,\B_{\bs K}$ on
$(\otimes_{s=1}^kL_{\bs\la^{(s)}}(b_s))_{\bs\la}$ are in a one-to-one
correspondence with points of the intersection $\,\OmLb$\>, given by~\Ref{Omega}.
\end{enumerate}
\end{thm}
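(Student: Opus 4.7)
The plan is to deduce all six parts as consequences of Theorem~\ref{third}, which identifies the Bethe algebra $\Blb$ with the algebra of functions $\OLb$ on the Schubert intersection, and identifies the module $\Mlb$ with the coregular representation $(\OLb)^*$ via the intertwining maps $\tau_\Llb$ and $\mu_\Llb$. Combined with the fact (Lemma~\ref{frobenius}) that $\OLb$ is Frobenius, together with the structural lemmas of Section~\ref{comalg}, this essentially delivers parts (ii)--(vi). Part (i) is independent and comes directly from Lemma~\ref{Apol} and Corollary~\ref{Apol2}.

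For part (i), the series $B_i(u)\prod_{s=1}^k(u-b_s)^i$ project to polynomials in $u$ with coefficients in $\End(\otimes_s L_{\bs\la^{(s)}})$ by Corollary~\ref{Apol2}; rewriting gives $B_i(u) = A_i(u)\prod_s(u-b_s)^{-n_s}$ with $A_i(u)$ polynomial, and the degree bound $n$ follows from the leading behavior~\Ref{Bii}. For part (ii), Theorem~\ref{third} yields an algebra isomorphism $\Blb \simeq \OLb$, hence $\dim\Blb = \dim\OLb = \dim\Mlb$ by \Ref{dimO}. Since $\Blb$ acts on $\Mlb \simeq (\OLb)^*$ as the coregular representation, Lemma~\ref{coreg} gives that the image is a maximal commutative subalgebra of $\End(\Mlb)$.

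Parts (iii) and (iv) rest on the decomposition of the finite-dimensional commutative algebra $\OLb$ as a direct sum of local Frobenius algebras, one for each character. Under the coregular action, each local summand $A_0$ with maximal ideal $\m \subset A_0$ acts on its own dual $A_0^*$, which by the Frobenius property is $A_0$-module isomorphic to $A_0$ itself. Hence each generalized eigenspace of $\Blb$ on $\Mlb$ corresponds to one local factor; it is cyclic over $\Blb$ (the isomorphism $A_0 \to A_0^*$ sends $1$ to a cyclic generator), yielding (iv), and its socle --- the image of the one-dimensional ideal $\m^\perp$ from Lemma~\ref{inverse} --- is one-dimensional and coincides with the eigenspace, yielding (iii).

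Parts (v) and (vi) use the bijection between characters of $\OLb$ and points of $\OmLb$. By Proposition~\ref{OIX}, characters of $\OLb = \O_{\bs\la}/I^\O_\Llb$ are exactly the points of $\OmLb$, which gives (vi). For (v), each point $X \in \OmLb$ is an $N$-dimensional quasi-exponential space and yields the monic operator $\D_X$ with kernel $X$, which lies in $\Dlb$ by Lemma~\ref{lem on intersection}; the identification of the coefficients of $h_i^\D(u)$ with eigenvalues of $B_i(u)$ is built into $\tau_\Llb$, which sends each generator $F_{ij} \in \O_{\bs\la}$ (the coefficient of $u^{-j}$ in $F_i(u)$, cf.~\Ref{Fi}) to $\hat B_{ij}$, so evaluating a character at $F_{ij}$ returns the relevant coefficient of $h_i^\D(u)$. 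The subtlest step, in my view, is (iv): one must ensure that the generalized eigenspace is cyclic over $\Blb$ as a whole and not merely over the local summand --- but this is forced by Theorem~\ref{third} together with the fact that the orthogonal local factors in the decomposition of $\OLb$ act by zero on this eigenspace, reducing the claim to the local Frobenius case handled above.
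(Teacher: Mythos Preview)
Your proposal is correct and follows essentially the same route as the paper, which proves part~(i) via Corollary~\ref{Apol2} and the remaining parts via Theorem~\ref{third}, Lemma~\ref{lem on intersection}, and the Frobenius-algebra facts of Section~\ref{comalg}; you have simply unpacked these ``standard facts'' in more detail than the paper does. One small remark: parts~(ii)--(iv) are stated for the full tensor product rather than a fixed weight space, so one should note (as both you and the paper leave implicit) that distinct weight spaces carry distinct $\B_{\bs K}$-characters---the elements $B_{i1}$ act by scalars determining $\bs\la$, cf.\ Lemma~\ref{chi}---which lets the weight-space statements assemble into the global ones.
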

\begin{proof}
The first property follows from Corollary~\ref{Apol2}. The other properties
follow from Theorem~\ref{third}, Lemma~\ref{lem on intersection}, and standard
facts about the coregular representations of Frobenius algebras given in
Section~\ref{comalg}.
\end{proof}

\begin{cor}
\label{card}
The following three statements are equivalent.
\begin{enumerate}
\item[(i)]
The action of the Bethe algebra $\>\B_{\bs K}$ on
$(\otimes_{s=1}^kL_{\bs\la^{(s)}}(b_s))_{\bs\la}$ is diagonalizable.

\sskip
\item[(ii)]
The set $\>\Dlb\,$ consists of\/
$\,\dim\>(\otimes_{s=1}^kL_{\bs\la^{(s)}})_{\bs\la}$ distinct points.

\sskip
\item[(iii)]
The set $\,\OmLb$ consists of\/
$\,\dim\>(\otimes_{s=1}^kL_{\bs\la^{(s)}})_{\bs\la}$ distinct points.
\qed
\end{enumerate}
\end{cor}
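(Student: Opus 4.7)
The plan is to derive the corollary as a bookkeeping consequence of Theorem \ref{BL}, which has already set up the dictionary between eigenspaces, differential operators in $\Dlb$, and points of $\OmLb$. Concretely, parts (v) and (vi) of Theorem \ref{BL} give explicit bijections
\[
\{\text{eigenspaces of }\B_{\bs K}\text{ on }(\otimes_{s=1}^kL_{\bs\la^{(s)}}(b_s))_{\bs\la}\}\;\longleftrightarrow\;\Dlb\;\longleftrightarrow\;\OmLb,
\]
so the cardinalities of $\Dlb$ and $\OmLb$ agree and equal the number of eigenspaces. This immediately establishes the equivalence (ii)$\Leftrightarrow$(iii), independently of whether the action is diagonalizable.

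For (i)$\Leftrightarrow$(ii), I would use part (iii) of Theorem \ref{BL}, which asserts that every eigenspace is one-dimensional. Since $\B_{\bs K}$ is commutative and $(\otimes_{s=1}^kL_{\bs\la^{(s)}}(b_s))_{\bs\la}$ is finite-dimensional, the weight subspace decomposes as a direct sum of generalized eigenspaces; the action is diagonalizable precisely when each generalized eigenspace coincides with its eigenspace. In that case the dimensions of the one-dimensional eigenspaces must sum to $\dim\>(\otimes_{s=1}^kL_{\bs\la^{(s)}})_{\bs\la}$, so the number of distinct eigenspaces equals this dimension, giving (ii) via the bijection of part (v). Conversely, if the number of distinct eigenvalues equals $\dim\>(\otimes_{s=1}^kL_{\bs\la^{(s)}})_{\bs\la}$, then the sum of the one-dimensional eigenspaces already fills the whole weight subspace, which forces diagonalizability.

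There is essentially no obstacle: the content of the corollary is a direct translation of Theorem \ref{BL}(iii), (v), (vi) into a statement about cardinalities. The only point that needs mild care is noting that ``number of eigenvalues equals dimension of the space, all eigenspaces one-dimensional'' is equivalent to diagonalizability for an operator on a finite-dimensional vector space—a standard linear-algebra fact that I would simply invoke rather than prove. Everything else is already contained in Theorem \ref{BL} and in the Frobenius-algebra machinery of Section \ref{comalg} via Theorem \ref{third}.
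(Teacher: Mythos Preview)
Your proposal is correct and matches the paper's approach: the corollary carries a bare \qed\ with no written proof, as it is regarded as an immediate consequence of Theorem~\ref{BL}(iii), (v), (vi), exactly along the lines you spell out.
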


The intersection  $\OmLb$ is
{\it transversal\/} if the scheme-theoretic intersection
$\O_\Llb$ is a direct sum of one-dimensional algebras.

\begin{cor}
\label{cor 2}
The action of the Bethe algebra $\>\B_{\bs K}$ on
\vvn.1>
$(\otimes_{s=1}^kL_{\bs\la^{(s)}}(b_s))_{\bs\la}$ is diagonalizable,
if and only the  $\,\OmLb$
is transversal.
\end{cor}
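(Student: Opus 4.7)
The plan is to reduce the statement to the structure theory of finite-dimensional commutative $\C$-algebras, using Theorem~\ref{third} as the bridge between geometry and representation theory. By Theorem~\ref{third}, the map $\tau_\Llb$ identifies $\B_\Llb$ with $\O_\Llb$ as algebras, and under this identification the $\B_\Llb$-module $\M_\Llb$ corresponds to the coregular representation of $\O_\Llb$ on $(\O_\Llb)^*$. Since $\O_\Llb$ is Frobenius (Lemma~\ref{frobenius}), the regular and coregular representations of $\O_\Llb$ are isomorphic $\O_\Llb$-modules, as the remark following Theorem~\ref{third} records. Therefore, the $\B_\Llb$-action on $\M_\Llb$ is diagonalizable if and only if the action of $\O_\Llb$ on itself by multiplication is diagonalizable.

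Next, I would invoke the standard structure theory for a finite-dimensional commutative $\C$-algebra $A$: such an $A$ decomposes uniquely as a direct sum $A = \bigoplus_j A_j$ of local algebras, and the regular representation is diagonalizable if and only if each $A_j$ is one-dimensional, i.e.\ its maximal ideal vanishes. (Indeed, a nonzero nilpotent in $A_j$ obstructs diagonalizability on any faithful module, and if every $A_j = \C$ then $A \simeq \C^m$ and the regular representation is manifestly diagonal.) This is literally the definition of transversality of $\OmLb$ given immediately before the statement of Corollary~\ref{cor 2}. Applying this with $A = \O_\Llb$ closes the equivalence.

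As a consistency check, one may trace the equivalence through Corollary~\ref{card}: transversality of $\OmLb$ means $\dim\O_\Llb$ equals the number $|\OmLb|$ of points, matching condition (iii) of Corollary~\ref{card} since $\dim\O_\Llb = \dim\M_\Llb$ by formula~\Ref{dimO}; conversely, diagonalizability means $\M_\Llb$ splits into $\dim\M_\Llb$ one-dimensional eigenspaces, each furnishing (by Theorem~\ref{BL}(v)) a distinct character of $\O_\Llb$, hence a distinct local summand of dimension one.

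There is no substantive obstacle: once Theorem~\ref{third} is in hand, the corollary is a direct consequence of the dictionary between diagonalizability of the regular representation and semisimplicity for finite-dimensional commutative $\C$-algebras. The only point that requires verification is that the equivalence ``diagonalizable action $\Leftrightarrow$ each local summand is $\C$'' passes through the coregular representation, but this is automatic since $\O_\Llb$ is Frobenius and since the isomorphism $\tau_\Llb$ intertwines the two algebra actions by construction.
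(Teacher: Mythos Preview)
Your proposal is correct. The paper's own proof is essentially your ``consistency check'' paragraph: it observes that $\O_\Llb$ decomposes as a direct sum of local algebras indexed by the points of $\OmLb$, so transversality is equivalent to $\dim\O_\Llb=|\OmLb|$, which by formula~\Ref{dimO} is condition~(iii) of Corollary~\ref{card}, hence equivalent to diagonalizability. Your primary argument bypasses Corollary~\ref{card} and goes more directly through Theorem~\ref{third} and the structure theory of finite-dimensional commutative $\C$-algebras (the regular representation is diagonalizable if and only if every local summand is one-dimensional); this is a slightly cleaner route, but it rests on the same input and the two arguments are interchangeable.
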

\begin{proof}
The algebra $\O_\Llb$ is a direct sums of local algebras, each local summand
corresponding to a point of the set $\OmLb\,$. Therefore, the intersection
$\,\OmLb$ is transversal if and only if the dimension of
$\O_\Llb$ equals the cardinality of $\OmLb\,$. Corollary~\ref{card}
completes the proof.
\end{proof}

\begin{cor}
\label{real}
Let $K_1,\dots,K_N$ be distinct real numbers.
Let $b_1,\dots,b_k$ be distinct real numbers. Then
\begin{enumerate}
\item[(i)] The set $\>\Dlb\,$ consists of\/
$\,\dim\>(\otimes_{s=1}^kL_{\bs\la^{(s)}})_{\bs\la}$ distinct points;
\item[(ii)]
The intersection  $\,\OmLb$ consists of\/
$\dim\>(\otimes_{s=1}^kL_{\bs\la^{(s)}})_{\bs\la}$ distinct points
and is transversal.
\end{enumerate}
\end{cor}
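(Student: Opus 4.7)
The strategy is to reduce both assertions to the diagonalizability of the Bethe algebra $\B_{\bs K}$ acting on $(\otimes_{s=1}^k L_{\bs\la^{(s)}}(b_s))_{\bs\la}$. Once diagonalizability is established under the reality hypothesis, assertion (i) follows immediately from Corollary~\ref{card}, and assertion (ii) follows by combining Corollary~\ref{card} with Corollary~\ref{cor 2}. So the whole task is to prove that when $K_1,\dots,K_N$ and $b_1,\dots,b_k$ are all real, the commuting algebra $\B_{\bs K}$ is semisimple on the weight space.

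To achieve this I would equip $\otimes_{s=1}^k L_{\bs\la^{(s)}}$ with a positive definite Hermitian form, namely the tensor product of the Shapovalov (contravariant) forms on the irreducible factors. On each polynomial irreducible $\gln$-module $L_{\bs\la^{(s)}}$ there exists a unique, up to positive scalar, positive definite Hermitian form $\langle\,\cdot\,,\cdot\,\rangle_s$ satisfying $\langle e_{ij}x,y\rangle_s=\langle x,e_{ji}y\rangle_s$ for all $i,j$; its existence follows from the fact that $L_{\bs\la^{(s)}}$ embeds unitarily into a tensor power of $V$. The tensor product of these forms is positive definite, and it restricts to a positive definite form on the weight subspace $(\otimes_{s=1}^k L_{\bs\la^{(s)}}(b_s))_{\bs\la}$, since that subspace is identified with $(\otimes_{s=1}^k L_{\bs\la^{(s)}})_{\bs\la}$ as a vector space.

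The key step is to verify that, when $K_i\in\R$ and $b_s\in\R$, every generator $B_{ij}$ of $\B_{\bs K}$ acts by a self-adjoint operator with respect to this Hermitian form. This rests on two observations. First, on the evaluation module $\otimes_{s=1}^k L_{\bs\la^{(s)}}(b_s)$ the element $e_{ij}\otimes t^r$ acts as $\sum_{s=1}^k b_s^r (e_{ij})_s$, whose Hermitian adjoint equals $\sum_{s=1}^k b_s^r (e_{ji})_s$ precisely because $b_s$ is real. Second, the matrix $A$ whose row determinant defines $\D^\B$ has $(i,j)$-entry $-e_{ji}(u)$ off the diagonal and the self-adjoint entry $\der-K_i-e_{ii}(u)$ on the diagonal (using $K_i\in\R$), so under the anti-involution $e_{ij}\mapsto e_{ji}$ it maps to its formal transpose $A^T$. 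Combining this with the standard row/column determinant identity available for the Manin-type matrix appearing here yields that each coefficient of $\D^\B$, and hence each $B_{ij}$, is self-adjoint on $(\otimes_{s=1}^k L_{\bs\la^{(s)}}(b_s))_{\bs\la}$. A commuting family of self-adjoint operators on a finite-dimensional Hermitian space is simultaneously diagonalizable, so the $\B_{\bs K}$-action is diagonalizable and both claims follow by the reduction above.

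The principal obstacle is the self-adjointness verification sketched in the previous paragraph: the row determinant mixes noncommuting operator-valued entries $-e_{ji}(u)$ with the differentiation symbol $\der$ and the constants $K_i$, so one cannot simply read off self-adjointness from the ``symmetry'' of $A$ under the anti-involution. The required identity relating $\rdet A$ and its image under $e_{ij}\mapsto e_{ji}$ has to be established by a careful computation, either by direct expansion of the row determinant and reordering of operator products modulo the commutation relations in $\Uglnt$, or by an appeal to known row-column determinant identities for the class of Manin matrices naturally attached to the $\gln$ Gaudin construction.
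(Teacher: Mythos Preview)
Your reduction is exactly the paper's: both claims follow from diagonalizability of $\B_{\bs K}$ on $(\otimes_{s=1}^kL_{\bs\la^{(s)}}(b_s))_{\bs\la}$ via Corollaries~\ref{card} and~\ref{cor 2}. The only difference is that the paper does not prove diagonalizability under the reality hypothesis at all --- it simply cites \cite{MTV1} (cf.\ \cite{MTV2}) for that fact --- whereas you sketch the underlying argument (tensor Shapovalov form, self-adjointness of the $B_{ij}$). Your sketch is the correct one and is indeed what those references do; the ``principal obstacle'' you flag, namely invariance of the coefficients of $\D^\B$ under the anti-automorphism $e_{ij}(u)\mapsto e_{ji}(u)$, is precisely the computation carried out there using the Manin matrix formalism, so citing \cite{MTV1} at that point would close your argument just as it closes the paper's.
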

\begin{proof}
If $K_1,\dots,K_N$ are distinct real numbers and
$b_1,\dots,b_k$ are distinct real numbers, then the action of the Bethe
\vvn.1>
algebra $\>\B_{\bs K}$ on $(\otimes_{s=1}^kL_{\bs\la^{(s)}}(b_s))_{\bs\la}$
is diagonalizable, see~\cite{MTV1}, cf. \cite{MTV2}.
\end{proof}

Results similar to Theorem~\ref{BL} and Corollary~\ref{card}
hold for the action of the Bethe algebra $\B_{\bs K}$ on
the $\glnt$-module $\otimes_{s=1}^k W_{n_s}(b_s)$, the Weyl module associated
with $\bs n=(n_1,\dots,n_k)$ and $\bs b=(b_1,\dots,b_k)$, defined in
Section~\ref{secweyl}. The action of $\B_{\bs K}$ preserves 
 the weight subspaces
of $\otimes_{s=1}^k W_{n_s}(b_s)$. 

\sskip
Recall that $V$ denotes the irreducible $\gln$-module of highest weight
$(1,0,\dots,0)$, which is the vector representation of $\gln$.

\sskip
Denote by $\Dlnb$ the set of all monic differential operators $\D$ of order $N$
with the following properties.

\begin{enumerate}
\flati
\item[\=a]
The kernel of $\D$ is generated by quasi-exponentials of the form
\bea
g_i(u)\,=\,e^{K_iu}\,(u^{\la_i}+g_{i1}u^{\la_i-1}+\dots+g_{i\la_i})\,,
\qquad i=1,\dots,N\,,
\eea
where $\la_1+\dots+\la_N=n$ and $g_{ij}$ are suitable complex numbers.

\item[\=b]
The first coefficient $h_1^\D(u)$ of $\D$, see~\Ref{mcD},
equals $\>\sum_{i=1}^NK_i\,+\,\sum_{s=1}^k n_s\>(b_s-u)^{-1}$.
\end{enumerate}

\sskip\noindent
If $\D\in\Dlnb$, then $\D$ is a differential operator with singular
points at\/ $b_1,\dots,b_k$ and $\infty$ only.

\medskip

Denote by $\>\Omn$ the set of all $N$-dimensional spaces of
quasi-exponentials with a basis of the form
\bea
g_i(u)\,=\,e^{K_iu}\,(u^{\la_i}+g_{i1}u^{\la_i-1}+\dots+g_{i\la_i})\,,
\qquad i=1,\dots,N\,,
\eea
and such that
\bea
\Wr(g_1(u),\dots,g_N(u))\ =\ 
e^{\>\sum_{i=1}^N K_iu}\!\prod_{1\le i<j\le N}(K_j-K_i)
\prod_{s=1}^k(u-b_s)^{n_s}\ .
\eea
A differential operator $\D$ belongs to the set $\Dlnb$ if and only if
the kernel of $\D$ belongs to the set $\>\Omn$.

\begin{thm}
\label{BW}
The action of the Bethe algebra $\,\B_{\bs K}$ on
$\otimes_{s=1}^k W_{n_s}(b_s)$ has the following properties.
\begin{enumerate}
\item[(i)]
For every $i=1,\dots,N$, the action of the series\/ $B_i(u)$ is given by
\vvn.1>
the power series expansion in\/ $u^{-1}\!$ of a rational function of the form
\vvn.2>
$\,A_i(u)\prod_{s=1}^k(u-b_s)^{-n_s}$,
where $A_i(u)$ is a polynomial of degree $\,n$ with coefficients in
$\,\End\bigl(\otimes_{s=1}^k W_{n_s}\bigr)$.

\sskip
\item[(ii)]
The image of $\,\B_{\bs K}$ in $\,\End\bigl(\otimes_{s=1}^k W_{n_s}\bigr)$
\vvn.1>
is a maximal commutative subalgebra of dimension
$\,\dim\>V^{\otimes n}$.

\sskip
\item[(iii)]
Each eigenspace of the action of $\,\B_{\bs K}$ is one-dimensional.

\sskip
\item[(iv)] Each generalized eigenspace of the action of $\,\B_{\bs K}$ is
generated over $\,\B_{\bs K}$ by one vector.

\sskip
\item[(v)] The eigenspaces of the action of $\,\B_{\bs K}$ on
$\otimes_{s=1}^k W_{n_s}(b_s)$ are in a one-to-one correspondence
with differential operators from $\>\Dlnb\,$. Moreover,
if\/ $\D$ is the differential operator, corresponding to an eigenspace, then
the coefficients of the series\/ $h_i^\D(u)$ are the eigenvalues of the action
of the respective coefficients of the series\/ $\,B_i(u)$.

\sskip
\item[(vi)] The eigenspaces of the action of $\,\B_{\bs K}$ on
\vvn.1>
$\otimes_{s=1}^k W_{n_s}(b_s)$ are in a one-to-one
correspondence with spaces of polynomials from\/ $\Wrnbi$.
\end{enumerate}
\end{thm}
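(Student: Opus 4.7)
The plan is to follow the strategy of the proof of Theorem~\ref{BL}, with Weyl modules replacing tensor products of irreducible modules and with Theorem~\ref{second} replacing Theorem~\ref{third}. The action of $\B_{\bs K}$ commutes with $U(\h)$ by Theorem~\ref{T-thm}, so there is a $\B_{\bs K}$-invariant weight decomposition
\begin{equation*}
\otimes_{s=1}^k W_{n_s}(b_s)\,=\,\bigoplus_{\bs\la}\M_\lba\,,
\end{equation*}
where $\bs\la$ runs over partitions of $n$ with at most $N$ parts and $\bs a$ is determined by $(\bs b,\bs n)$ via~\Ref{ab}.

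Property (i) is immediate from Corollary~\ref{Cpol} combined with the degree estimate of Lemma~\ref{B deg}. For properties (ii)--(vi), I would apply Theorem~\ref{second} to each summand: it identifies the image of $\B_{\bs K}$ on $\M_\lba$ with the Frobenius algebra $\O_\lba$ (Lemma~\ref{local Wr}) and realizes $\M_\lba$ as the regular representation of $\O_\lba$. Summing over $\bs\la$ and invoking Lemma~\ref{direct}, the combined image on $\otimes_sW_{n_s}$ becomes the Frobenius algebra $\bigoplus_\bs\la\O_\lba$, acting on itself by its regular representation. The standard facts from Section~\ref{comalg} about regular representations of Frobenius algebras then yield (ii) (maximal commutative of dimension $\sum_\bs\la\dim\M_\lba=\dim V^{\otimes n}$ by Lemma~\ref{weylb}(i)), (iii) (one-dimensional socles of local summands), and (iv) (each local summand is cyclic).

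For (v) and (vi), the characters of $\bigoplus_\bs\la\O_\lba$ are in bijection with points of $\bigsqcup_\bs\la\pi^{-1}(\bs a)$, which equals $\Omn$ by the very definition of the Wronski map with exponents $\bs K$; each such point $X$ is the kernel of a unique monic order-$N$ differential operator $\D_X$, giving the bijection $\Omn\leftrightarrow\Dlnb$. The matching of eigenvalues of $B_i(u)$ with the coefficients $h_i^{\D_X}(u)$ follows from Theorem~\ref{first} descended through Theorem~\ref{second}: the isomorphism $\tau_\lba$ sends the coefficient $F_i(u)$ of $\D^\O_{\bs\la}$ to the image of $B_i(u)$, and its value at $X$ is the corresponding coefficient of $\D_X$.

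The main obstacle will be showing that the combined image in $\End(\otimes_sW_{n_s})$ is the full direct sum $\bigoplus_\bs\la\O_\lba$, and not a proper commutative subalgebra of it. This reduces to checking that characters on distinct weight subspaces correspond to distinct points of $\Omn$, so that $\bigoplus_\bs\la\O_\lba$ is genuinely realized as the algebra of regular functions on the disjoint union $\Omn$; this is manifest because quasi-exponentials belonging to different strata have different degree sequences. A secondary nuisance is reconciling the two normalizations of the universal differential operator (the version $\D^\B$ from Section~\ref{secbethe} versus $\D^\O_{\bs\la}$ normalized by its Wronskian), but this bookkeeping has already been done in the earlier sections and need only be invoked.
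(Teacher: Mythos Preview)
Your proposal is correct and follows essentially the same route as the paper: property~(i) from Corollary~\ref{Cpol} (equivalently, Lemmas~\ref{factor=weyl} and~\ref{capelli}), and properties~(ii)--(vi) from Theorem~\ref{second} together with the Frobenius-algebra facts of Section~\ref{comalg}. The paper's own proof is a two-line citation of exactly these ingredients (plus formulae~\Ref{DOla} and~\Ref{F1} for the eigenvalue matching), and leaves the passage from the weight-by-weight statement of Theorem~\ref{second} to the global statement on all of $\otimes_{s=1}^k W_{n_s}(b_s)$ entirely implicit; you have spelled out precisely the step the paper suppresses, namely that the images $\B_\lba$ for distinct weights $\bs\la$ glue to the full direct sum $\bigoplus_{\bs\la}\O_\lba$ because points in different strata $\pi^{-1}(\bs a)\subset\Om_{\bs\la}$ are distinguished by their degree tuples.

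One small correction: in your weight decomposition the index $\bs\la$ should run over all nonnegative integer $N$-tuples summing to $n$ (weights of $V^{\otimes n}$), not only over partitions; the construction of $\Om_{\bs\la}$, $\O_\lba$, and Theorem~\ref{second} goes through verbatim for arbitrary such tuples since the $K_i$ are distinct, and this is exactly how $\Omn$ is defined. This does not affect the argument.
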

\begin{proof}
The first property follows from Lemmas~\ref{factor=weyl} and~\ref{capelli}.
The other properties follow from Theorem~\ref{second}, formulae~\Ref{DOla}
and~\Ref{F1}, and standard facts about the regular representations of Frobenius
algebras given in Section~\ref{comalg}.
\end{proof}

\begin{cor}
\label{cardW}
The following three statements are equivalent.
\begin{enumerate}
\item[(i)]
The action of the Bethe algebra $\>\B_{\bs K}$ on
$\otimes_{s=1}^k W_{n_s}(b_s)$ is diagonalizable.

\sskip
\item[(ii)]
The set $\>\Dlnb\,$ consists of\/
$\,\dim\>V^{\otimes n}$ distinct points.

\sskip
\item[(iii)]
The set $\,\Omn$ consists of\/
$\,\dim\>V^{\otimes n}$ distinct points.
\qed
\end{enumerate}
\end{cor}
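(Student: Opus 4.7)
The plan is to deduce Corollary \ref{cardW} from Theorem \ref{BW} in exactly the same way that Corollary \ref{card} was deduced from Theorem \ref{BL}; no new ideas beyond bookkeeping are required once Theorem \ref{BW} is in hand.

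First I would record the basic dimension count: by Lemma \ref{weylb}(i), each factor $W_{n_s}(b_s)$ is isomorphic to $V^{\otimes n_s}$ as a $\gln$-module, so the full space satisfies
\[
\dim\bigl(\otimes_{s=1}^k W_{n_s}(b_s)\bigr)\,=\,\dim V^{\otimes n}.
\]
Comparing with Theorem \ref{BW}(ii), the image of $\B_{\bs K}$ in the endomorphism algebra has the same dimension as the module itself, so the action is diagonalizable if and only if the module decomposes as a direct sum of one-dimensional eigenspaces of $\B_{\bs K}$.

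Next I would combine parts (iii) and (iv) of Theorem \ref{BW}. Because each eigenspace is one-dimensional and each generalized eigenspace is cyclic over $\B_{\bs K}$, diagonalizability is equivalent to every generalized eigenspace coinciding with its eigenspace, which in turn is equivalent to the number of distinct $\B_{\bs K}$-characters on $\otimes_{s=1}^k W_{n_s}(b_s)$ being equal to $\dim V^{\otimes n}$. Using part (v), characters are in bijection with elements of $\Dlnb$, so (i) is equivalent to $|\Dlnb|=\dim V^{\otimes n}$, giving (i) $\Leftrightarrow$ (ii). Using part (vi) (together with the observation in the text that $\D\in\Dlnb$ if and only if $\ker\D\in\Omn$), characters are in bijection with points of $\Omn$, which yields (i) $\Leftrightarrow$ (iii).

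There is no real obstacle here; the only thing to watch is not to over-claim. In particular, the inequalities $|\Dlnb|\le\dim V^{\otimes n}$ and $|\Omn|\le\dim V^{\otimes n}$ always hold because each point supports at least a one-dimensional eigenspace by Theorem \ref{BW}(iii), so the equivalences reduce to the question of when the inequalities become equalities. The whole corollary is then a two-line consequence of Theorem \ref{BW}, exactly parallel to the proof of Corollary \ref{card}.
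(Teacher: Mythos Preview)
Your proposal is correct and follows exactly the approach the paper intends: the corollary is stated with a \qed and no proof, as an immediate consequence of Theorem~\ref{BW} parallel to how Corollary~\ref{card} follows from Theorem~\ref{BL}. Your write-up simply makes explicit the bookkeeping the paper leaves to the reader.
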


\section{Completeness of Bethe ansatz}
\label{Completness of Bethe}

\subsection{Generic points of $\Omlb$}

  Let
$\Omlb$ be the affine $n+N$-dimensional space with coordinates
$g_{ij},\,i=1,\dots,N,\,j=1,\dots,\la_i$, and $k_1,\dots,k_N$.
We identify points \>$Y\in\Omlb$ with $N$-dimensional complex
vector spaces generated by quasi-exponentials
\beq
\label{def of space}
g_i(u,Y)\,=\,e^{k_i(Y)_iu}\,
(u^{\la_i}+g_{i1}(Y)u^{\la_i-1}+\dots+g_{i\la_i}(Y))\,,
\qquad i=1,\dots,N\,.
\eeq

Let \>$Y\in\Omlb$ be a point with
distinct cordinates $k_1(Y),\dots,k_N(Y)$.
Denote by $\B_Y \subset \Uglnt$ the Bethe algebra constructed
in Section \ref{secbethe}  with the help of the sequence 
${\bs K} = (K_1,\dots,K_N)$ where $K_i=k_i(Y)$ for all $i$.

\medskip

For  \>$Y\in\Omlb$, 
introduce the polynomials
$\{y_0(u)\>,\,y_{1}(u)\>,\,\dots\,,\,y_{N-1}(u)\}$, \,by the formula
\vvn.3>
\be
y_a(u)\, e^{\sum_{i=a+1}^N k_i(Y)u}\!\!
\prod_{a<i<j\leq N}(k_i(Y)-
k_j(Y))\,
\,=\,\Wr( g_{a+1}(u,Y),\dots,g_{N}(u,Y))\,,
\ee
for $a = 0,\dots,N$.
Set
\vvn-.6>
\beq
\label{sequence l}
l_a\,=\sum_{b=a+1}^N \la_b\,,\qquad a=0,\dots,N\,.
\eeq
Clearly, $l_0=|\bs\la|$ \,and \,$l_N=0$.

\sskip
For each $a=0,\dots,N-1$, the polynomial $y_a(u)$ is a monic polynomial of
degree $l_a$.
Denote $t_1^{(a)},\dots,t_{l_a}^{(a)}$ the roots of the polynomial $y_a(u)$,
and put
\beq
\label{t z}
\bs t_{Y}\,=\,\,(t_1^{(0)},\dots,t_{l_0}^{(0)},\,\dots\,,
t_1^{(N-1)},\dots,t_{l_{N-1}}^{(N-1)})\,.
\eeq
We say that $\bs t_{Y}$ are the {\it root coordinates} of $Y$.

We say that $Y\in\Omlb$ is {\it generic} if all roots of the polynomials
$y_0(u)\>,\,y_1(u)\>,\,\,\dots\,,\,y_{N-1}(u)$ are simple and for each
$a=1,\dots,N-1$, the polynomials $y_{a-1}(u)$ and $y_a(u)$ do not have common
roots.

\medskip

If $Y$ is generic, then the root coordinates $\bs t_Y$ satisfy the Bethe ansatz
equations \cite{MV1}, cf. \cite{MTV4}:
\bea
\label{BAE Gaudin}
\sum_{j'=1}^{l_{a-1}}\frac 1{t^{(a)}_j - t^{(a-1)}_{j'}}\;-\,
\sum_{\satop{j'=1}{j'\neq j}}^{l_a}\frac 2{t^{(a)}_j - t^{(a)}_{j'}}\;+\,
\sum_{j'=1}^{l_{a+1}}\frac 1{t^{(a)}_j - t^{(a+1)}_{j'}}\;=\,K_{a+1}-K_a\,.
\vv-.2>
\eea
Here the equations are labeled by $a=1,\dots,N-1$, \,$j=1,\dots,l_a$.

\sskip
Conversely, if \,$\bs t\,=\,(t_1^{(0)},\dots,t_{l_0}^{(0)},\,\dots\,,
t_1^{(N-1)},\dots,t_{l_{N-1}}^{(N-1)})$ satisfy the Bethe ansatz equations,
then there exists a unique $Y\in \Omlb$ such that $Y$ is generic and $\bs t$ are
its root coordinates. This $Y$ is determined by the following
construction, see~\cite{MV1},  cf. \cite{MTV4}. Set
\be
\chi^a(u, \bs t)\,=\,K_a\,+\,\sum_{j=1}^{l_{a-1}}\,\frac1{u-t^{(a-1)}_j}\;-\,
\sum_{i=1}^{l_a}\,\frac 1 {u- t^{(a)}_j}\;,\qquad a=1,\dots,N\,.
\ee
Then the monic differential operator $\D_Y$
with kernel $Y$ is given by the 
formula:
\be
\D_Y\,=\,\bigl(\der -\chi^1(u,\bs t)\bigr)\,\dots\,
\bigl(\der-\chi^N(u,\bs t)\bigr)\,.
\ee
Clearly, the operator $\D_Y$ determines $Y$

\begin{lem}
\label{lem on generic pts}
Generic points form a Zariski open subset of\/ $\Omlb$.
\end{lem}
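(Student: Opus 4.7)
The plan is to exhibit the locus of generic points as the complement of the vanishing set of finitely many regular functions on a suitable open subset of $\Omlb$.

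First, I would isolate the open subset on which the definition of genericity makes sense. The polynomial $y_a(u)$ is defined by dividing the Wronskian $\Wr(g_{a+1}(u,Y),\dots,g_N(u,Y))$ by $e^{\sum_{i=a+1}^N k_i u}\prod_{a<i<j\le N}(k_i-k_j)$; in particular the definition requires the coordinates $k_{a+1}(Y),\dots,k_N(Y)$ to be pairwise distinct for each $a$. Taking the strongest requirement (the case $a=0$), the generic locus is contained in the open set $U\subset\Omlb$ on which all of $k_1(Y),\dots,k_N(Y)$ are pairwise distinct. Since $U$ is the complement of the vanishing locus of $\prod_{i<j}(k_i-k_j)$, it is Zariski open in $\Omlb$, and it suffices to show the generic locus is Zariski open in $U$.

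Next, I would verify that on $U$ each $y_a(u)$ really is a monic polynomial in $u$ of degree $l_a$, with coefficients given by regular functions on $U$. The Wronskian of $g_{a+1}(u,Y),\dots,g_N(u,Y)$ factors as $e^{\sum_{i=a+1}^N k_i u}\cdot P_a(u)$, where $P_a(u)$ is a polynomial in $u$ whose coefficients are polynomials in the $g_{ij}$ and the $k_i$. A direct computation of the highest-$u$ term shows that the coefficient of $u^{l_a}$ in $P_a(u)$ equals the Vandermonde $\prod_{a<i<j\le N}(k_j-k_i)$ (up to sign), so after dividing by the Vandermonde we obtain a monic polynomial $y_a(u)=u^{l_a}+\sum_{r=1}^{l_a}c_{a,r}u^{l_a-r}$ whose coefficients $c_{a,r}$ are rational in the coordinates of $\Omlb$ with denominators that are nonvanishing on $U$; hence $c_{a,r}\in\mathcal{O}(U)$.

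Finally I would express the genericity conditions as the nonvanishing of polynomial expressions in these coefficients. The condition that $y_a(u)$ have simple roots is equivalent to $\operatorname{disc}(y_a)\neq 0$, and the condition that $y_{a-1}(u)$ and $y_a(u)$ share no root is equivalent to $\operatorname{res}(y_{a-1},y_a)\neq 0$; both the discriminant and the resultant are universal polynomials in the coefficients of the $y_a$, hence regular functions on $U$. Therefore the generic locus equals the complement in $U$ of the zero set of the single regular function
\begin{equation*}
D \;=\; \prod_{a=0}^{N-1}\operatorname{disc}(y_a)\,\cdot\prod_{a=1}^{N-1}\operatorname{res}(y_{a-1},y_a),
\end{equation*}
and is Zariski open in $U$, and hence in $\Omlb$. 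The only substantive point in this argument is the leading-term computation that identifies the Vandermonde as the leading coefficient of $P_a(u)$, ensuring that $y_a$ is monic of the claimed degree $l_a$ and that its other coefficients are regular on $U$; once this is in place, the rest is a formal application of the fact that discriminants and resultants are polynomial in the coefficients.
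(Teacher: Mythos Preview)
Your argument is correct: restricting to the open locus $U\subset\Omlb$ where the $k_i$ are pairwise distinct, the coefficients of each $y_a$ are regular on $U$ (your leading-term computation identifying the Vandermonde as the top coefficient of $P_a(u)$ is the only nontrivial step, and it is right), and the genericity condition is then the nonvanishing of a product of discriminants and resultants, hence an open condition.

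The paper takes a different route: rather than arguing directly, it simply invokes Theorem~10.5.1 of \cite{MTV4}. Your proof is therefore more elementary and self-contained; it exhibits explicitly the regular function whose nonvanishing cuts out the generic locus, whereas the paper defers the entire argument to an external reference. The cited theorem presumably establishes a stronger statement (about the structure of the correspondence between spaces of quasi-exponentials and solutions of the Bethe ansatz equations), of which Zariski openness of the generic locus is a byproduct. For the bare statement of the lemma, your direct approach is cleaner.
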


The lemma follows from Theorem 10.5.1 in \cite{MTV4}.

\subsection{Universal weight function}
Let $\bs\la$ be a partition with at most $N$ parts. Let $l_0,\dots,l_N$
be the numbers defined in \Ref{sequence l}. Denote $n=l_0$\>,
\,$l=l_1+\dots+l_{N-1}$ \>and \,$\bs l=(l_1,\dots,l_{N-1})$.

\sskip
Consider the weight subspace $(V^{\otimes n})_{\bs\la}$ of the $n$-th tensor
power of the vector representation of $\glN$ and the space $\C^{l+n}$ with
coordinates \,$\bs t\,=\,(t_1^{(0)},\dots,t_{l_0}^{(0)},\,\dots\,,
t_1^{(N-1)},\dots,t_{l_{N-1}}^{(N-1)})$.

In this section we remind the construction of a rational map
$\omega:\C^{l+n}\to (V^{\otimes n})_{\bs\la}$,
called the {\it universal weight function}, see~\cite{SV}.

A basis of $V^{\otimes n}$ is formed by the vectors
\vvn.1>
\be
e_J\>v\,=\,e_{j_1,1}\>v_+\otimes \dots\otimes e_{j_n,1}\>v_+\,,
\vv.2>
\ee
where $J=(j_1,\dots,j_n)$ and $1\leq j_s\leq N$ for $s=1,\dots,N$. A basis
of $(V^{\otimes n})_{\bs\la}$ is formed by the vectors $e_J\>v$ such that
$\#\{s\ |\ j_s>i\}\,=\,l_i$ for every $i=1,\dots,N-1$.
Such a $J$ will be called $\bs l$-admissible.

The universal weight function has the form
\vvn.3>
\be
\omega(\bs t)\,=\,\sum_J\,\omega_J(\bs t)\,e_Jv
\ee
where the sum is over the set of all $\bs l$-admissible $J$,
and the function $\omega_J(\bs t)$ is defined below.

\sskip
For an admissible $J$, \>define \,$S(J)=\{s\ |\ j_s>1 \}$\>,
and for \>$i=1,\ldots,N-1$, define
\vvn.3>
\be
S_i(J)\,=\,\{\,s\ |\ 1\le s\le n\,,\ \ 1\le i<j_s\,\}\,.
\vv-.2>
\ee
Then $|\>S_i(J)\>|\,=\,l_i$.

\sskip
Let $B(J)$ be the set of sequences \,$\bs\beta=(\beta_1,\dots,\beta_{N-1})$
of bijections \,$\beta_i:S_i(J)\to\{1,\dots,l_i\}$, \>$i=1,\dots,N-1$.
Then $|\>B(J)\>|\>=\prod_{a=1}^{N-1}l_a!$~.

\sskip
For $s\in S(J)$ and $\bs\beta\in B(J)$, introduce the rational function
\be
\omega_{s,\bs\beta}(\bs t)\,=\,\frac1{t^{(1)}_{\beta_1(s)}-t^{(0)}_s}\;
\prod_{i=2}^{j_1-1}\frac1{t^{(i)}_{\beta_i(s)}-t^{(i-1)}_{\beta_{i-1}(s)}}\
\ee
and define
\be
\omega_J(\bs t)\,=\,
\sum_{\bs\beta\in B(J)}\,\prod_{s\in S(J)}\,\omega_{s,\bs\beta} \ .
\ee

\begin{example}
Let $n=2$ \,and \,$\bs l=(1,1,0,\dots,0)$. Then
\vvn.5>
\be
\omega(\bs t)\,=\,\frac 1{(t_1^{(2)}-t_1^{(1)})\>
(t_1^{(1)}-t_1^{(0)})}\ e_{3,1}v_+\otimes v_+\,+\;
\frac 1{(t_1^{(2)}-t_1^{(1)})\>(t_1^{(1)}-t_2^{(0)})}\ v_+\otimes e_{3,1}v_+\;.
\vv.5>
\ee
\end{example}

\begin{thm}
\label{thm X to Vn}
Let $Y\in \Omlb$ be a generic point with root coordinates $\bs t_Y$.
Consider the value $\omega(\bs t_Y)$ of the universal weight function
$\omega:\C^{l+n}\to(V^{\otimes n})_{\bs\la}$ at\/ $\bs t_Y$. Consider
$V^{\otimes n}$ as the $\glNt$-module $\otimes_{s=1}^nV(t_s^{(0)})$\>.
Consider the Bethe algebra $\B_Y\subset \Uglnt$.
Then
the vector $\omega(\bs t_Y)$ is an eigenvector of the Bethe algebra
$\B_Y$, acting on $\otimes_{s=1}^nV(t_s^{(0)})$. Moreover,
$\D^{\mc B_Y}_{\omega(\bs t_Y)}=\D_Y$, where $\D^{\mc B_Y}_{\omega(\bs t_Y)}$
and $\D_Y$ are the differential operators associated with the eigenvector
$\omega(\bs t_Y)$ and the point $Y\in\Omlb$, respectively.

\end{thm}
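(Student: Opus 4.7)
The plan is to apply the Schechtman--Varchenko Bethe ansatz construction \cite{SV} in the deformed setting where the Bethe algebra $\B_{\bs K}$ depends on $\bs K=(K_1,\dots,K_N)$. The argument splits into two logically distinct steps: first, show that $\omega(\bs t_Y)$ is a joint eigenvector of $\B_Y$ on $\otimes_{s=1}^n V(t_s^{(0)})$; second, identify the corresponding scalar differential operator with $\D_Y$.

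For the first step, since $Y$ is generic, its root coordinates $\bs t_Y$ satisfy the Bethe ansatz equations \eqref{BAE Gaudin} with parameters $K_a=k_a(Y)$ and $l_a$ as in \eqref{sequence l}. The universal weight function $\omega(\bs t)$ is defined precisely so that any solution $\bs t$ of these equations produces a joint eigenvector of the Bethe algebra $\B_{\bs K}$ on the evaluation module $\otimes_{s=1}^n V(t_s^{(0)})$. In the deformed form required here, this is the content of the Bethe ansatz argument in Section~10 of \cite{MTV4}; the nonvanishing of $\omega(\bs t_Y)$ on the generic locus follows from the leading terms in the explicit combinatorial formula for $\omega$, together with genericity of $\bs t_Y$.

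For the second step, a direct computation (cf.\ \cite{MTV1} and Section~10 of \cite{MTV4}) shows that on the Bethe vector $\omega(\bs t)$ the universal differential operator $\D^{\B_Y}$ acts as scalar multiplication by the ordered product
\be
(\der-\chi^1(u,\bs t))(\der-\chi^2(u,\bs t))\cdots(\der-\chi^N(u,\bs t)).
\ee
Substituting $\bs t=\bs t_Y$ and comparing with the definition of $\D_Y$ given just before Lemma~\ref{lem on generic pts} yields $\D^{\B_Y}_{\omega(\bs t_Y)}=\D_Y$. This in particular reproves that $\omega(\bs t_Y)$ is an eigenvector of every coefficient of $\D^{\B_Y}$, because these coefficients can be extracted from the above factorized scalar operator.

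The main obstacle is not a conceptual one but rather a careful bookkeeping of conventions: the deformed operator $\D^\B$ carries shifts $-K_a$ in its diagonal entries, and these must match the additive $K_a$ appearing in $\chi^a(u,\bs t)$ in order that the factored product correctly reproduces the definition of $\D_Y$. Once this translation is in place, the underlying Bethe ansatz principle---that solutions of \eqref{BAE Gaudin} give joint eigenvectors with eigenvalues factoring through a product of first-order scalar operators---is standard and available from \cite{SV}, \cite{MTV1}, and \cite{MTV4}; the remainder of the work is to import those results into the notation of the present paper.
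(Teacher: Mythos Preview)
Your proposal is correct and aligns with the paper's own treatment: the paper does not give an independent proof of this theorem but simply states that it is proved in \cite{MTV1}. You have supplied an expanded sketch of precisely that argument, citing \cite{MTV1} and \cite{MTV4} for the eigenvector property and the factorized eigenvalue formula, which is exactly the route the paper defers to.
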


The theorem is proved in \cite{MTV1}.

\subsection{Epimorphism $F_{\bs\la}$}
\label{Construction of B hom}
Let $\bs\la^{(1)},\dots,\bs\la^{(k)},\>\bs\la$ \,be partitions with at most $N$
parts such that $|\bs\la|=\sum_{s=1}^k|\bs\la^{(s)}|$, and $b_1,\dots,b_k$
distinct complex numbers. Denote \,$n=|\bs\la|$ \,and \,$n_s=\>|\bs\la^{(s)}|$,
\,$s=1,\dots,k$.

For $s=1,\dots,k$, \,let \,$F_s:V^{\otimes n_s}\to L_{\bs\la^{(s)}}$
be an epimorphism of $\gln$-modules. Then
\beq
\label{FF}
F_1\otimes\dots\otimes F_k:\otimes_{s=1}^kV(b_s)^{\otimes n_s}\,\to\,
\otimes_{s=1}^k L_{\bs\la^{(s)}}(b_s)
\eeq
is an epimorphism of $\glNt$-module, which induces an epimorphism
of $\B_Y$-modules
\bea
\label{B homo}
F : (\otimes_{s=1}^k V(b_s)^{\otimes n_s})_{\bs\la}\,\to\,
(\otimes_{s=1}^k L_{\bs\la^{(s)}}(b_s))_{\bs\la}\,,
\eea
for any $Y$ with distinct coordinates $k_1(Y),\dots,k_N(Y)$.

\subsection{Construction of an eigenvector from a differential operator}
\label{Main result}
Let $\D^0$ be an element of $\Dlb$. Let $Y^0$ be the kernel of $\D^0$.
Then $Y^0$ is a point of the cell $\Omlb$ and $K_i=k_i(Y^0)$
for all $i$. In particular, we have $\B_{\bs K}=\B_{Y^0}$.

\medskip
 Choose a germ of an algebraic curve
$Y(\ep)$ in $\Omlb$ such that $Y(0)=Y^0$ and $Y(\ep)$ are generic points of
$\Omlb$ for all nonzero $\ep$. Let $\bs t(\ep)$ be the root coordinates of
$Y(\ep)$. The algebraic functions $t_1^{(0)}(\ep),\dots,t_n^{(0)}(\ep)$ are
determined up to permutation. Order them in such a way that the first $n_1$ of
them tend to $b_1$ as $\ep\to 0$, the next $n_2$ coordinates tend to $b_2$, and
so on until the last $n_k$ coordinates tend to $b_k$.

For every nonzero $\ep$, the vector $v(\ep)=\omega(\bs t(\ep))$ belongs to
$(V^{\otimes n})_{\bs\la}$. This vector is an eigenvector of the Bethe
algebra $\B_{Y(\ep)}$, acting on $(\otimes_{s=1}^nV(t_s^{(0)}(\ep)))_{\bs\la}$,
and we have $\D^{\mc B_{Y(\ep)}}_{v(\ep)}=\D_{Y(\ep)}$, 
see Theorem~\ref{thm X to Vn}.

The vector $v(\ep)$ algebraically depends on $\ep$.
Let $v(\ep)=v_0\>\ep^{a_0}+v_1\>\ep^{a_1}+\dots{}$ be its Puiseux expansion,
where $v_0$ is the leading nonzero coefficient.

\begin{thm}
\label{main thm}
For a generic choice of the maps $F_1,\dots,F_k$, the vector\/ $F(v_0)$ is
nonzero. Moreover, $F(v_0)$ is an eigenvector of the Bethe algebra\/ \>$\B_{\bs K}$,
acting on $(\otimes_{s=1}^kL_{\bs\la^{(s)}}(b_s))_{\bs\la}$, and\/
$\D^{\B_{\bs K}}_{F(v_0)}=\D^0$.
\end{thm}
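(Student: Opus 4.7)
The plan is to take the leading order of the eigenvalue equation for $v(\ep)$ as $\ep\to 0$, identify $v_0$ as a joint eigenvector of $\B_{\bs K}$ in a well-defined limit module, and then exploit genericity of $F$ to guarantee nonvanishing. First I would note that the generators $B_{ij}$ of $\B_{\bs K}$, regarded as elements of $U(\glnt)$, act on an evaluation module $\otimes_s V(z_s)$ as polynomial expressions in $z_s$, $K_i$, and the standard endomorphisms $e_{ab}^{(s)}$ on the factors of $V^{\otimes n}$. Hence the family of operators $B_{ij}(\ep)\in\End(V^{\otimes n})$ built from the evaluation parameters $t_s^{(0)}(\ep)$ and from $\bs K(\ep)=(k_1(Y(\ep)),\dots,k_N(Y(\ep)))$ depends algebraically on $\ep$, and its limit as $\ep\to 0$ equals the action of $B_{ij}^{\bs K}$ on the tensor product of evaluation modules $\otimes_{s=1}^k V(b_s)^{\otimes n_s}$ (with the first $n_1$ factors at $b_1$, the next $n_2$ at $b_2$, and so on, according to the ordering of the $t_s^{(0)}(\ep)$). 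By Theorem~\ref{thm X to Vn}, for $\ep\ne 0$ we have $B_{ij}(\ep)\,v(\ep)=\lambda_{ij}(\ep)\,v(\ep)$ with $\lambda_{ij}(\ep)$ algebraic in $\ep$ and tending to the corresponding coefficient of $h_i^{\D^0}(u)$. Substituting the Puiseux expansion $v(\ep)=v_0\,\ep^{a_0}+\dots$ and reading off the leading term gives $B_{ij}^{\bs K}\,v_0=\lambda_{ij}(0)\,v_0$, so $v_0$ is a nonzero joint eigenvector of $\B_{\bs K}$ on $\otimes_{s=1}^k V(b_s)^{\otimes n_s}$ with eigenvalues determined by $\D^0$.

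Next I would locate the $\gln$-isotypic component of $\otimes_{s=1}^k V(b_s)^{\otimes n_s}$ containing $v_0$. As a $\glnt$-module this space decomposes as $\bigoplus_{\bs M}\bigl(\otimes_{s=1}^k L_{\bs\mu^{(s)}}(b_s)\bigr)\otimes M_{\bs M}$, where $\bs M=(\bs\mu^{(1)},\dots,\bs\mu^{(k)})$ runs over sequences of partitions with $|\bs\mu^{(s)}|=n_s$ and $M_{\bs M}$ is the corresponding multiplicity space. By Lemma~\ref{Apol} applied summand by summand, the operator obtained from $B_i(u)\prod_r(u-b_r)^i$ evaluated at $u=b_s$ is the scalar on the $\bs M$-summand encoded by $\prod_l(\al-\mu^{(s)}_l-N+l)$. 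Since $\D^0\in\Dlb$ has indicial polynomial $\prod_l(\al-\la^{(s)}_l-N+l)$ at $b_s$ by Lemma~\ref{lem on intersection}, and since the strictly decreasing sequence $\mu^{(s)}_l+N-l$ recovers the partition $\bs\mu^{(s)}$, the eigenvalues of these operators on $v_0$ match the $\bs M=\bs\La$ summand and no other, so $v_0\in\bigl(\otimes_s L_{\bs\la^{(s)}}(b_s)\bigr)\otimes M_{\bs\La}$.

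Finally, each $F_s:V^{\otimes n_s}\to L_{\bs\la^{(s)}}$ factors as the projection onto the $L_{\bs\la^{(s)}}$-isotypic component of $V^{\otimes n_s}$ followed by contraction with a nonzero functional $\phi_s$ on the multiplicity space $(V^{\otimes n_s})^{sing}_{\bs\la^{(s)}}$. Restricted to the $\bs\La$-isotypic component, $F=F_1\otimes\dots\otimes F_k$ is the identity on $\otimes_s L_{\bs\la^{(s)}}(b_s)$ tensored with $\phi_1\otimes\dots\otimes\phi_k$. Writing $v_0=\sum_i l_i\otimes m_i$ with the $l_i$ linearly independent (and hence all $m_i\ne 0$), the locus $F(v_0)=0$ is a proper algebraic subset of the parameter space for $(\phi_1,\dots,\phi_k)$, so $F(v_0)\ne 0$ for generic $F$. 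Since each $F_s$ is $\gln$-equivariant with $L_{\bs\la^{(s)}}$ placed at $b_s$, the map $F$ intertwines the $\B_{\bs K}$-action, so $F(v_0)$ inherits the eigenvalues of $v_0$, giving $\D^{\B_{\bs K}}_{F(v_0)}=\D^0$. The delicate point is the isotypic separation in the second step, which relies on the scalars produced from $B_i(u)\prod_r(u-b_r)^i$ at $u=b_s$ genuinely distinguishing the summand labels $\bs M$; once this separation is in place, the Puiseux limit and the generic-$F$ argument are routine.
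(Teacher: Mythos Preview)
Your proof is correct and follows the same approach as the paper's: pass to the Puiseux limit to obtain $v_0$ as a $\B_{\bs K}$-eigenvector in $(\otimes_s V(b_s)^{\otimes n_s})_{\bs\la}$, identify its isotypic component as the $\bs\La$-summand via the indicial data of $\D^0$, and conclude by genericity of $F$ and $\glnt$-equivariance. Your version is more explicit than the paper's in the isotypic separation step (invoking Lemma~\ref{Apol} directly and noting that the strictly decreasing sequence $\mu^{(s)}_l+N-l$ recovers $\bs\mu^{(s)}$) and in the genericity argument (via functionals on multiplicity spaces), but the underlying logic is identical.
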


\begin{proof}
For any generator $B_{ij}\in\B_{Y(\ep)}$, 
the action of $B_{ij}$ on the $\Uglnt$-module
$\otimes_{s=1}^nV(t^{(0)}_s(\ep))$ determines an element of $\End(V^{\otimes n})$,
algebraically depending on $\ep$. 
Since for every nonzero $\ep$,
the vector $v(\ep)$ is an eigenvector of $\B_{Y(\ep)}$, acting on
$(\otimes_{s=1}^nV(t_s^{(0)}(\ep)))_{\bs\la}$, and
$\D^{\mc B}_{v(\ep)}=\D_{Y(\ep)}$, the vector $v_0$ is an eigenvector of 
$\B_{Y(0)}=\B_{\bs K}$,
acting on $(\otimes_{s=1}^kV(b_s)^{\otimes n_s})_{\bs\la}$, and
$\D^{\mc B_{\bs K}}_{v_0}=\D^0$.

The $\glnt$-module $\otimes_{s=1}^kV(b_s)^{\otimes n_s}$ is a direct sum of
irreducible $\glnt$-modules of the form $\otimes_{s=1}^k L_{\bs\mu^{(s)}}(b_s)$,
where $|\bs\mu^{(s)}|=n_s$, \,$s=1,\dots,k$. Since $\D^0\in\Dlb$,
the vector $v_0$ belongs to the component of type
$\otimes_{s=1}^kL_{\bs\la^{(s)}}(b_s)$. Therefore, for generic choice
of the maps $F_1,\dots,F_k$, the vector\/ $F(v_0)$ is nonzero.

Since the map $F_1\otimes\dots\otimes F_k$, see~\Ref{FF},
is a homomorphism of $\glnt$-modules, the vector $F(v_0)$ is
an eigenvector of the Bethe algebra \>$\B_{\bs K}$, acting on
$(\otimes_{s=1}^kL_{\bs\la^{(s)}}(b_s))_{\bs\la}$, and
\>$\D^{\B_{\bs K}}_{F(v_0)}=\D^0$.
\end{proof}

Given $\D\in\Dlb$\>, denote by \,$w(\D)$ the vector
\>$F(v_0)\in(\otimes_{s=1}^kL_{\bs\la^{(s)}}(b_s))_{\bs\la}$ constructed
from $\D$ in Section~\ref{Main result}. The vector $w(\D)$ is defined up to
multiplication by a nonzero number. The assignment \>$\D\mapsto w(\D)$ gives
the correspondence, which is inverse to the correspondence \>$v\mapsto\D^\B_v$
\,in part (v) of Theorem~\ref{BL}.

\subsection{Completeness of Bethe ansatz for $\glN$ Gaudin model}
\label{Sec Bethe ansatz}

The construction of the vector
$w(\D)\in(\otimes_{s=1}^kL_{\bs\la^{(s)}}(b_s))_{\bs\la}$ from
a differential operator $\D\in\Dlb$ can be viewed as a (generalized)
Bethe ansatz construction for the $\glN$ Gaudin model, cf.~the Bethe ansatz
constructions in~\cite{Ba}, \cite{RV}, \cite{MV1}, \cite{MV2}.

\begin{thm}
\label{cor 1}
If\/ $b_1,\dots,b_k$ are distinct real numbers and $ K_1,\dots,K_N$ 
are distinct real numbers, then the collection of vectors
\vvn.4>
\be
\{\>w(\D)\in (\otimes_{s=1}^kL_{\bs\la^{(s)}}(b_s))_{\bs\la}
\ |\ \D\in\Dlb\>\}
\vv.4>
\ee
is an eigenbasis of the action of the Bethe algebra $\B_{\bs K}$.
\end{thm}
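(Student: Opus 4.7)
The plan is to combine the counting result in Corollary~\ref{real} with the Bethe ansatz construction of Section~\ref{Main result} to exhibit $\dim(\otimes_{s=1}^kL_{\bs\la^{(s)}})_{\bs\la}$ linearly independent simultaneous eigenvectors of $\B_{\bs K}$. Since the Bethe algebra is diagonalizable under our hypotheses and each of its eigenspaces is one-dimensional, producing this many eigenvectors, pairwise lying in distinct eigenspaces, automatically yields an eigenbasis.

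First I would invoke Corollary~\ref{real}: since $K_1,\dots,K_N$ and $b_1,\dots,b_k$ are real and distinct, the set $\Dlb$ consists of exactly $d:=\dim(\otimes_{s=1}^kL_{\bs\la^{(s)}})_{\bs\la}$ distinct points and the intersection $\OmLb$ is transversal. By Corollary~\ref{cor 2}, the action of $\B_{\bs K}$ on $(\otimes_{s=1}^kL_{\bs\la^{(s)}}(b_s))_{\bs\la}$ is diagonalizable, and by Theorem~\ref{BL}(iii) each eigenspace is one-dimensional. Hence there are exactly $d$ eigenspaces, and any $d$ nonzero eigenvectors sitting in pairwise distinct eigenspaces form an eigenbasis.

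Next I would show that the assignment $\D\mapsto w(\D)$ produces such eigenvectors. For each $\D\in\Dlb$ the construction of Section~\ref{Main result} is well defined up to a nonzero scalar (as noted in the remark following Theorem~\ref{main thm}), and by Theorem~\ref{main thm} the vector $w(\D)\in(\otimes_{s=1}^kL_{\bs\la^{(s)}}(b_s))_{\bs\la}$ is a nonzero eigenvector of $\B_{\bs K}$ with $\D^{\B_{\bs K}}_{w(\D)}=\D$. In particular, the eigenvalues by which $\B_{\bs K}$ acts on $w(\D)$ are read off from the coefficients $h_i^\D(u)$ of $\D$; hence two distinct operators $\D,\D'\in\Dlb$ give eigenvectors $w(\D),w(\D')$ corresponding to distinct characters of $\B_{\bs K}$, and thus sitting in different (one-dimensional) eigenspaces.

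Therefore the collection $\{w(\D)\mid\D\in\Dlb\}$ consists of $d$ nonzero vectors in pairwise distinct one-dimensional eigenspaces of $\B_{\bs K}$; by the diagonalizability established in the first step, this collection is an eigenbasis of $(\otimes_{s=1}^kL_{\bs\la^{(s)}}(b_s))_{\bs\la}$. The only mildly delicate point is that Theorem~\ref{main thm} is phrased for a generic choice of the epimorphisms $F_1,\dots,F_k$, so to make the statement genuinely produce $d$ vectors one should observe (as the remark after Theorem~\ref{main thm} records) that for each fixed $\D$ a suitable choice of $F_s$'s makes $F(v_0)$ nonzero, and that the resulting line in the eigenspace labeled by $\D$ is independent of that choice; with this in hand, the bijection $\D\leftrightarrow w(\D)$ inverts the correspondence of Theorem~\ref{BL}(v), and the proof is complete.
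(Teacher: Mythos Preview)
Your proof is correct and follows essentially the same route as the paper, which simply states that the theorem follows from Theorem~\ref{BL} and Corollaries~\ref{card} and~\ref{real}. You have unpacked this one-line argument in more detail, making explicit the role of Theorem~\ref{main thm} in guaranteeing that each $w(\D)$ is a nonzero eigenvector with the correct associated differential operator, and you have also addressed the genericity of the maps $F_s$ via the remark following Theorem~\ref{main thm}; the only cosmetic difference is that you invoke Corollary~\ref{cor 2} rather than Corollary~\ref{card}, but the two are interchangeable here.
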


The theorem follows from Theorem \ref{BL} and Corollaries \ref{card} 
and 
\ref{real} .

\end{document}